\newtheorem{thm}{Theorem}[section]
\newtheorem{lem}[thm]{Lemma}
\newtheorem{prop}[thm]{Proposition}
\theoremstyle{remark}
\newtheorem{rem}[thm]{Remark}
\newtheorem{defn}[thm]{Definition}
\theoremstyle{definition}
\newcommand{\lp}[2]{\Vert \, #1 \, \Vert_{#2}}
\newcommand{\slp}[2]{\Vert   #1  \Vert_{#2}}
\newcommand{\llp}[2]{ |\!|\!| #1 |\!|\!|_{#2}}
\newcommand{\sech}{\hbox{sech}}
\newcommand{\td}[1]{\widetilde{#1}}
\newcommand{\case}[2]{\noindent\textbf{Case #1:}(\emph{#2})}
\newcommand{\step}[2]{\noindent\textbf{Step #1:}(\emph{#2})}
\begin{document}

\title[Variable Cubic Klein-Gordon Scattering]
{Dispersive Decay for the 1D Klein-Gordon Equation with Variable
Coefficient Nonlinearities}
\author{Jacob\ Sterbenz}
% Address of record for the research reported here
\address{Department of Mathematics, University of California San
  Diego, La Jolla, CA 92093-0112} \email{jsterben@math.ucsd.edu}
\address{}
\thanks{The author was supported in part by NSF grant DMS-1001675.}
\subjclass{}
\keywords{}
\date{}
\dedicatory{}
\commby{}

%%% ----------------------------------------------------------------------

\begin{abstract}
We study the 1D  Klein-Gordon equation with variable coefficient 
nonlinearity. This problem exhibits an interesting resonant
interaction between the spatial frequencies of the nonlinear
coefficients and the temporal oscillations of the solutions. In the
case when only the cubic coefficients are variable we prove
$L^\infty$ scattering and   smoothness of the solution in 
weighted spaces with the help of  both quadratic and cubic
normal forms transformations. In the case of cubic interactions
these  normal forms appear to be novel.
\end{abstract}

%%% ----------------------------------------------------------------------
\maketitle
\tableofcontents
%%% ----------------------------------------------------------------------
%%%%%%%%%%%%%%%%%%%%%%%%%%%%%%%%%-----------------------------------------------------------------------

%-------------------------------------------------------------------------
%%%%%%%%%%%%%%%%%%%%%%%%%%%%%% 
%-------------------------------------------------------------------------

\section{Introduction}

In this paper we initiate the study of 1D Klein-Gordon equations with variable
coefficient nonlinearities. Specifically we investigate small compactly supported
solutions to the problem:
\begin{equation}
		(\Box+1)\phi \ = \ \alpha_0 \phi^2 + \beta(x) \phi^3 \ , \qquad \Box \ = \ \partial_t^2 -\partial_x^2
		\ , \label{KG}
\end{equation}
where $\beta(x)=\beta_0 + \beta_1(x)$, with $\beta_1$ is a real valued
Schwartz function, and $\alpha_0,\beta_0 \in \mathbb{R}$.
To motivate this  recall the two well known one dimensional nonlinear
Klein-Gordon equations:
\begin{align}
		\Box \phi  -2\phi+2\phi^3\ &= \ 0 \ , &\hbox{($\phi^4$ model)} \notag\\
		\Box  u  +\sin(u) \ &= \ 0 \ .
		&\hbox{(Sin-Gordon equation)} \notag
\end{align} 
Both of these equations have static ``kink'' type solutions which are respectively (see \cite{Man-S}):
\begin{equation}
		\phi_0 \ = \ \tanh(x) \ , \qquad
		u_0 \ = \ 4 \arctan(e^x) \ . \notag
\end{equation}
Linearization of their respective equations around these solutions leads to 
nonlinear Klein-Gordon equations of the form \eqref{KG}, albeit with potentials
and nonlinear quadratic coefficients as well. The asymptotic stability problem
of small solutions to such generalizations of \eqref{KG} appears to be quite
difficult,
so in the present work we focus attention on the more modest goal of
 understanding the simplified model equation \eqref{KG}. Our main result
 here is the following:

\begin{thm}\label{main_thm}
Let $\phi(t,x)$ be the global solution to the equation \eqref{KG} with sufficiently 
small and smooth compactly supported 
initial data $\phi(0)=\phi_0$ and $\phi_t(0)=\phi_1$.
Then for $t\geqslant 0$ the solution $\phi(t,x)$ obeys the   $L^\infty$ estimate:
\begin{equation}
    |\phi(t,x)| \ \lesssim \ \frac{C(\phi_0,{\phi}_1)}{(1+ |\rho|)^\frac{1}{2}}
    ,\qquad \rho=|t^2-x^2|^{1/2}
    \ . \label{main_decay_est}
\end{equation}
\end{thm}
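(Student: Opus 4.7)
The proof combines the vector field / Klainerman--Sobolev method with a two-stage normal form reduction (quadratic and then cubic) adapted to the convolution structure introduced by $\beta_1(x)$. Let $L = x\partial_t + t\partial_x$ be the Lorentz boost, which commutes with $\Box + 1$. Finite speed of propagation confines $\phi$ to the forward cone $|x| \leqslant t + R$, and compactness of the support of the data controls $L^k\phi(0,\cdot)$ in Sobolev norms. Working in the hyperbolic coordinates $(\rho,\theta)$ with $t = \rho\cosh\theta$ and $x = \rho\sinh\theta$, the estimate \eqref{main_decay_est} will follow from uniform-in-time control of $\|\langle L\rangle^N \phi(t)\|_{H^1}$ for sufficiently many commutators $N$, coupled with an $L^\infty$ bootstrap $\|\phi(t)\|_{L^\infty} \lesssim \varepsilon(1+\rho)^{-1/2}$ propagated by the standard Klein--Gordon dispersive estimate.

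The quadratic nonlinearity $\alpha_0\phi^2$ is non-resonant since the phase $\langle\xi\rangle \pm \langle\eta\rangle \pm \langle\xi\mp\eta\rangle$ is bounded below uniformly, so a Shatah-type bilinear substitution $\phi \mapsto \phi + B(\phi,\phi)$ replaces it by cubic-and-higher contributions with bounded symbol. The genuinely new step is the cubic normal form for $\beta(x)\phi^3$. Passing to the half-wave components $\phi_\pm = \tfrac12(\phi \mp i\langle D\rangle^{-1}\partial_t\phi)$ and Fourier transforming, the variable-coefficient cubic takes the schematic form
\[
    \int \hat\beta(k)\, \hat\phi_{\pm_1}(\xi_1)\,\hat\phi_{\pm_2}(\xi_2)\,\hat\phi_{\pm_3}(\xi_3)\, e^{it\Phi}\, d\xi_1\,d\xi_2\,d\xi_3\,dk,
\]
with phase $\Phi = \langle\xi\rangle \pm_1 \langle\xi_1\rangle \pm_2 \langle\xi_2\rangle \pm_3 \langle\xi_3\rangle$ and constraint $\xi_1+\xi_2+\xi_3+k = \xi$. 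Each interaction is split into its part near the space-time resonant set $\{\Phi = 0\}$ and the complementary non-resonant part; the latter is removed by dividing by $\Phi$ and integrating by parts in $t$, producing a quintic error and a modified equation.

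What remains after both normal forms is a purely resonant cubic interaction which, after localization to stationary phase in $(\xi_1,\xi_2,\xi_3)$, generates for the Fourier profile $f(t,\xi) := e^{it\langle\xi\rangle}\widehat{\phi_+}(t,\xi)$ an evolution of the form
\[
    \partial_t f(t,\xi) \ = \ \frac{i\,C(\xi)}{t}\,|f(t,\xi)|^2 f(t,\xi) + R(t,\xi), \qquad C(\xi) \in \mathbb{R},
\]
with $R$ integrable in time in $L^\infty_\xi$. Reality of $C(\xi)$, forced by the self-adjoint character of the resonant trilinear symbol, implies that $|f(t,\xi)|$ stays uniformly bounded, and the dispersive lemma then upgrades this into the $\rho^{-1/2}$ pointwise estimate, closing the bootstrap.

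The principal obstacle lies in the cubic normal form step. In the constant-coefficient case ($\beta_1 \equiv 0$) the resonant locus for $\phi^3$ is a fixed affine subspace and the non-resonant contribution is handled by classical trilinear estimates. With $\beta_1 \not\equiv 0$ the resonant locus depends on the convolution variable $k$, so the normal form must be constructed at the level of the full $(k,\xi_1,\xi_2,\xi_3)$ symbol, and one must show that the resulting trilinear operator, carrying both the denominator $\Phi^{-1}$ and the rapid decay of $\hat\beta_1(k)$, is bounded on the $L$-weighted norms without loss of regularity. A related subtlety is that $L$ does not commute with multiplication by $\beta_1(x)$: the commutator $[L,\beta_1] = t\beta_1'(x)$ carries a growing factor of $t$, but the Schwartz decay of $\beta_1'$ localizes this term to the region $|x| \ll t$ where $\rho \sim t$ and the linear decay absorbs the $t$ loss. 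Managing these two effects uniformly is the crux of the argument, and is what singles out the new class of cubic normal forms alluded to in the abstract.
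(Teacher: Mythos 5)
Your proposal pursues a Fourier-profile / space--time-resonance route (decompose into half-waves $\phi_\pm$, divide by the phase $\Phi$, integrate by parts in $t$, localize the residual cubic to stationary phase in $(\xi_1,\xi_2,\xi_3)$, and read off the asymptotic ODE $\partial_t f = iC(\xi)t^{-1}|f|^2f$). The paper does something structurally quite different: it passes to hyperbolic coordinates $(\rho,y)$ where $\partial_y$ is the Lorentz boost, builds a paradifferential calculus in the semi-classical derivative $D_y=\tfrac{1}{i\rho}\partial_y$, and constructs the cubic correction in physical space as $\mathcal{N}_k = \rho^{-1}(f_{1,k}u_{<k}^3 + \cdots + f_{4,k}\dot u_{<k}^3)$, where the coefficients $f_{i,k}$ are obtained by solving an explicit linear system that reduces to the two complexified equations $(\Box_\mathcal{H}+1)K_1 = 3e^{i\rho}\beta_{1,k}$ and $(\Box_\mathcal{H}+1)K_2 = e^{3i\rho}\beta_{1,k}$. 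The $L^\infty$ input to the bootstrap is obtained not from a linear dispersive estimate but from a nonlinear energy identity for the frequency-projected evolution $P_{<c\ln_2\rho}u$, exploiting its near-Hamiltonian ODE structure.

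The concrete gap in your argument concerns precisely the piece the paper advertises as novel. Your "divide by $\Phi$, then stationary phase in $(\xi_1,\xi_2,\xi_3)$" scheme is calibrated to the constant-coefficient case, where resonances are tied to the input frequencies. But the dominant new resonance with $\beta_1\neq 0$ is the high--low interaction $\beta_{\mathrm{high}}u_{\mathrm{low}}^3$: with all $\xi_i\approx 0$ the phase is $\Phi\approx\langle\xi\rangle-3$, which vanishes only on the codimension-one set $\xi=\pm\sqrt 8$, i.e.\ at a special frequency of $\hat\beta_1$, with no stationarity in $(\xi_1,\xi_2,\xi_3)$ at all. Your stationary-phase localization in the input variables simply does not see this time resonance, and your "purely resonant" cubic profile ODE $\partial_t f = iC(\xi)t^{-1}|f|^2f$ (local in $\xi$, with $C$ real) is no longer the correct model once the convolution with $\hat\beta_1(k)$ is present: the residual evolution is nonlocal in $\xi$ and the algebraic cancellation giving $\partial_t|f|^2=0$ is not available. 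The paper resolves this by solving the forced equation $(\Box_\mathcal{H}+1)K_2 = e^{3i\rho}\beta_1$ asymptotically, which on the critical "dispersive" time scale $\rho\approx 2^k$ requires the space-time stationary phase estimate of Lemma \ref{SP_lem}. That construction, and not a $\Phi^{-1}$ division, is the novel cubic normal form. Until you supply an analogue of it your bootstrap cannot close at the low-frequency/high-$\beta$-frequency resonance.
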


Notice that solutions the scalar equation \eqref{KG} enjoy the conserved
energy:
\begin{equation}
		E(t) \ = \
		\frac{1}{2}\int_\mathbb{R}
		(\phi_t^2+\phi_x^2+\phi^2 - \frac{2}{3}\alpha_0\phi^3 - \frac{1}{2}\beta \phi^4)(t)dx \ , \notag
\end{equation}
so   global existence from small energy class initial data is not an issue.
On the other hand the dispersion rate for solutions to the 1D linear homogeneous Klein-Gordon
equation $(\Box+1)\phi=0$ is only $t^{-\frac{1}{2}}$, placing the problem \eqref{KG}
well out of reach from the point of view of $L^1\! -\! L^\infty$ type dispersive (or Strichartz) estimates.
Moreover, even using the strongest decay estimates one can muster based on a combination
of the vector-field
method of Klainerman \cite{K1} and  normal forms method of Shatah \cite{Sh}
the problem \eqref{KG}
is still out of reach unless one takes into account the long range behavior  
of the cubic nonlinearity.   In fact a deep result of Delort
\cite{D1} shows that  the global  solutions to \eqref{KG} with constant coefficients
have the asymptotic behavior:
\begin{equation}
 	\phi(t,x)\sim \rho^{-1/2} e^{i\psi(\rho,\,x/\rho)}\, a(x/\rho)+
 	\rho^{-1/2} e^{-i\psi(\rho, x/\rho)}\, \overline{a(x/\rho)} \ , \label{delorts}
 \end{equation}
 were  the phase is not linear but instead given by the expression:
 \begin{equation}
 		\psi(\rho,x/\rho) \ = \ 
		\rho-\Big(\frac{5}{3}\alpha_0^2 +\frac{3}{2}\beta_0 \Big)\sqrt{1-(x/\rho)^2}
		 |a(x/\rho)|^2\,\ln{\rho} \ . \notag
 \end{equation}
In particular this shows that any direct attempt to control solutions of \eqref{KG} globally in time
via Duhamel's principle for the linear Klein-Gordon equation must fail.

In the present paper we push beyond the analysis of \cite{D1} to include the case of 
 variable coefficient cubic nonlinearities of the  form explained above. The main difficulty in
this analysis stems from the fact that the vector-field technique, needed to handle 
the (non-localized) constant coefficient nonlinearities involving $\alpha_0$ and $\beta_0$, is to
some extent incompatible with  coefficients depending on the spatial variable. 
To see this
note that  the linear Klein-Gordon equation $\Box + 1$
is not scale invariant and hence there is no vector-field
that preserves free solutions
based on scalings. The only useful weighted derivative that commutes with the linear
flow appears to be the Lorentz boost $\partial_y=t\partial_x + x\partial_t$, and differentiation by it
of coefficients depending on $x$ leads to very badly diverging sources terms.

At first it might seem as if the lack of good vector-fields   
places   \eqref{KG} well out of reach of the classical methods of Klainerman and Shatah.
However, if one pushes the method of Shatah in a new direction to include not only quadratic
but variable
coefficient cubic normal forms, then  \eqref{KG} can be rewritten
in such a way that it is amenable to estimates involving the Lorentz boost. However, in order
to make this analysis work we must  develop a Littlewood-Paley type calculus
as well as bilinear $\Psi$DO operators and cubic paraproducts
in terms of $\partial_y$, instead of the usual translation derivative $\partial_x$.
This circle of ideas is interesting in its own right and we believe it has applications
 problems which are more general than what we consider here.

The method of the present paper owes much to previous work of Lindblad-Soffer
\cite{L-S1} and \cite{L-S2}
which gave a simplified proof of Delort's global existence theorem from \cite{D1}
in the case when $\alpha_0=\beta_1\equiv 0$. This simplified method
bypasses the phase corrections \eqref{delorts} and provides sharp $L^\infty$ estimates
directly in terms of an asymptotic form of the equation \eqref{KG}. In the context of
our work this method is further clarified by writing the asymptotic equation in a paradifferential
from with respect to $\partial_y$ based frequency cutoffs, in which case the $L^\infty$ estimates
(and even the phase corrections if one wishes) become particularly transparent.\\

The remainder of our paper is organized as follows: In the next section we set 
up hyperboloid coordinates so that $\partial_y$ becomes a coordinate derivative. 
Essentially all of the analysis in the remainder of the paper is carried out in these coordinates.
At this point it is also natural to give a  heuristic overview of the proof before getting into
technical details.

In Section \ref{Norms_sect} we introduce the main function spaces we'll use in our argument, and 
a few of their basic properties. 

In Section \ref{Main_sect} we state the quadratic and cubic normal forms 
transformations we use in an abstract form, as well as some nonlinear energy and $L^\infty$ estimates,
and then use these to derive Theorem \ref{main_thm}. 

In Section \ref{Harm_sect} we set up
all the harmonic analysis used later in the normal forms constructions. This involves a number
of lemmas whose proofs are for the most part independent of the nonlinear problem.

In Section \ref{Quad_sect} we construct and estimate our quadratic 
normal forms transformations, which are defined according 
to the classical method of Shatah \cite{Sh}. 

In Section \ref{Cubic_sect} we 
construct and estimate our variable coefficient
cubic normal forms transformation. 

%-------------------------------------------------------------------------

\subsection{Basic Notation}
We'll use the standard notations $A\lesssim B$, $A\approx B$, $A\gtrsim B$ to denote
$A\leqslant CB$, $C^{-1}B\leqslant B\leqslant CB$, and $CA\geqslant B$ from some
implicit $C>0$. The notation $\langle q\rangle=(1+q^2)^\frac{1}{2} $ will be used
for scalars and operators. We also set $(a)_+=\max\{a,0\}$.

If $N$ is a Banach space with norm $\lp{\cdot}{N}$
and $L$ is an invertible linear operator on $N$ we denote by $L N$  the Banach space
with norm $\lp{u}{LN}=\lp{L^{-1}u}{N}$. More generally if $L:N_1\to N_2$ is continuous then 
we'll often denote this by the inclusion $L N_1\subseteq N_2$, which is consistent
with the previous convention if $L$ is invertible. 

In this work all analysis is done with 
respect to one spatial variable, thus unless otherwise stated $L^p$, $H^1$, etc denote
$L^p(\mathbb{R})$, $H^1(\mathbb{R})$, etc. We denote by $\dot H^1$ the space
with norm $\lp{u}{\dot H^1}=\lp{\partial_x u}{L^2}$, and by $\mathcal{S}=\mathcal{S}(\mathbb{R})$
the one dimensional Schwartz space. We use the convention that for $\varphi\in \mathcal{S}$
a quantity $C(\lp{\varphi}{\mathcal{S}})$ denotes a positive constant depending on finitely many 
Schwartz seminorms of $\varphi$.

%-------------------------------------------------------------------------

\subsection{Acknowledgements}
This work began as a joint project with H. Lindblad,
A. Soffer, and I. Rodnianski. Although all participants made
important contributions to the project, the author assumes full
responsibility for the correctness of the details in this paper.
The author would also like to thank the anonymous referee for
helpful comments.

For further exposition and an alternate presentation  of the technical details,
based on a previous draft of this paper, please refer to \cite{L-S3}.

%-------------------------------------------------------------------------
%%%%%%%%%%%%%%%%%%%%%%%%%%%%%% 
%-------------------------------------------------------------------------

\section{Equations and Coordinates}

Inside the forward light cone $t>|x|$ we set new coordinates:
\begin{equation}
    x \ = \ \rho\sinh(y) \ , \qquad
    t \ =  \ \rho\cosh(y) \ ,
    \qquad\qquad
    \partial_y \ = \ t\partial_x + x\partial_t \ ,
    	\qquad  \partial_\rho \ = \ \rho^{-1}
    	(t\partial_t + x\partial_x) \ . 
     \label{coords}
\end{equation}
In the coordinates \eqref{coords} the Minkowski metric takes the  simple
form $-dt^2 + dx^2 =  - d\rho^2 + \rho^2  dy^2$ 
and therefore, the linear Klein-Gordon operator may be written as:
\begin{equation}
    \Box+1 \ = \ \partial_t^2 - \partial_x^2 + 1  \ = \
    \partial_\rho^2 + \frac{1}{\rho}\partial_\rho - \frac{1}{\rho^2}\partial_y^2
    + 1 \ . \label{eqs}
\end{equation}
Conjugating the RHS above by $\rho^\frac{1}{2}$ we have:
\begin{equation}
		\rho^\frac{1}{2}(\Box+1) \ = \ (\Box_\mathcal{H}+1)\rho^\frac{1}{2} \ ,
		\qquad \Box_\mathcal{H} \ = \ \partial_\rho^2 - \rho^{-2}\partial_y^2 +  
		\frac{1}{4}\rho^{-2} \ . \label{box_conj}
\end{equation}
In particular we may write the equation \eqref{KG} as follows:
\begin{equation}
		(\Box_\mathcal{H}+1)u \ = \ \rho^{-\frac{1}{2}}\alpha_0 u^2 
		+ \rho^{-1}\beta\big(\rho\sinh(y)\big) u^3 \ , \qquad
		\hbox{where\ \ } u \ = \ \rho^\frac{1}{2}\phi \ . 
		\label{KG_hyp}
\end{equation}
Therefore, after translating the initial data  problem forward in time
by a bounded amount, to prove Theorem \ref{main_thm} it suffices to show:

\begin{thm}\label{main_thm_hyp}
Let $u(\rho,y)$ be a global solution to the equation \eqref{KG_hyp} with sufficiently 
small and smooth compactly supported 
initial data $u(1)=u_0$ and $u_\rho(1)=u_1$.
Then for $\rho\geqslant 1$ one has the uniform  $L^\infty$ estimate
$|u|  \lesssim C(u_0,u_1)$.
\end{thm}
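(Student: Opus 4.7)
The plan is to establish Theorem \ref{main_thm_hyp} by a continuity/bootstrap argument in the coordinate $\rho$. Let $\varepsilon$ denote the size of the initial data, and introduce a norm $X_T$ for $u$ on $\rho\in [1,T]$ that simultaneously controls the $L^\infty_y$ norm and finitely many $\partial_y$-commuted weighted $L^2$-energies. Standard local theory supplies $X_{1+\delta}\lesssim \varepsilon$ for small $\delta>0$, so the theorem reduces to showing the bootstrap implication $X_T \le C\varepsilon \Rightarrow X_T \le \tfrac{1}{2} C\varepsilon$ for every $T\ge 1+\delta$. In hyperboloidal coordinates, $\partial_y$ is a coordinate vector field that commutes cleanly with $\Box_\mathcal{H}+1$ modulo $\rho^{-2}$ errors, which is precisely what makes the vector-field method available on \eqref{KG_hyp}.

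The first step is to eliminate the non-localized quadratic term $\rho^{-1/2}\alpha_0 u^2$. Since the mass combination $\pm 1\pm 1\pm 1$ stays away from zero for Klein-Gordon with all masses equal to $1$, this term is non-resonant, and Shatah's quadratic normal form yields an order-zero bilinear operator $B$ such that $v=u-\alpha_0\rho^{-1/2}B(u,u)$ turns the quadratic source into cubic terms, plus commutator errors coming from differentiating the coefficient $\rho^{-1/2}$ and from the lower order part $\tfrac{1}{4}\rho^{-2}-\rho^{-2}\partial_y^2$ of $\Box_\mathcal{H}+1$. These errors gain extra $\rho^{-1}$ factors and hence have integrable $\rho$-decay under the bootstrap assumption.

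The second and essential step is the variable coefficient cubic normal form. The cubic nonlinearity, now including the contributions from the previous step, is a sum of a constant-coefficient $\beta_0 u^3$ piece and the spatially varying $\beta_1(\rho\sinh y) u^3$ piece. I decompose $v$ in $\partial_y$-frequency using the paraproduct framework of Section \ref{Cubic_sect} and split off the fully resonant piece, of the form $\gamma(\rho,y)|v|^2 v$ with $\gamma$ real valued, from the non-resonant pieces. The key construction is a cubic normal form operator that removes the non-resonant pieces modulo errors with improved $\rho$-decay. The novelty is that it must be built intrinsically in $\partial_y$ and must handle the multiplier $\beta_1(\rho\sinh y)$, whose Fourier support in $x$ interacts nontrivially with the Klein-Gordon phase. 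After this transformation the final variable $\tilde v$ obeys an asymptotic equation whose only undecayed nonlinear term is $i\gamma|\tilde v|^2\tilde v$ with real $\gamma$.

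Finally, I would commute the transformed equation with powers of $\partial_y$ and run an energy estimate in $\rho$ to close the energy part of $X_T$; here the fact that $\partial_y \beta_1(\rho\sinh y)=\rho\cosh(y)\beta_1'(\rho\sinh y)$ is Schwartz in $\rho\sinh y$ and therefore concentrated in a shrinking neighborhood of $y=0$ is used crucially. For the pointwise bound I follow the ODE method of Lindblad--Soffer \cite{L-S1,L-S2}: localize $\tilde v$ in $\partial_y$-frequency, use the energy bound plus a Sobolev embedding to slave everything to the profile at frequency zero, and observe that because $\gamma$ is real the modulus $|\tilde v|$ satisfies an ODE with purely imaginary right-hand side, hence is conserved to leading order. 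This closes the $L^\infty$ bootstrap and yields Theorem \ref{main_thm_hyp}. The hard part will be the cubic normal form of Section \ref{Cubic_sect}: the prefactor $\rho^{-1}$ on the variable coefficient cubic is only barely non-integrable, and the underlying paradifferential calculus must be developed with respect to $\partial_y$, which does not commute with multiplication by functions of $x=\rho\sinh y$.
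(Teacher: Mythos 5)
Your proposal follows the paper's strategy closely: hyperboloidal bootstrap, Shatah quadratic normal form, variable-coefficient cubic normal form built in $\partial_y$ with stationary-phase analysis of the $\beta_1$--Klein-Gordon interaction, energy estimate, and an ODE-type pointwise bound. The broad architecture matches the proof via Theorems \ref{nf_thm}, \ref{main_ap_thm}, and Proposition \ref{nonlin_est_prop}.

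A few points of divergence worth noting. First, the paper does not reduce to a complex amplitude equation $i\dot a=i\gamma|a|^2a$ with conserved modulus; instead it keeps the real wave variable, performs a frequency-truncated multiplication by $\partial_\rho w_{<c\ln_2\rho}$, and bounds $\tfrac12(\dot w^2 + w^2)$ by exploiting the almost-Hamiltonian structure of $\ddot w+w=\rho^{-1}\beta w^3+\dots$, including the quartic term $\rho^{-1}b\dot w^2 w$ that the quadratic normal form generates. The two formulations are morally the same, but the quartic term does not fit cleanly into the $|a|^2 a$ picture and requires its own integration by parts (Case 2 of Section \ref{Igor_sect}). Second, and more substantively, the paper's $L^\infty$ estimate \eqref{nonlin_nrg_est2} is run \emph{before} the cubic normal form, directly on the variable-coefficient equation, because the Hamiltonian argument tolerates $a=a_0+a_1(\rho y)$ as long as $|\partial_\rho(\rho^{-1}a_{<c\ln_2\rho+C})|\lesssim\rho^{-2}$; the cubic normal form is invoked only for the $H^1_y$ energy bound (Step 3 of the proof of Theorem \ref{main_ap_thm}), where the logarithmic divergence actually bites. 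Your ordering (cubic NF first, then both energy and $L^\infty$) should also work, but it is not what the paper does. Third, the paper first replaces $\beta_1(\rho\sinh y)$ by $\beta_1(\rho y)$ via Lemma \ref{rhoy_lem} before building the cubic normal form; the difference is an integrable error, and this reduction substantially simplifies the Fourier analysis of the coefficient.

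One thing to sharpen: you assert that the concentration of $\partial_y\beta_1(\rho\sinh y)$ near $y=0$ is ``used crucially'' to close the $\partial_y$-commuted energy estimate. By itself this observation is not enough. Although $\lp{\beta_1'(\rho\sinh y)}{L^2_y}\lesssim\rho^{-1/2}$, the $N(\rho)$ norm demands $\rho^{-1+\delta}$ decay of the $H^1_y$ norm of the source, so a direct application to $\rho^{-1}\beta_1 u^3$ falls short by a full $\rho^{1/2}$. What the concentration actually buys is the dyadic estimate \eqref{P_k_beta} with the rapid decay factor $2^{-N(k-\ln_2\rho)_+}$, which is then fed into the construction and error estimates of the cubic normal form (via stationary phase for the $\pm\sqrt{8}$ resonance and an elliptic inversion for the $0$ resonance). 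The energy bound closes only because the bad part of $\rho^{-1}\beta_1 u^3$ has been removed; the concentration alone does not rescue the naive estimate.

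Finally, your description of the cubic step as ``split off the resonant piece $\gamma(\rho,y)|v|^2 v$ and remove the non-resonant pieces'' is a bit loose. In the paper the cubic normal form removes the entire variable part $\rho^{-1}\beta_1 u^3$ in the relevant frequency range $2^k\gtrsim\rho^{1/2}$; what survives is the genuinely constant-coefficient resonant cubic $\rho^{-1}(\beta_0+2\alpha_0^2)u^3 - \tfrac{8}{3}\rho^{-1}\alpha_0^2\dot u^2 u$ (plus integrable errors). The coefficient is a constant, not a $y$-dependent $\gamma(\rho,y)$. This matters because the real-valuedness needed in the Hamiltonian/modulus-conservation argument is automatic for the constant piece, whereas for a $y$-dependent $\gamma$ one would need to argue additionally that spatial variation generates only integrable errors, which is exactly what the cubic normal form is designed to prevent you from having to do.

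Overall your proposal is essentially correct and in the same spirit as the paper, but the role of the cubic normal form (needed specifically to close the $H^1_y$ bound, not the $L^\infty$ bound) and the precise identity of the surviving resonant term should be stated with more care.
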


%-------------------------------------------------------------------------

\subsection{Heuristic overview of the proof}

A naive approach to proving Theorem \ref{main_thm_hyp} would be to attempt
uniform control in time of the Sobolev norm $\lp{u(\rho)}{H^1_y}$. We see from equation 
\eqref{KG_hyp} this is almost possible if $\alpha_0=0$ and $\beta$ is constant. By Duhamel's
principle the contribution of the non-linear terms to an $L^\infty_\rho(H^1_y)$ bound for the solution
would be $\int_1^T \rho^{-1} \lp{u(\rho)}{H_y^1}^3 d\rho$, which just fails.
A closer inspection shows  this failure is really only a matter of relatively low frequencies
because one only  needs $H^{\frac{1}{2}+\epsilon}_y$ to control $u$ in $L^\infty_y$. Thus, the only 
significant contribution to the Duhamel integral is $\int_1^T \rho^{-1}
 \lp{P_{<c\ln(\rho)}u(\rho)}{H_y^1}^3 d\rho$ where $P_{<c\ln(\rho)}$ denotes projection onto
 spatial frequencies $|\xi| \lesssim \rho^c$.
 To handle this low frequency logarithmic divergence  
  one needs to directly manipulate the equation. By frequency
projecting the evolution one finds:
\begin{equation}
		(\Box_\mathcal{H}+1)P_{<c\ln(\rho)}u \ = \ \rho^{-1}\beta (P_{<c\ln(\rho)}u)^3 +
		\hbox{\{Better Terms\}} \ , 
\end{equation}
where the ``better term'' on the RHS are $O(\rho^{-1-\frac{1}{10}c})$ and hence integrable. 
For small $c$ the equation on the LHS above is essentially an ODE in time 
(thanks to $\rho^{-2}\partial_y^2$), so one may obtain $L^\infty$ estimates directly for
$P_{<c\ln(\rho)}u$ by simply using the almost Hamiltonian structure of $\ddot u + u=\rho^{-1}\beta u^3$.
One just has to deal with small (integrable) errors when bounding $\frac{1}{2}(\dot{u}^2+u^2)(\rho)$.
Combining this $L^\infty_y$ control for low frequencies with a slow growth estimate of the form 
$\lp{u(\rho)}{H^1_y}\lesssim \epsilon \rho^\delta$
guarantees uniform $L^\infty_y$ bounds for  all of $u$,  and this   suffices to stabilize slow $ H^1_y$
growth  via Duhamel's principle and a bootstrapping argument.

Now up the ante to considering \eqref{KG_hyp} with $\alpha_0\neq 0$ but $\beta$ still constant.
The only difference here is that we need to employ a suitable version of Shatah normal forms
to remove the quadratic term. In doing so one produces additional cubic terms of the type 
$\rho^{-1}\alpha_0^2 u\dot{u}^2$, but these are harmless in the nonlinear frequency localized
$L^\infty_y$ estimates of the previous paragraph because one still retains the needed Hamiltonian
structure. The implementation of Shatah normal forms in hyperbolic coordinates is more or less straight
forward, and moreover
has a nice semi-classical flavor due the the rescaling of the $\partial_y$ derivatives in time.

Finally we come to the full problem considered here, which is to allow a local non-constant perturbation
of $\beta$ with respect to the spatial variable. At first it would appear that the method sketched in the
previous paragraphs breaks down completely because one has $x\approx \rho y$ in hyperbolic
coordinates, so differentiation  of $\beta(x)$ by $\partial_y$ leads to very badly diverging terms.
However, after some frequency localizations one can show that the only troublesome 
contribution stems from the interactions $\beta_{high}u_{low}^3$. Asymptotically one can 
replace the low frequency factors with ``plane waves'' $u_{low}\approx a_+e^{ i\rho} + a_-e^{-i\rho}$.
This leads to an  ansatz for the worst contribution from the non-linearity which is roughly
$\mathcal{N}_{cubic}=
(\Box_\mathcal{H}+1)^{-1}(c_\pm \rho^{-1}\beta_{high} e^{\pm i\rho} + 
d_\pm \rho^{-1}\beta_{high} e^{\pm 3 i\rho})$
for various coefficient $c_\pm,d_\pm$.
The nice feature of this equation is that it can be solved more or less explicitly and estimated
via stationary phase calculations. The resulting expression $\mathcal{N}_{cubic}$ can be thought 
of as a 
``cubic normal forms'' correction and is given in terms of trilinear paraproducts applied to $u$.
See formula \eqref{cubic_NF_ansatz} below. This expression carries all the relevant information 
about the resonant interaction between the spacial frequencies of $\beta$ and the time 
oscillations  of the solution $u$. With a little bit of work one can show that the remaining quantity
$w=u-\mathcal{N}_{cubic}$ is amenable to the direct $H^1_y$ energy and non-linear $L^\infty$
estimates mentioned in the previous paragraphs. This allows one to close the proof.

In order to implement the strategy sketched above, we introduce a number of function spaces
in the next Section. The $N$ space is used to bound source terms, and is simply a Duhamel type norm
that allows one to conclude $L^\infty_\rho(H^1_y)$ estimates. We also introduce two solution
spaces $S$ and $\dot{S}$. Since our $L^\infty_\rho(H^1_y)$ estimates necessarily grow in time
(again, due to low frequencies), one must add a sharp (non-growing) $L^\infty_y$ component
to the solution norms.
This latter part cannot be recovered directly in terms of the $N$ norm (at least for 
low frequencies), but is instead
bounded (for low frequencies) by directly integrating the equation as explained above. The    
$\dot{S}$ norm is slightly weaker than the $S$ norm and is used to control $\dot u$. Since this quantity occurs
so many times when computing normal forms expressions, it deserves to have its own function space
which helps to streamline  many of the product estimates which occur in the sequel.

%-------------------------------------------------------------------------

\subsection{A preliminary coefficient estimate}

We end this section with an elementary lemma that lets us write the
nonlinearity of \eqref{KG_hyp} in a more useful form:

\begin{lem}[Coefficient approximation]\label{rhoy_lem}
Let $\varphi\in \mathcal{S}$. Then if we set $F(\rho,y)= \varphi(\rho y)- \varphi(\rho\sinh(y)) $,
one has the estimate:
\begin{equation}
		\lp{\partial_\rho^n (\rho^{-1}\partial_y)^m F}{L^p_y} \ \leqslant  \ 
		C_{n,m}(\lp{\varphi}{\mathcal{S}}) \rho^{-2-\frac{1}{p}} \ . \label{coeff_approx_est}
\end{equation}
\end{lem}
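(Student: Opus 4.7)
The plan is to isolate the source of the $\rho^{-2}$ gain, which is the cancellation $\sinh(y) - y = O(|y|^3)$ near the origin, and to use Schwartz decay of $\varphi$ far from the origin. I would first handle the base case $n=m=0$, and then reduce the general case to essentially the same argument by computing the derivatives explicitly.

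For the base case, I would apply the fundamental theorem of calculus to the path $s\mapsto \varphi(\rho[(1-s)\sinh(y)+s y])$ to write
\begin{equation}
		F(\rho,y) \ = \ \rho\bigl(y-\sinh(y)\bigr)\int_0^1 \varphi'\bigl(\rho[(1-s)\sinh(y)+s y]\bigr)\,ds\ . \notag
\end{equation}
Then I would split the $y$-integration into $|y|\leqslant 1$ and $|y|>1$. In the near region $|y|\leqslant 1$, one has $|\sinh(y)-y|\lesssim |y|^3$, while the convex combination $(1-s)\sinh(y)+s y$ is comparable to $y$ uniformly in $s$, so Schwartz decay of $\varphi'$ gives $|F(\rho,y)| \lesssim \rho |y|^3 \langle \rho y\rangle^{-N}$ for any $N$. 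The substitution $z=\rho y$ then produces $\lp{F}{L^p_{|y|\leqslant 1}}\lesssim \rho^{-2-1/p}$. In the far region $|y|>1$, both $|\rho y|$ and $|\rho \sinh(y)|$ are $\gtrsim \rho$, so Schwartz decay of $\varphi$ alone gives $\lp{F}{L^p_{|y|>1}}\lesssim \rho^{-N}$ for any $N$, which is more than enough.

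For general $n,m$, I would verify inductively that $\partial_\rho^n(\rho^{-1}\partial_y)^m F$ is a finite sum of terms of two types: first, terms of the same structure $p(y)\bigl[\varphi^{(k)}(\rho y) - \varphi^{(k)}(\rho \sinh y)\bigr]$ with a smooth coefficient $p(y)$ and $\varphi$ replaced by one of its derivatives; and second, ``remainder'' terms of the form $q(y)\bigl(y-\sinh(y)\bigr)^j\varphi^{(k)}(\rho \sinh(y))$ for some $j\geqslant 1$. The key observation is that each $\partial_\rho$ creates the combination $y\varphi^{(k)}(\rho y)-\sinh(y)\varphi^{(k)}(\rho \sinh(y))$ which splits as $y[\varphi^{(k)}(\rho y)-\varphi^{(k)}(\rho\sinh(y))] + (y-\sinh(y))\varphi^{(k)}(\rho\sinh(y))$, preserving the structure; the operator $\rho^{-1}\partial_y$ behaves similarly with a $\cosh(y)$ in place of $\sinh(y)$. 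Terms of the first type are handled exactly as in the base case. For terms of the second type, on $|y|\leqslant 1$ one estimates $|y-\sinh(y)|^j\lesssim |y|^{3j}\lesssim \rho^{-3j}|\rho\sinh(y)|^{3j}$ and absorbs the polynomial growth into the Schwartz decay of $\varphi^{(k)}(\rho\sinh(y))$, yielding an even better $L^p_y$-gain of $\rho^{-3j-1/p}$; the far region is again trivial by Schwartz decay.

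The main obstacle is purely combinatorial bookkeeping in the general case—verifying that no ``bad'' terms arise in which polynomial growth in $y$ is not matched by Schwartz decay at either $\rho y$ or $\rho\sinh(y)$. This is resolved by observing that the structure above is preserved under both $\partial_\rho$ and $\rho^{-1}\partial_y$ and that no derivative acts on $\rho$ alone in a way that would produce a positive power of $\rho$. With this structural invariance in hand, each term meets the scaling $\rho^{-2-1/p}$ or better, establishing \eqref{coeff_approx_est}.
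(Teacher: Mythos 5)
Your approach is essentially the same as the paper's: prove the base case $n=m=0$ by a Taylor-type estimate near $y=0$ plus Schwartz decay away from $y=0$, and then induct on derivatives by tracking the structure of the remainders. Your FTC integral representation in the base case is a harmless variant of the paper's mean value theorem step.

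However, the structural invariant you state is not quite correct, and you should fix it before the estimate closes cleanly. Applying $\rho^{-1}\partial_y$ to a type-(a) term produces
\begin{equation}
\rho^{-1}\partial_y\bigl[\varphi^{(k)}(\rho y)-\varphi^{(k)}(\rho\sinh y)\bigr]
= \bigl[\varphi^{(k+1)}(\rho y)-\varphi^{(k+1)}(\rho\sinh y)\bigr] + (1-\cosh y)\,\varphi^{(k+1)}(\rho\sinh y)\ , \notag
\end{equation}
and $1-\cosh y$ is \emph{not} of your claimed form $q(y)(y-\sinh y)^j$ with $q$ smooth and $j\geqslant 1$: it is even in $y$ and vanishes only to second order at $y=0$, whereas $(y-\sinh y)^j$ is $O(y^{3j})$, so the quotient $(1-\cosh y)/(y-\sinh y)$ blows up like $3/y$ at the origin. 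Consequently your claim that the remainders give ``an even better gain of $\rho^{-3j-1/p}$'' is false for exactly this family: the quadratic vanishing of $1-\cosh y$ gives the gain $\rho^{-2-1/p}$, which is precisely the rate asserted in \eqref{coeff_approx_est} with no slack. The argument still works, but only after replacing your structural class by remainders of the form $q(y)\,\varphi^{(k)}(\rho\sinh y)$ with $q$ smooth and vanishing to order $\geqslant 2$ at $y=0$ (with at most exponential growth, which Schwartz decay absorbs). This is exactly what the paper does: it isolates $G=(\cosh y-1)\varphi'(\rho\sinh y)=\rho^{-2}(1+\cosh y)^{-1}\,\widetilde{\widetilde\varphi}(\rho\sinh y)$ with $\widetilde{\widetilde\varphi}(x)=x^2\varphi'(x)$ and directly verifies that all derivatives of $G$ obey the $\rho^{-2-1/p}$ bound. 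With that correction to the structural invariant, the inductive step in your proposal goes through and the two proofs coincide in substance.
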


\begin{proof}[Proof of lemma]
The proof is by induction on the number of derivatives. When $n=m=0$
we have the bound $|F(\rho,y)|\lesssim \rho^{-2}\langle \rho y\rangle^{-2}$
which is easy when $|y|\geqslant \epsilon$,  and for $|y|<\epsilon$
follows from the mean value
theorem which gives:
\begin{equation}
		F(\rho,y) \ =\ \varphi'\big(\xi(\rho,y)\big)O(\rho y^3) \ , 
		\qquad \hbox{where \ \ } |\xi(\rho,y)|\geqslant \rho |y| \ . \notag
\end{equation}
 Taking first derivatives we have:
\begin{equation}
		\partial_\rho F(\rho, y) \ = \ \rho^{-1}
		\td{F}(\rho, y) \ , \qquad \td{F}(\rho, y) \ = \ \td\varphi(\rho y) - 
		\td\varphi(\rho \sinh(y)) \ , \notag 
\end{equation}
where $\td\varphi(x)=x\varphi'(x)$. And:
\begin{equation}
		\rho^{-1}\partial_y  F(\rho, y) \ = \ 
		\big( \varphi'(\rho y) 
		-  \varphi'(\rho \sinh(y))\big) - G(\rho,y) \ , 
		\qquad G(\rho,y) \ = \ \rho^{-2}(1+\cosh(y))^{-1} 
		\td{\td{\varphi}}(\rho \sinh(y)) \ , \notag
\end{equation}
where $\td{\td{\varphi}}(x)=x^2\varphi'(x)$. Then the proof concludes by induction and 
the easy estimate:
\begin{equation}
		\lp{\partial_\rho^n (\rho^{-1}\partial_y)^m G }{L^p} \ \leqslant \ C_{n,m}
		(\lp{\td{\td{\varphi}}}{\mathcal{S}}) 
		\rho^{-2-\frac{1}{p}} \ , \notag
\end{equation}
for $G$ of the form above.
\end{proof}

%-------------------------------------------------------------------------
%%%%%%%%%%%%%%%%%%%%%%%%%%%%%% 
%-------------------------------------------------------------------------

\section{Norms}\label{Norms_sect}

The main norms we work with in the paper are as follows.

\begin{defn}[Function spaces]
For sufficiently smooth and well localized functions we define the solution and source spaces,
where we use the shorthand $\dot u=\partial_\rho u$:
\begin{align}
		\lp{u}{S[1,T]} \! &= \! \sup_{1\leqslant \rho \leqslant T} \lp{u(\rho)}{S(\rho)}
		\ , &\hbox{where \ \ }&
		\lp{u(\rho)}{S(\rho)} \! = \! 
		\rho^{-\delta} \lp{(u,\dot{u},\rho^{-1}\partial_y u)(\rho)}{H^1_y} +  
		\lp{(u,\dot u)(\rho)}{L^\infty_y \cap L^2_y} \ , \notag\\
		\lp{F}{N[1,T]} \! &= \! \sup_{1\leqslant \rho \leqslant T} \lp{F(\rho)}{N(\rho)}
		\ , &\hbox{where \ \ }&
		\lp{F(\rho)}{N(\rho)} \! = \! 
		\rho^{1-\delta} \lp{F(\rho)}{H^1_y} + \rho \lp{F(\rho)}{L^\infty_y\cap L^2_y } \ . \notag
\end{align}
We also introduce an auxiliary space to hold time derivatives of solutions to the Klein-Gordon
equation:
\begin{equation}
		\lp{u(\rho)}{\dot{S}(\rho)} \ = \ \rho^{-\delta} \lp{u}{H^1_y} +\lp{u}{L^\infty_y\cap L^2_y} +
		\inf_{v\in S(\rho)}\big(
		\rho^{-\delta}\lp{\dot u - \rho^{-2}\partial_y^2 v}{H^1_y} +
		\lp{\dot u - \rho^{-2}\partial_y^2 v}{L^\infty_y\cap L^2_y}  + \lp{v}{S(\rho)}
		\big) \ . \notag
\end{equation}
Then define $\lp{u}{\dot S[1,T]}=\sup_{1\leqslant \rho\leqslant T}\lp{u(\rho)}{\dot S(\rho)}$ as usual.
\end{defn}

These norms have the obvious uniform inclusions:
\begin{equation}
		S(\rho) \ \subseteq \ \dot S(\rho) \ , \qquad
		\rho ^{-1}  \dot S(\rho) \ \subseteq \  N(\rho) \ , \qquad
		\rho ^{-1} \partial_\rho S(\rho) \ \subseteq\  N(\rho) \ ,  
		 \label{easy_S_to_N}
\end{equation}
which also directly implies the corresponding embeddings for $S[1,T]$,
$\dot S[1,T]$, and $N[1,T]$.
There are a number of other basic estimates for  norms that will come up later,
so we collect them here.

\begin{lem}[Basic properties of the norms]\label{basic_norm_lemma}
The norms defined above have the following algebraic properties:
\begin{align}
		\lp{FG}{N(\rho)} 
		\ &\lesssim \  \rho^{-1}\lp{F}{N(\rho)}\lp{G}{N(\rho)}  \ ,   
		&\lp{(u,\dot u) F}{N(\rho)} 
		\ &\lesssim \   \lp{u}{S(\rho)}\lp{F}{N(\rho)}  \ ,  \label{alg1}\\
		\lp{uv}{S(\rho)} \ &\lesssim \ \lp{u}{S(\rho)} \lp{v}{S(\rho)} \ , 
		&\lp{uv}{\dot S(\rho)} \ &\lesssim \ \lp{u}{S(\rho)}\lp{v}{\dot S(\rho)} \ . \label{alg2}
\end{align}
In addition, the $S(\rho)$ and $\dot S(\rho)$ norms are related as follows:
\begin{equation}
		\lp{(1-\rho^{-2}\partial_y^2)^{-\frac{1}{2}} u}{S(\rho)} \! \lesssim \! \lp{u}{\dot{S}(\rho)}
		\ , \qquad 
		\lp{\dot u}{\dot{S}(\rho)} \! \lesssim \! \lp{u}{S(\rho)} + \rho^{-\delta}
		\lp{(\Box_\mathcal{H}+1)u}{H^1_y} + \lp{(\Box_\mathcal{H}+1)u}{L^2_y\cap L^\infty_y}
		\ . \label{dot_S_bnds}
\end{equation}
Finally,  one has the Duhamel type estimate:
\begin{equation}
		\sup_{1\leqslant \rho \leqslant T}
		\rho^{-\delta} \lp{(u,\dot{u},\rho^{-1}\partial_y u)(\rho)}{H^1_y} \ \lesssim \
		\lp{(u_0,u_1)}{H^2_y\times H^1_y} + \delta^{-1}\lp{F}{N[1,T]} \ , \label{duhamel}
\end{equation}
for solutions to the problem $(\Box_\mathcal{H}+1)u  =  F$ with 
$u(1) =  u_0$ and $ \partial_\rho u(1)  =  u_1$.
\end{lem}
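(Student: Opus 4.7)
The plan is to establish the four groups of estimates in \eqref{alg1}--\eqref{duhamel} in turn, relying on the one-dimensional Sobolev algebra property $H^1(\mathbb{R})\hookrightarrow L^\infty(\mathbb{R})$, the algebraic structure of $\Box_{\mathcal{H}}+1$, and the operator calculus of $B:=(1-\rho^{-2}\partial_y^2)^{-1/2}$. The $N$--$N$ and $S$--$S$ product bounds follow from Leibniz plus H\"older. For the $N$--$N$ bound, every factor satisfies $\lp{F}{L^\infty\cap L^2}\leq \rho^{-1}\lp{F}{N}$ and $\lp{F}{H^1}\leq \rho^{-1+\delta}\lp{F}{N}$, so the $H^1$ piece of $\lp{FG}{N}$ costs at most $\rho^{1-\delta}\cdot\rho^{-1}\cdot\rho^{-1+\delta}=\rho^{-1}$ times the product of $N$ norms, and the $L^\infty\cap L^2$ piece is even better by a power of $\rho$. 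The $S$--$S$ bound is obtained by distributing $\partial_\rho$ and $\rho^{-1}\partial_y$ by Leibniz and then using $H^1\hookrightarrow L^\infty$; every factor arising this way sits either in $L^\infty\cap L^2$ directly from $\lp{u}{S(\rho)}$ or in $H^1$ with a $\rho^{\delta}$ loss that cancels the external $\rho^{-\delta}$ weight. The mixed $(u,\dot u)F$ bound is handled the same way, since $(u,\dot u)\in L^\infty\cap L^2$ is built into $S(\rho)$.

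For the $\dot S$ product bound, let $\dot v = g + \rho^{-2}\partial_y^2 v'$ nearly realize the infimum in $\lp{v}{\dot S(\rho)}$, and take $W = uv'$. The commutator identity
\[
u\rho^{-2}\partial_y^2 v' - \rho^{-2}\partial_y^2(uv') \ = \ -2(\rho^{-1}\partial_y u)(\rho^{-1}\partial_y v') - v'\rho^{-2}\partial_y^2 u
\]
gives $\partial_\rho(uv) - \rho^{-2}\partial_y^2 W = \dot u v + ug - 2(\rho^{-1}\partial_y u)(\rho^{-1}\partial_y v') - v'\rho^{-2}\partial_y^2 u$. The first two products are standard. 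The crucial tool for the cross terms is the sharp 1D Gagliardo-Nirenberg bound $\lp{f}{L^\infty}\lesssim \lp{f}{L^2}^{1/2}\lp{\partial_y f}{L^2}^{1/2}$: applied to $f = \rho^{-1}\partial_y u$ it yields $\lp{\rho^{-1}\partial_y u}{L^\infty}\lesssim \rho^{-1/2+\delta}\lp{u}{S(\rho)}$ (exploiting the better bound $\lp{\rho^{-1}\partial_y u}{L^2}\leq \rho^{-1+\delta}\lp{u}{S(\rho)}$), which gives the negative power of $\rho$ needed to close the estimate; the $v'\rho^{-2}\partial_y^2 u$ term is handled similarly after an integration by parts which trades one of its $\partial_y$ derivatives for one on $v'$. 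For the smoothing inclusion $B\dot S(\rho)\subseteq S(\rho)$, the operator $B$ is the Bessel-type Fourier multiplier with symbol $(1+\rho^{-2}\xi^2)^{-1/2}$; after the rescaling $\xi = \rho\eta$ it has an $L^1$ convolution kernel of norm independent of $\rho$, hence $B$ is bounded on every $L^p_y$ and on $H^1_y$. The commutator $[\partial_\rho, B] = \rho^{-1}(B-B^3)$ is $\rho^{-1}$ times such bounded multipliers, and the operator identity $B\rho^{-2}\partial_y^2 = B - B^{-1}$ converts the ``bad'' $\rho^{-2}\partial_y^2 v'$ piece of $B\dot u$ into a form amenable to the same Sobolev interpolation argument. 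The remaining bound in \eqref{dot_S_bnds} on $\dot u$ follows by taking $W = u$ in the infimum defining $\lp{\dot u}{\dot S(\rho)}$ and substituting $\ddot u - \rho^{-2}\partial_y^2 u = (\Box_{\mathcal{H}}+1)u - u - \tfrac{1}{4}\rho^{-2}u$ from \eqref{box_conj}.

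Finally, for the Duhamel estimate \eqref{duhamel} I would run a standard energy argument for $\Box_{\mathcal{H}}+1$. Testing $(\Box_{\mathcal{H}}+1)u = F$ against $\dot u$ and integrating by parts in $y$, the natural energy
\[
E(\rho) \ := \ \int_{\mathbb{R}}\bigl(|\dot u|^2 + \rho^{-2}|\partial_y u|^2 + |u|^2\bigr)\,dy
\]
satisfies $|\tfrac{d}{d\rho}E|\lesssim \rho^{-2}E + \lp{F(\rho)}{L^2}\sqrt{E}$, the $\rho^{-2}E$ piece coming from the $\tfrac14\rho^{-2}$ and $\rho^{-3}$ terms in $\Box_{\mathcal{H}}$. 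Combining this with $\lp{F(\rho)}{L^2}\leq \rho^{-1+\delta}\lp{F}{N[1,T]}$ and using $\int_1^\rho s^{-1+\delta}\,ds = \delta^{-1}(\rho^\delta - 1)$ yields, via a Gronwall argument, $\sqrt{E(\rho)}\lesssim \lp{(u_0, u_1)}{H^1\times L^2} + \delta^{-1}\rho^\delta\lp{F}{N[1,T]}$, which after multiplying by $\rho^{-\delta}$ produces the claimed bound. Control of $\rho^{-\delta}\lp{\rho^{-1}\partial_y u(\rho)}{H^1_y}$ then comes from applying the identical argument to $\partial_y u$, which solves $(\Box_{\mathcal{H}}+1)\partial_y u = \partial_y F$ since $[\partial_y, \Box_{\mathcal{H}}] = 0$; the stronger $H^2\times H^1$ data norm on the right of \eqref{duhamel} accommodates the extra $\partial_y$. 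The main technical obstacle is the $\dot S$ algebra bound in the middle paragraph: all the $\rho$-weights generated by commuting $\rho^{-2}\partial_y^2$ past multiplication must be absorbed by the interpolation gain $\rho^{-1/2+\delta}$, leaving essentially no margin.
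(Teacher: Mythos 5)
The proofs you supply for the \eqref{alg1} bounds, the first bound on \eqref{alg2}, the second bound on \eqref{dot_S_bnds}, and the Duhamel estimate \eqref{duhamel} are fine and essentially match what the paper regards as ``immediate''. The nontrivial claims are the $\dot S$ algebra bound and the $\langle D_y\rangle^{-1}\dot S\subseteq S$ smoothing bound, and for both of these your argument has a gap in exactly the place the paper devotes its effort.

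For $\lp{uv}{\dot S(\rho)}\lesssim\lp{u}{S(\rho)}\lp{v}{\dot S(\rho)}$ you take $W=uv'$ and then must control the commutator error
\begin{equation}
		\partial_\rho(uv) - \rho^{-2}\partial_y^2(uv') \ = \ \dot u\,v + ug - 2(\rho^{-1}\partial_y u)(\rho^{-1}\partial_y v') - v'\,\rho^{-2}\partial_y^2 u \notag
\end{equation}
in $\rho^\delta H^1_y\cap L^2_y\cap L^\infty_y$ \emph{uniformly in $\rho$}. The first three terms are fine, and your Gagliardo--Nirenberg interpolation $\lp{\rho^{-1}\partial_y u}{L^\infty}\lesssim\rho^{-1/2+\delta}\lp{u}{S(\rho)}$ is correct and handles the third. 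But the fourth term $v'\,\rho^{-2}\partial_y^2 u$ is not controlled. For a dyadic piece $u_k$ with $2^k\gtrsim\rho$ one has $\lp{\rho^{-2}\partial_y^2 u_k}{L^\infty}\lesssim 2^{k/2}\lp{\rho^{-2}\partial_y^2 u_k}{L^2}\lesssim 2^{k/2}\rho^{-1+\delta}\lp{u}{S(\rho)}$, and pairing against $v'_{<k}\in L^\infty$ the sum over $k\geqslant\ln_2\rho$ of $2^{k/2}\rho^{-1+\delta}$ diverges; there is no frequency budget to spend, because $S(\rho)$ only yields $\rho^{-1}\partial_y u\in H^1_y$ with a $\rho^\delta$ loss, never $\rho^{-2}\partial_y^2 u\in L^\infty_y$. ``Integration by parts'' does not fix this: it just regenerates the same term modulo a factor already accounted for. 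The paper avoids this entirely by choosing $w$ to be the paraproduct $w=\sum_{k\geqslant\ln_2\rho}u_{<k-C}\,\td{w}_k$ rather than a full product, so that $D_y^2 w$ only matches the \emph{high-$v$--low-$u$} part of $u\dot v$; the remaining low-frequency part and the high-high part are then estimated directly, using that the high-high interaction carries a summable dyadic gain $\rho^{2\delta}2^{-k/2}$. Your $W=uv'$ unavoidably puts the high-$u$--low-$v'$ interaction into $D_y^2 W$, which is precisely what cannot be compensated.

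A related but less severe issue arises in the $\langle D_y\rangle^{-1}\dot S(\rho)\subseteq S(\rho)$ bound. The identity $B\rho^{-2}\partial_y^2 v' = Bv'-B^{-1}v'$ is correct, but $B^{-1}=\langle D_y\rangle$ is an unbounded operator and $\lp{\langle D_y\rangle v'}{L^2_y\cap L^\infty_y}$ is not uniformly controlled by $\lp{v'}{S(\rho)}$ without further argument. The paper makes this precise by splitting $v'$ at frequency $\rho$: on $|\xi|\lesssim\rho$ the multiplier $D_y^2 P_{<\ln_2\rho}$ (hence $\langle D_y\rangle P_{<\ln_2\rho}$) is bounded on every $L^p_y$, while on $|\xi|\gtrsim\rho$ one uses the embedding $P_{\geqslant\ln_2\rho}:\rho^\delta H^1_y\to L^2_y\cap L^\infty_y$ together with the boundedness of $D_y\langle D_y\rangle^{-1}$ on $\rho^\delta H^1_y$. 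Your phrase ``amenable to the same Sobolev interpolation argument'' hides exactly this frequency threshold; without it the argument does not close.
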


The proofs of the above estimates are for the most part immediate, with the exception
of the second bound on line \eqref{alg2} and the first bound
on line \eqref{dot_S_bnds}. These will be shown   in Section \ref{Harm_sect}.

\begin{rem}
Note that the embedding $(\Box_\mathcal{H}+1)^{-1}N[1,T] \hookrightarrow S[1,T]$
is not uniform and grows like $T^\delta$. To overcome this obstacle we will need 
to use nonlinear estimates to recover the $L^\infty_y\cap L^2_y$ norm of $u$ at low frequencies. 
\end{rem}

Finally, we list here a nonlinear notation for norms. We let $\mathcal{Q}_k(a,b,c,\ldots )$ 
denote any polynomial which has a term of  at least degree $k$ in the first entry
as a factor in each monomial.
A typical example would be the function $\mathcal{Q}_2(a,b)=(a^2 + a^4)(1 +b^2)$ which would then
be used as:
\begin{equation}
		\mathcal{Q}_2(\lp{u}{S[1,T]}, \lp{F}{S[1,T]}) \ =\ 
		(\lp{u}{S[1,T]}^2 + \lp{u}{S[1,T]}^4 )(1+\lp{F}{S[1,T]}^2) \ ,
\end{equation}
where $F$ is some auxiliary quantity appearing in an estimate. The exact form of $\mathcal{Q}$
may change from line to line. In one place in the sequel we'll also use 
$\mathcal{Q}_\frac{3}{2}(\lp{u}{S[1,T]})$ with the obvious interpretation.

%-------------------------------------------------------------------------
%%%%%%%%%%%%%%%%%%%%%%%%%%%%%% 
%-------------------------------------------------------------------------

\section{Main Constructions and Estimates}\label{Main_sect}

The main construction of this paper is the following:

\begin{thm}[Normal Form Corrections]\label{nf_thm}
Let $\alpha_0,\beta_0\in \mathbb{R}$ and $\beta_1=\beta_1(\rho y)$
for some $\beta_1(x)\in \mathcal{S}$, and set $\beta=\beta_0+\beta_1$.
Let $u$ and $w$ solve the equations:
\begin{equation}
		(\Box_\mathcal{H} + 1)u \ = \  \rho^{-\frac{1}{2}}\alpha_0 u^2 + \rho^{-1}\beta u^3 + F
		\ , \qquad
		(\Box_\mathcal{H} + 1)w \ = \     \rho^{-1}\beta_1 w^3 + G
		\ , 
		\label{hyp_KG}
\end{equation}
on the time interval $[1,T]$. Then there exists   nonlinear expressions 
$\mathcal{N}_{quad}=\mathcal{N}_{quad}(u,\dot u)$ and 
$\mathcal{R}_{quad}=\mathcal{R}_{quad}(u,\dot u,F,\dot{F})$
and $\td{\mathcal{R}}_{quad}=\td{\mathcal{R}}_{quad}(u,\dot u,F,\dot{F})$,
 $\mathcal{N}_{cubic}=\mathcal{N}_{cubic}(w,\dot w)$ and 
$\mathcal{R}_{cubic}=\mathcal{R}_{cubic}(w,\dot w,G,\dot G)$,
such that one has the algebraic identities:
\begin{align}
		(\Box_\mathcal{H} + 1)\big( u - \mathcal{N}_{quad}\big) \ &= \ 
		\rho^{-1}\beta  u^3  + F + \mathcal{R}_{quad} \ = \ 
		\rho^{-1}(\beta + 2\alpha_0^2) u^3 -\frac{8}{3} \rho^{-1}\alpha_0^2
		\dot{u}^2 u + F + \td{\mathcal{R}}_{quad} \ , \label{main_nf_iden_quad}\\
		(\Box_\mathcal{H} + 1)\big( w - \mathcal{N}_{cubic}\big) \ &= \ 
		G + \mathcal{R}_{cubic} \ . \label{main_nf_iden_cubic}
\end{align}
For the quadratic corrections one has the   estimates: 
\begin{multline}
		\lp{\rho^\frac{1}{2} \mathcal{N}_{quad} }{S[1,T]} + \lp{(\mathcal{R}_{quad},
		\dot{\mathcal{R}}_{quad})}{N[1,T]}
		+ \lp{  \td{\mathcal{R}}_{quad}
		 }{ L^1_\rho ( L^\infty_y ) [1,T]}
		 \ \lesssim \ \lp{\rho^\frac{1}{2} F}{S[1,T]}^2 +
		  \lp{\rho^\frac{1}{2} \dot{F}}{\dot S[1,T]}^2\\
		 +\lp{\rho( F,\dot F)}{L^\infty_{\rho,y}[1,T]}^2
		+ \mathcal{Q}_2\big(\lp{u}{S[1,T]}, \lp{\rho^\frac{1}{2} F}{S[1,T]} ,
		  \lp{\rho^\frac{1}{2} \dot{F}}{\dot S[1,T]}, \lp{\rho( F,\dot F)}{L^\infty_{\rho,y}[1,T]} \big) 
		 \ . \label{main_nf_ests_quad}
\end{multline}
In the case of the cubic corrections one has:
\begin{equation}
		\lp{\mathcal{N}_{cubic} }{S[1,T]}  		
		+ \lp{\mathcal{R}_{cubic} }{N[1,T]}\ \lesssim \ 
		\mathcal{Q}_2\big(\lp{w}{S[1,T]}, \lp{G}{N[1,T]},\lp{\rho \dot G}{L^\infty_\rho(L^2_y)[1,T]}\big)
		 + \mathcal{Q}_3\big( \lp{G}{N[1,T]}\big)
		  \ . \label{main_nf_ests_cubic}
\end{equation}
\end{thm}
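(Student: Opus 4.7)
The plan is to construct $\mathcal{N}_{quad}$ and $\mathcal{N}_{cubic}$ separately as bilinear and trilinear pseudodifferential expressions, to derive the algebraic identities \eqref{main_nf_iden_quad} and \eqref{main_nf_iden_cubic} by direct substitution of \eqref{hyp_KG}, and then to prove the estimates from the harmonic analysis developed in Section \ref{Harm_sect} together with Lemma \ref{basic_norm_lemma}. The two constructions are essentially independent: $\mathcal{N}_{quad}$ removes the quadratic term $\rho^{-1/2}\alpha_0 u^2$ from the first equation via a Shatah-type transform, while $\mathcal{N}_{cubic}$ handles only the variable coefficient cubic $\rho^{-1}\beta_1 w^3$ of the second equation.

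For the quadratic correction I adapt Shatah's method to hyperbolic coordinates. To leading order $\Box_\mathcal{H}+1 \approx \partial_\rho^2 + 1$ modulo the small term $\rho^{-2}\partial_y^2$, so at fixed $y$ solutions oscillate like $e^{\pm i\rho}$, and the product $u^2$ occupies the temporal frequencies $0,\pm 2$, all non-resonant with the $\pm 1$ symbol of the linear equation. I write $\mathcal{N}_{quad}$ as a linear combination of bilinear expressions of the schematic form $B(u,u) + B'(u,\dot u) + B''(\dot u, \dot u)$ with symbols built from the $\rho^{-1}\partial_y$ multiplier, chosen so that the principal part of $(\Box_\mathcal{H}+1)\mathcal{N}_{quad}$ cancels $\rho^{-1/2}\alpha_0 u^2$; invertibility on the non-resonant symbols makes this a routine algebraic step. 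Substituting \eqref{hyp_KG} back into the cross terms that arise when $\Box_\mathcal{H}+1$ hits $\mathcal{N}_{quad}$ produces cubic and $F$-dependent remainders which we collect into $\mathcal{R}_{quad}$, giving the first form of \eqref{main_nf_iden_quad}. The second form requires a further paradifferential decomposition of this cubic remainder: the resonant part reduces to $u^3$ and $\dot u^2 u$ with explicitly computable coefficients $2\alpha_0^2$ and $-\tfrac{8}{3}\alpha_0^2$, while the non-resonant part is placed in $\td{\mathcal{R}}_{quad}$ and bounded in $L^1_\rho(L^\infty_y)$ via stationary phase.

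For the cubic correction, the heuristic overview identifies $\rho^{-1}\beta_1(\rho y)\, w_{low}^3$ as the only dangerous interaction, with $w_{low}$ asymptotically behaving like $a_+(y)e^{i\rho} + a_-(y)e^{-i\rho}$. This suggests defining $\mathcal{N}_{cubic}$ by formally inverting $\Box_\mathcal{H}+1$ against $\rho^{-1}\beta_1(\rho y)$ multiplied by trilinear projections of $w$ onto temporal frequencies $\pm 1$ and $\pm 3$. The key point is that $\beta_1(\rho y)$ has $y$-frequencies at scale $\rho$, so when it multiplies the slowly varying oscillation $e^{\pm ik\rho}$ the symbol of $\Box_\mathcal{H}+1$ is uniformly bounded below and can be inverted explicitly; the coefficient estimate Lemma \ref{rhoy_lem} supplies the decay of $\beta_1(\rho y)$ needed to carry out this inversion in our function spaces. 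The resulting $\mathcal{N}_{cubic}$ is a trilinear paraproduct in $(w,\dot w)$ and $\mathcal{R}_{cubic}$ collects the commutator-type errors from the paraproduct localization together with the $G, \dot G$ dependent pieces generated when $\ddot w$ is traded for $G$ via \eqref{hyp_KG}.

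The main obstacle is the cubic construction: because $\mathcal{N}_{cubic}$ is trilinear, $(\Box_\mathcal{H}+1)\mathcal{N}_{cubic}$ produces many cross terms with first and second time derivatives falling on different factors, and each use of \eqref{hyp_KG} to replace $\ddot w$ risks a loss of derivatives or of temporal decay. This is precisely the reason for introducing the $\dot{S}$ space and the algebraic bounds \eqref{alg1}--\eqref{alg2}: the embedding \eqref{dot_S_bnds} converts $\dot u$ information back into $S$-control at the cost of an extra $\langle \rho^{-1}\partial_y\rangle^{-1}$, which is exactly the gain furnished by the non-resonant symbol of $\Box_\mathcal{H}+1$. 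The bilinear and trilinear $\Psi$DO calculus and paraproducts adapted to $\partial_y$ built in Section \ref{Harm_sect} are designed precisely so that all of these estimates close uniformly in $T$.
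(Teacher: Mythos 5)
Your broad architecture matches the paper: the quadratic piece is a Shatah-style bilinear normal form and the cubic piece is an explicit trilinear paraproduct ansatz exploiting the interplay between spatial frequencies of $\beta_1(\rho y)$ and the temporal oscillations of $w$. The reliance on the $\Psi$DO calculus and the $\dot S$ space to absorb derivative loss when trading $\ddot w$ via the equation is also the right mechanism. But there are two genuine gaps, one of which is fatal as written.

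The serious gap is your claim that, for the cubic correction, ``when $\beta_1(\rho y)$ multiplies the slowly varying oscillation $e^{\pm ik\rho}$ the symbol of $\Box_\mathcal{H}+1$ is uniformly bounded below and can be inverted explicitly.'' This is false, and the failure is precisely the heart of the problem. After the gauge transform the cubic symbol system reduces to two scalar equations, $(\Box_\mathcal{H}+1)K_1 = 3e^{i\rho}\beta_1$ and $(\Box_\mathcal{H}+1)K_2 = e^{3i\rho}\beta_1$. Conjugating by $e^{3i\rho}$, the second requires inverting $D_y^2 - 8 + (\text{lower order})$ against $(\beta_1)_k$. Since $(\beta_1)_k$ sits at $\partial_y$-frequency $2^k$, the semi-classical frequency $D_y = \rho^{-1}\partial_y$ of the $k$-th piece runs through $\pm\sqrt{8}$ exactly when $\rho\approx 2^k$. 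On this dispersive time scale the symbol degenerates, and the paper must solve the equation via the genuine wave propagator in $(t,x)$ coordinates and then estimate by stationary phase (Lemma \ref{SP_lem} and the $h_k^{disp}$ construction). Your proposal omits this resonant region entirely; simple Fourier division as you describe only works on the elliptic, high, and smooth time scales. The $0$-resonance equation is similarly only invertible after cutting out $\xi=0$, which is why the paper uses $D_y^{-2}$ together with the hypothesis $k>0$.

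A secondary inaccuracy: you attribute the second identity in \eqref{main_nf_iden_quad} to a ``paradifferential decomposition of the cubic remainder'' with the non-resonant part bounded ``via stationary phase.'' In the paper the mechanism is much more elementary: the bilinear $\Psi$DO's appearing in the quadratic error term $T_2$ are evaluated at the origin of their symbol, and Proposition \ref{imp_L00_prop} gives $\lp{K[u,v]-k(0,0)uv}{L^\infty_y}\lesssim \rho^{-1/2+\delta}\lp{u}{A}\lp{v}{B}$, so the corrections $\rho^{-1}(2\alpha_0^2 u^3 - \frac{8}{3}\alpha_0^2\dot u^2 u)$ come simply from the explicit symbol values $k_i(0,0)$, with the remainder falling into $\td{\mathcal{R}}_{quad}$ at an integrable rate. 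No stationary phase is involved at this stage. This is a misdescription rather than a gap, but it matters because one cannot afford to lose $\rho^{\delta}$ factors in $L^1_\rho(L^\infty_y)$ and the origin-evaluation trick is what delivers the clean $\rho^{-3/2+\delta}$ rate.

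A small technical note: the paper's $\mathcal{N}_{quad} = \rho^{-1/2}K_0[u,u] + \rho^{-1/2}K_2[\dot u,\dot u]$ contains no mixed $u\dot u$ bilinear term; the $T_1$ cancellation calculation forces the symbol choice $k_0 = \alpha_0(1-2\xi\eta)q^{-1}$, $k_2 = 2\alpha_0 q^{-1}$ without a mixed piece. Allowing a generic $B'(u,\dot u)$ term is harmless but not needed, and one should verify that its absence doesn't break the cancellation in your argument.
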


To this information we add the following nonlinear $N\to S$ space bound:

\begin{prop}[Nonlinear Estimates]\label{nonlin_est_prop}
Let $a=a_0+ a_1(\rho y)$ and $b=b_0 + b_1(\rho y)$ for real functions 
$a,b$ with $a_1,b_1\in \mathcal{S}$ and $a_0,b_0\in\mathbb{R}$.  
Let $u,w$ solve the inhomogeneous nonlinear equations:
\begin{align}
		(\Box_\mathcal{H}+1 + \rho^{-\frac{1}{2}} a u + \rho^{-1}b u^2 )u \ &= \ F \ , \label{nonlin1}\\
		(\Box_\mathcal{H}+1 + \rho^{-1} a w^2 + \rho^{-1}b \dot{w}^2 )w \ &= \ G \ , \label{nonlin2}
\end{align}
on the interval $[1,T]$, with initial data $u(1)=u_0$ and $\partial_\rho u(1)=u_1$
and similarly for $w$.
Then one has the   nonlinear estimates:
\begin{align}
		\lp{(u,\dot u)}{L^\infty_\rho( L^2_y) [1,T]  }  &\lesssim \lp{(u_0,u_1)}{H^1_y \times L^2_y}+  
		\mathcal{Q}_\frac{3}{2} \big(\lp{u}{S[1,T]} \big) +  
		\lp{F}{L^1_\rho (L^2_y )  [1,T]} \ . \label{nonlin_nrg_est1}\\
		\lp{(w,\dot w)}{  L^\infty_{\rho,y} [1,T]} &\lesssim  \lp{(w_0,w_1)}{H^2_y \times H^1_y} 
		+ \!\!\!\!\sup_{1\leqslant \rho\leqslant T} \rho^{-c}\lp{w(\rho)}{S(\rho)}
		+ \mathcal{Q}_2\big(\lp{w}{S[1,T]} \big)+
		\lp{G}{L^1_\rho (  L^\infty_y)  [1,T]} \ . \label{nonlin_nrg_est2}
\end{align}
for some sufficiently small $c>0$ in the second estimate.
\end{prop}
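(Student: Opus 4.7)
I will prove \eqref{nonlin_nrg_est1} and \eqref{nonlin_nrg_est2} by two different exploitations of the almost-Hamiltonian structure of $\Box_\mathcal{H}+1$ perturbed by a cubic potential. A point used throughout is that since $a_1,b_1\in\mathcal{S}$ we have $\partial_\rho a_1(\rho y) = \rho^{-1}\tilde a_1(\rho y)$ with $\tilde a_1(x)=xa_1'(x)\in\mathcal{S}$, and similarly for $b_1$; so differentiating the variable coefficients in $\rho$ always produces an additional integrable factor $\rho^{-1}$, ensuring that coefficient-variation errors are absolutely integrable.

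\textbf{Proof of \eqref{nonlin_nrg_est1}.} I would pair \eqref{nonlin1} with $\dot u$ and integrate in $y$ to get
\begin{equation*}
\frac{d}{d\rho}\mathcal{E}_1(\rho) \ = \ \int \dot u\, F\, dy + \text{errors}, \qquad
\mathcal{E}_1 \ = \ \tfrac{1}{2}\lp{(\dot u,u,\rho^{-1}\partial_y u)}{L^2_y}^2 + \tfrac{1}{3}\!\!\int\!\!\rho^{-\frac{1}{2}}a\, u^3\, dy + \tfrac{1}{4}\!\!\int\!\! \rho^{-1}b\, u^4\, dy.
\end{equation*}
The linear part contributes a nonnegative $\rho^{-3}\lp{\partial_y u}{L^2_y}^2$ term and a $\rho^{-2}$ mass correction that is harmless. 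The two nonlinear correctors to $\mathcal{E}_1$ are bounded pointwise in $\rho$ by $\rho^{-1/2}\lp{u}{L^\infty_y}\lp{u}{L^2_y}^2$ and $\rho^{-1}\lp{u}{L^\infty_y}^2\lp{u}{L^2_y}^2$, hence by $\mathcal{Q}_3(\lp{u}{S(\rho)})$. All remaining error terms are integrable in $\rho$: those from differentiating the variable parts of $a,b$ pick up the $\rho^{-1}$ gain above, while those from the $\rho^{-\frac{1}{2}},\rho^{-1}$ prefactors yield $\rho^{-\frac{3}{2}}u^3$ and $\rho^{-2}u^4$, both integrable under the $S(\rho)$ control. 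A standard Gronwall estimate then yields \eqref{nonlin_nrg_est1}, with the $\mathcal{Q}_{3/2}$ absorbing both the potential pieces of $\mathcal{E}_1$ and all integrated errors.

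\textbf{Proof of \eqref{nonlin_nrg_est2}.} This is a pointwise-in-$y$ $L^\infty$ bound; I would follow the low-high frequency split sketched in the heuristic overview. Write $w=w_L+w_H$ with $w_L=P_{<K}w$ and $K=\rho^c$ for a suitable small $c>0$ (chosen so that $c>2\delta$ and $2c<1$). The high-frequency piece is handled by Bernstein: $\lp{w_H}{L^\infty_y}\lesssim K^{-\frac{1}{2}+\epsilon'}\lp{w}{H^1_y}\lesssim \rho^{-c'}\lp{w}{S(\rho)}$ for some $c'>0$, accounting for the $\sup_\rho\rho^{-c}\lp{w}{S(\rho)}$ term in \eqref{nonlin_nrg_est2}; the analogous $\dot w_H$ bound uses \eqref{dot_S_bnds}. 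For $w_L$, projecting \eqref{nonlin2} and isolating the diagonal cubic terms gives
\begin{equation*}
\ddot w_L + w_L + \rho^{-1}a\, w_L^3 + \rho^{-1}b\, \dot w_L^2\, w_L \ = \ \rho^{-2}\partial_y^2 w_L - \tfrac{1}{4}\rho^{-2} w_L + P_{<K}G + \mathcal{C},
\end{equation*}
where $\mathcal{C}$ collects the frequency-cross terms (such as $\rho^{-1}a\, w_L^2 w_H$) and the commutators $[P_{<K},a w^2]w$, $[P_{<K},b\dot w^2]w$, etc.; by the harmonic analysis of Section~\ref{Harm_sect} these satisfy $\lp{\mathcal{C}}{L^1_\rho(L^\infty_y)}\lesssim \mathcal{Q}_2(\lp{w}{S[1,T]})$, while $\lp{\rho^{-2}\partial_y^2 w_L}{L^\infty_y}\lesssim \rho^{-2+2c}\lp{w}{L^\infty_y}$ is likewise integrable. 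At each fixed $y$ I would then introduce the modified Hamiltonian
\begin{equation*}
\mathcal{E}_2(\rho,y) \ = \ \tfrac{1}{2}(\dot w_L^2 + w_L^2) + \tfrac{1}{4}\rho^{-1}a\, w_L^4 + \tfrac{1}{2}\rho^{-1}b\, \dot w_L^2\, w_L^2 + \tfrac{1}{4}\rho^{-1}b\, w_L^4,
\end{equation*}
multiply the projected equation by $\dot w_L$, and integrate in $\rho$ pointwise in $y$ to recover the desired $L^\infty_{\rho,y}$ bound in terms of data, $\lp{G}{L^1_\rho(L^\infty_y)}$, and $\mathcal{Q}_2(\lp{w}{S[1,T]})$.

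\textbf{Main obstacle.} The cubic-in-velocity term $\rho^{-1}b\, \dot w_L^2 w_L$, when paired with $\dot w_L$, yields $\rho^{-1}b\, \dot w_L^3 w_L$, which is not directly a total $\rho$-derivative, and the naive pointwise bound $\int_1^T \rho^{-1}|\dot w|^3|w|\, d\rho$ is logarithmically divergent. The resolution is to take as modified potential the combination $\tfrac{1}{2}\rho^{-1}b\, \dot w_L^2\, w_L^2 + \tfrac{1}{4}\rho^{-1}b\, w_L^4$: differentiating the first piece produces $\rho^{-1}b\, \dot w_L^3 w_L$ (as desired) along with an extraneous $\rho^{-1}b\, \ddot w_L\, \dot w_L\, w_L^2$; substituting $\ddot w_L \approx -w_L$ from the projected equation itself converts this into $-\rho^{-1}b\, \dot w_L\, w_L^3$ modulo higher-order pieces, which is in turn cancelled by $\partial_\rho\bigl(\tfrac{1}{4}\rho^{-1}b\, w_L^4\bigr) = \rho^{-1}b\, \dot w_L w_L^3 + \mathcal{O}(\rho^{-2})$. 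Verifying that the commutators and cross terms in $\mathcal{C}$ genuinely lie in $L^1_\rho(L^\infty_y)$ with the claimed $\mathcal{Q}_2$ dependence, uniformly as $K=\rho^c$ grows, is the main technical task of the proof.
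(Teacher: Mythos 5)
Your proposal is correct and follows essentially the same route as the paper's proof: the $L^2$ estimate \eqref{nonlin_nrg_est1} is obtained by pairing with $\dot u$ and integrating in $y$ (the paper simply calls this "standard" and leaves details to the reader), and the $L^\infty$ estimate \eqref{nonlin_nrg_est2} is obtained by frequency localizing to $|\xi|\lesssim\rho^c$, pairing the projected equation with $\dot w_L$, and integrating in $\rho$ pointwise in $y$. Your explicit modified Hamiltonian $\mathcal{E}_2$ is a cleaner packaging of the same integration-by-parts cascade the paper performs in Step 2, Case 2 of Section \ref{Igor_sect}: there the term $\rho^{-1}b\,\dot w_L\,\partial_\rho\dot w_L\,w_L^2$ is handled via the identity \eqref{ddot_w_exp}, which amounts to the $\ddot w_L\approx -w_L$ substitution you describe, and the residual $-\rho^{-1}b\,\dot w_L\,w_L^3$ is absorbed exactly by the quartic $\tfrac14\rho^{-1}b\,w_L^4$ piece of your $\mathcal{E}_2$, mirroring the paper's "Case 1" integration by parts. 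You have correctly identified the one genuine algebraic obstruction (the cubic-in-velocity term) and given the right resolution.

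One small imprecision worth flagging: you assert $\lp{\mathcal{C}}{L^1_\rho(L^\infty_y)}\lesssim\mathcal{Q}_2(\lp{w}{S[1,T]})$, but $\mathcal{C}$ must include the commutator $[\partial_\rho^2,P_{<K}]w$ arising because the cutoff $K=\rho^c$ is time-dependent, and that term is \emph{linear} in $w$, so it cannot sit under a $\mathcal{Q}_2$. The paper's bound \eqref{L00_nonlin_rem} is $\mathcal{Q}_1$ for exactly this reason, and its linear contribution is what forces the presence of the $\sup_{1\leqslant\rho\leqslant T}\rho^{-c}\lp{w(\rho)}{S(\rho)}$ term on the RHS of \eqref{nonlin_nrg_est2} (it is converted into that via Young's inequality with the $\rho^{-1-\frac12c+\delta}$ weight doing the work). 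You implicitly allow for such a term in your split of $w=w_L+w_H$, so this is only a matter of bookkeeping, not of substance — but the $\mathcal{Q}_2$ claim as stated is not quite right for the linear commutator piece.

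### Comparison note

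The paper works entirely with $L^2$ pairing followed by a sup in $y$ at the end, whereas you phrase the $L^\infty$ argument as a pointwise-in-$y$ ODE energy; these are equivalent here since $\Box_\mathcal{H}+1$ has no $\partial_y$ first-order term and the $\rho^{-2}\partial_y^2 w_L$ piece is moved to the source. Your formulation makes the underlying almost-Hamiltonian structure more transparent. The trade-off is that the paper's careful tracking of the coefficient projections $a_{<c\ln_2(\rho)+C}$ and the further splitting (Case 2.b) using \eqref{P_k_beta} is precisely the "main technical task" you acknowledge but do not carry out; nothing in your sketch suggests a shortcut around it, so the two proofs are, in substance, the same.
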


Combining Theorem \ref{nf_thm} and Proposition \ref{nonlin_est_prop} 
we have the following a-priori bound which implies  Theorem 
\ref{main_thm_hyp}
through the usual local existence and continuity argument.

\begin{thm}[Main A-Priori Estimate]\label{main_ap_thm}
Let $\beta=\beta_0+\beta_1$ with $\beta_1\in \mathcal{S}$, and let $u$ solve the equation:
\begin{equation}
		(\Box_\mathcal{H}+1)u \ = \ \rho^{-\frac{1}{2}}\alpha_0 u^2 + 
		\rho^{-1} \beta\big(\rho\sinh(y)\big)u^3 \ , \notag
\end{equation}
on $[1,T]$ with initial data 
$u(1)=u_0$ and $\partial_\rho u(1)=u_1$. Then one has the following a-priori
estimate for some $C\geqslant 2$:
\begin{equation}
		\lp{u}{S[1,T]} \ \lesssim \ \lp{(u_0,u_1)}{H^2_y \times H^1_y} 
		+ \lp{u}{S[1,T]}^\frac{3}{2} + \lp{u}{S[1,T]}^{C} \ 
		. \label{main_ap_est}
\end{equation}
\end{thm}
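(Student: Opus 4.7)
The plan is a standard bootstrap closing the three ingredients of the $S$-norm—the slowly growing $H^1_y$ component, the uniform $L^2_y$ component, and the uniform $L^\infty_y$ component of $(u,\dot u)$—each via a different device. The long-range quadratic $\rho^{-\frac{1}{2}}\alpha_0 u^2$ and the Schwartz cubic $\rho^{-1}\beta_1 u^3$ are both outside $N[1,T]$ on the right-hand side of Duhamel, so I first sanitize the equation by subtracting the two normal form corrections from Theorem \ref{nf_thm}. Constant-coefficient cubics, by contrast, will be carried inside the operator and handled by the nonlinear pointwise estimate \eqref{nonlin_nrg_est2}.

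Concretely, set $v = u - \mathcal{N}_{quad}(u,\dot u)$ with $F\equiv 0$; the second form of \eqref{main_nf_iden_quad} gives
\begin{equation*}
    (\Box_{\mathcal H}+1)v \ = \ \rho^{-1}(\beta + 2\alpha_0^2)u^3 - \tfrac{8}{3}\rho^{-1}\alpha_0^2 \dot u^2 u + \td{\mathcal R}_{quad} \ .
\end{equation*}
Splitting the cubic as $\rho^{-1}\beta_1 v^3 + [\rho^{-1}\beta_1(u^3-v^3) + \rho^{-1}(\beta_0 + 2\alpha_0^2)u^3]$, I apply the cubic normal form to $v$ with $G$ equal to the bracketed terms, the $\dot u^2 u$ term, and $\td{\mathcal R}_{quad}$. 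Setting $\td v = v - \mathcal N_{cubic}(v,\dot v)$, \eqref{main_nf_iden_cubic} yields $(\Box_{\mathcal H}+1)\td v = G + \mathcal R_{cubic}$. Substituting $u = \td v + \mathcal N_{quad} + \mathcal N_{cubic}$ into $G$ and separating the pure $\td v$ triple products, the equation for $\td v$ rearranges into
\begin{equation*}
    \bigl(\Box_{\mathcal H} + 1 + \rho^{-1}a\td v^2 + \rho^{-1}b\dot{\td v}^2\bigr)\td v \ = \ H \ ,
\end{equation*}
with $a = -(\beta_0 + 2\alpha_0^2)$, $b = \tfrac{8}{3}\alpha_0^2$, and $H$ collecting all mixed monomials in $(\td v, \dot{\td v}, \mathcal N_{quad}, \mathcal N_{cubic})$ and their $\partial_\rho$'s, the Schwartz piece $\rho^{-1}\beta_1(u^3-v^3)$, and the remainders $\td{\mathcal R}_{quad}$, $\mathcal R_{cubic}$.

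With this reduction in hand the closure is mechanical. Using \eqref{main_nf_ests_quad}–\eqref{main_nf_ests_cubic}, the algebra bounds \eqref{alg1}–\eqref{alg2}, and the auxiliary inclusion \eqref{dot_S_bnds}, every piece of $H$ lies in $N[1,T]$ (and in $L^1_\rho(L^\infty_y)$, $L^1_\rho(L^2_y)$ where needed) with a majorant of the form $\mathcal Q_2(\lp{u}{S[1,T]}) + \lp{u}{S[1,T]}^C$. The Duhamel estimate \eqref{duhamel} applied to the $\td v$ equation (treating the $\rho^{-1}a\td v^3$ and $\rho^{-1}b\dot{\td v}^2\td v$ terms as sources, admissible since each is bounded in $N(\rho)$ by $\lp{\td v}{S(\rho)}^3$) delivers the $\rho^{-\delta}L^\infty_\rho(H^1_y)$ part of $\lp{\td v}{S[1,T]}$. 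For the $L^\infty_\rho(L^2_y)$ part of $(u,\dot u)$, the original $u$ already fits \eqref{nonlin1} with $F\equiv 0$ (take $a=-\alpha_0$, $b_0 = -\beta_0$, $b_1 = -\beta_1$), so \eqref{nonlin_nrg_est1} gives the bound $\mathcal Q_{3/2}(\lp{u}{S[1,T]})$ directly, and this is the origin of the $\lp{u}{S[1,T]}^{3/2}$ term in \eqref{main_ap_est}. Finally, since the $\td v$ equation is already in the form \eqref{nonlin2}, \eqref{nonlin_nrg_est2} delivers the $L^\infty_\rho(L^\infty_y)$ bound for $(\td v,\dot{\td v})$, the $\rho^{-c}\lp{\td v}{S(\rho)}$ slack absorbed by smallness in the bootstrap. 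Undoing the two normal forms via $\lp{u}{S[1,T]} \leq \lp{\td v}{S[1,T]} + \lp{\mathcal N_{quad}}{S[1,T]} + \lp{\mathcal N_{cubic}}{S[1,T]}$ and appealing once more to \eqref{main_nf_ests_quad}–\eqref{main_nf_ests_cubic} converts the right-hand side back into $\lp{u}{S}$ plus the higher powers, producing \eqref{main_ap_est}.

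The delicate step is the bookkeeping in the expansion of $G$ in the variables $(\td v, \mathcal N_{quad}, \mathcal N_{cubic})$: each cross monomial must land in a space compatible with the right-hand sides of \eqref{nonlin_nrg_est1}, \eqref{nonlin_nrg_est2}, \eqref{duhamel}, and in particular the $\dot{\mathcal N}_{quad}$ and $\dot{\mathcal N}_{cubic}$ factors have to be routed through the auxiliary $\dot S$ space via \eqref{dot_S_bnds}, which is exactly the purpose for which $\dot S$ was introduced. Secondary care is needed to ensure that the subtracted corrections shift the effective initial data $\td v(1),\dot{\td v}(1)$ only by quadratic-and-higher quantities in $\lp{(u_0,u_1)}{H^2\times H^1}$, which are trivially absorbed for small data.
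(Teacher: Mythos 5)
Your overall architecture---coefficient reduction (implicitly), quadratic normal form, cubic normal form, then the three energy-type estimates from Proposition \ref{nonlin_est_prop} and \eqref{duhamel}---is the same as the paper's, but you reorder two steps in a way that opens a genuine gap.

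The problem is the point at which you apply the cubic normal form. You subtract \emph{both} corrections up front, forming $\td v = u - \mathcal N_{quad} - \mathcal N_{cubic}$, and then run the pointwise estimate \eqref{nonlin_nrg_est2} on $\td v$. But the source $H$ of the $\td v$ equation then contains $\mathcal R_{cubic}$, which the cubic normal form theorem controls \emph{only} in $N[1,T]$, i.e. $\rho\lp{\mathcal R_{cubic}(\rho)}{L^\infty_y} \lesssim 1$. That yields merely $\lp{\mathcal R_{cubic}(\rho)}{L^\infty_y} \lesssim \rho^{-1}$, so $\lp{\mathcal R_{cubic}}{L^1_\rho(L^\infty_y)[1,T]}$ diverges like $\log T$; the $L^1_\rho(L^\infty_y)$ source term in \eqref{nonlin_nrg_est2} is therefore not under control, and the bootstrap cannot close uniformly in $T$. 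The paper avoids this by running \eqref{nonlin_nrg_est2} on $w = u - \mathcal N_{quad}$ \emph{before} the cubic normal form, keeping the Schwartz cubic $\rho^{-1}\beta_1 w^3$ on the left-hand side of \eqref{nonlin2}---which is permitted, since Proposition \ref{nonlin_est_prop} explicitly allows $a = a_0 + a_1(\rho y)$ with $a_1 \in \mathcal S$. The only normal-form remainder entering the $L^\infty$ step is then $\td{\mathcal R}_{quad}$, which is bounded in $L^1_\rho(L^\infty_y)$ by \eqref{main_nf_ests_quad} precisely for this purpose. The cubic correction is only applied afterward, in the Duhamel step, where $N[1,T]$ control of $\mathcal R_{cubic}$ is exactly what is needed.

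Two secondary points. First, you never perform the coefficient replacement $\beta_1(\rho\sinh y) \to \beta_1(\rho y)$ via Lemma \ref{rhoy_lem}; both Theorem \ref{nf_thm} and Proposition \ref{nonlin_est_prop} are stated for coefficients of the form $a_1(\rho y)$, so this reduction (with the resulting error $F = \rho^{-1}(\beta_1(\rho\sinh y) - \beta_1(\rho y))u^3$ absorbed as a source via \eqref{coeff_approx_est}) must be carried out before anything else. Second, the term $\sup_\rho \rho^{-c}\lp{u(\rho)}{S(\rho)}$ on the right of \eqref{nonlin_nrg_est2} is \emph{linear} in $\lp{u}{S[1,T]}$ and cannot simply be ``absorbed by smallness''; the paper's Step 4 handles it by splitting the sup at a fixed large $T_0$, using $T_0^{-c}$ smallness on $[T_0,T]$ and the $T_0^\delta$-amplified $H^1$ bound on $[1,T_0]$. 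Your plan omits this device and would leave an uncontrolled linear term.
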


\begin{proof}[Proof that Theorem \ref{nf_thm} and Proposition \ref{nonlin_est_prop} imply
Theorem \ref{main_ap_thm}]
The proof is in a series of steps.

\step{1}{Coefficient reduction and the $L^2_y$ estimate}
First we write:
\begin{equation}
		(\Box_\mathcal{H}+1)u \ = \ \rho^{-\frac{1}{2}}\alpha_0 u^2 + 
		\rho^{-1} \beta(\rho y)  u^3 + F \ ,
		\qquad \hbox{where\ \ } 
		F\ = \ \rho^{-1}\big( \beta_1(\rho \sinh(y))-\beta_1(\rho y)\big)u^3 \ . \notag
\end{equation}
For the functions $F(\rho,y,u)$ one has from Lemma \ref{rhoy_lem} and
the various product estimates and inclusions in Lemma \ref{basic_norm_lemma}
the collection of bounds :
\begin{equation}
		\lp{F}{L^1_\rho (L^2_y \cap L^\infty_y)  [1,T]} \!+\!
		\lp{\rho^\frac{1}{2} F}{S[1,T]} \!+\!
		  \lp{\rho^\frac{1}{2} \dot{F}}{\dot S[1,T]}
		 \!+\! \lp{\rho( F,\dot F)}{L^\infty_{\rho,y}[1,T]} 
		 \!+\! \lp{(F,\dot F)}{N[1,T]}
		  \! \lesssim\!  \mathcal{Q}_3\big(
		 \lp{u}{S[1,T]}
		 \big) \ . \label{F_bounds}
\end{equation}
By combining the $L^1L^2$ estimate for $F$   above with the nonlinear 
energy estimate \eqref{nonlin_nrg_est1} we have:
\begin{equation}
		\lp{(u,\dot u)}{L^\infty_\rho(L^2_y )[0,T]} \ \lesssim \ \lp{(u_0,u_1)}{H^1_y \times L^2_y}+  
		\mathcal{Q}_\frac{3}{2}\big( \lp{u}{S[1,T]} \big)  \ . \label{L2_part}
\end{equation}

\step{2}{Quadratic correction and the $L^\infty_y$ estimate}
Next, introduce $w=u-\mathcal{N}_{quad}$ 
which solves the equation:
\begin{equation}
		(\Box_\mathcal{H}+1)w \ = \ \rho^{-1}(\beta + 2\alpha_0^2) w^3 -\frac{8}{3} \rho^{-1}\alpha_0^2
		\dot{w}^2 w + \td{F} \ , \notag
\end{equation}
where:
\begin{equation}
		\td{F} \ = \ \rho^{-1}(\beta + 2\alpha_0^2) (u^3-w^3) -\frac{8}{3} \rho^{-1}\alpha_0^2(
		\dot{u}^2 u-\dot{w}^2 w) + F + \td{\mathcal{R}}_{quad} \ , \notag
\end{equation}
and where $\td{\mathcal{R}}_{quad}$ is defined on the far RHS of line \eqref{main_nf_iden_quad} 
and satisfies the relevant portion of estimate \eqref{main_nf_ests_quad}.
Combining   with the improved $S$ norm
bounds for $\mathcal{N}_{quad}=u-w$ from line \eqref{main_nf_ests_quad}, and using the $L^1L^\infty$ 
part of \eqref{F_bounds} as well,
we have $\lp{\td{F}}{L^1_\rho(L^\infty_y)[1,T]}  \lesssim 
\mathcal{Q}_2(\lp{u}{S[1,T]})$. Therefore by estimate \eqref{nonlin_nrg_est2} we control
$\lp{(w,\dot w)}{L^\infty_{\rho,y}[1,T]}$,
and by a further application of the  $S$ estimate  \eqref{main_nf_ests_quad}
for $\mathcal{N}_{quad}$ (which also gives an estimate for the data $\lp{(w_0,w_1)}{H^2_y \times H^1_y}$
thanks to $S(1)\subseteq H^2_y \times H^1_y$) 
we have:
\begin{equation}
		\lp{(u,\dot u)}{  L^\infty_{\rho,y}[1,T] } \ \lesssim \ 
		\sup_{1\leqslant \rho\leqslant T} \rho^{-c}\lp{u(\rho)}{S(\rho)}+
		\lp{(u_0,u_1)}{H^2_y \times H^1_y} 
		+ \mathcal{Q}_2(\lp{u}{S[1,T]}) \ . \label{L00_part}
\end{equation}

\step{3}{Cubic correction and the $\rho^\delta H^1_y$ estimate}
Finally, we rewrite the equation for $w$ above as:
\begin{equation}
		(\Box_\mathcal{H}+1)w \ = \ \rho^{-1} \beta_1   w^3 +G \ , \notag
\end{equation}
where:
\begin{equation}
		G \ = \ \rho^{-1} \beta_1   (u^3- w^3)+
		\rho^{-1} \beta_0   u^3  + F + \mathcal{R}_{quad} \ . \notag
\end{equation}
In order to invoke the cubic part of Theorem \ref{nf_thm} we need to show the estimate:
\begin{equation}
		\lp{G}{N[1,T]} + \lp{\rho \partial_\rho G}{L^\infty_\rho(L^2_y)[1,T]}
		\ \lesssim \ \mathcal{Q}_2(\lp{u}{S[1,T]}) \ . \label{G_est}
\end{equation}
For the quantities $F$ and $\mathcal{R}_{quad}$ we use  \eqref{F_bounds}
and \eqref{main_nf_ests_quad}. For the $N[1,T]$ norm of  the remaining cubic expressions
in $u$ and $w$ we use the  inclusions on line
\eqref{easy_S_to_N} and the second algebra estimate from line \eqref{alg1}, as well
as estimate \eqref{main_nf_ests_quad} 
for $\mathcal{N}_{quad}=u-w$ combined with $\rho^{-\frac{1}{2}}\beta_1\in S(\rho)$.
The $L^\infty L^2$ bound for $\rho\partial_\rho$ applied to the cubic terms in $G$ is immediate from 
the $S$ norm bounds for $u$ and $w$.

Now form the quantity $v=w-\mathcal{N}_{cubic}$
which satisfies equation \eqref{main_nf_iden_cubic} and estimates 
\eqref{main_nf_ests_cubic}. Then by applying \eqref{duhamel} to the equation
for $v$ and  using estimates \eqref{main_nf_ests_cubic} for $\mathcal{R}_{cubic}$
and \eqref{G_est} for $G$
in the resulting RHS, we control 
$\lp{\rho^{-\delta}(v,\dot v, \rho^{-1}\partial_y v)(\rho)}{L^\infty_\rho( H^1_y)  }$.
Finally, we can switch back to the original $u$ via 
the estimates for $\mathcal{N}_{cubic}$ on line \eqref{main_nf_ests_cubic}
and the estimates for $\mathcal{N}_{quad}$ on line \eqref{main_nf_ests_quad}. All together this gives:
\begin{equation}
		\lp{\rho^{-\delta}(u,\dot u, \rho^{-1}\partial_y u)(\rho)}{L^\infty_\rho( H^1_y) [1,T] } \ 
		\lesssim \ \lp{(u_0,u_1)}{H^2_y \times H^1_y}+  
		\mathcal{Q}_2\big( \lp{u}{S[1,T]} \big)  \ . \label{main_H1_bound}
\end{equation}
By adding this last bound to \eqref{L2_part} and \eqref{L00_part} above we have \eqref{main_ap_est}
for some $C\geqslant 2$.

\step{4}{Combination}
By adding this last bound to \eqref{L2_part} and \eqref{L00_part} above, we have
for $T_0>1$ sufficiently large the following estimate uniform in $T\geqslant 1$:  
\begin{equation}
		\lp{u}{S[1,T]} \ \lesssim \ 
		 \lp{u}{S[1,T_0]}+
		\lp{(u_0,u_1)}{H^2_y \times H^1_y} 
		+ \mathcal{Q}_\frac{3}{2}( \lp{u}{S[1,T]})  \ . \label{almost_there}
\end{equation}
On the other hand one from \eqref{main_H1_bound} and Sobolev embedding we easily
have:
\begin{equation}
		\lp{u}{S [1,T_0] } \ 
		\lesssim \ T_0^\delta \big( \lp{(u_0,u_1)}{H^2_y \times H^1_y}+  
		\mathcal{Q}_2( \lp{u}{S[1,T]} ) \big) \ . \notag
\end{equation}
Substituting this for the first term on RHS \eqref{almost_there} yields \eqref{main_ap_est}
for some sufficiently large $C\geqslant 2$.

\end{proof}

%-------------------------------------------------------------------------

%\subsection{Proof of the Main A-priori Estimate}

%-------------------------------------------------------------------------
%%%%%%%%%%%%%%%%%%%%%%%%%%%%%% 
%-------------------------------------------------------------------------

\section{Fourier Analytic Setup}\label{Harm_sect}

We assume the reader is familiar with basic Littlewood-Paley theory, which we only
employ in the spatial variable.
We let $P_k$ be smooth cutoff on frequency $\xi\sim 2^k$, and set $P_{<k}=\sum_{j<k} P_j$.
We  use $u_k=P_k u$ and $u_{<k}=P_{<k}u$ etc. Recall  
that  $P_k$ and $P_{<k}$ are uniformly bounded on all $L^p$ spaces. 
As a consequence it is easy to check that $P_k$ and $P_{<k}$ are bounded with respect to 
$S$, $\dot S$, and $N$ norms.

In the $(\rho,y)$ variables there is no action at low frequencies for the evolution of \eqref{eqs}, 
so we use the convention that $P_0=P_{<0}$
whenever we are in these variables. When in the $(\rho,y)$ variables we'll
use subscripts such as $L^p_y$, $H^1_y$ to remind the reader of this.
However, in some places we rescale
back to $x$ variables via $x=\rho y$, in which case we let $P_k$ denote
cutoff on $\xi\sim 2^k$ for all $k\in \mathbb{Z}$. 

Several basic things that will 
be used freely are:
\begin{equation}
		\lp{P_k u }{L^p(\mathbb{R})} \ \lesssim \ 2^{\frac{1}{q}-\frac{1}{p}}\lp{P_k u}{L^q(\mathbb{R})}
		\ , \ \  
		\hbox{and\ \ } \lp{\partial_x P_k u }{L^p(\mathbb{R})} \ \approx \ 2^k 
		\lp{P_k u}{L^p(\mathbb{R})} \ ,
		\qquad \hbox{for all\ \ } 1\leqslant q\leqslant p\leqslant \infty \ . \notag
\end{equation}
A direct consequence of these is the following bound that will be used many times 
in the sequel:
\begin{equation}
		2^{(k-\ln_2(\rho))_+}\lp{P_k u}{L^\infty_y} 
		\!+\! \lp{P_k \dot u}{L^\infty_y}
		\! \lesssim \! \rho^{\delta}2^{-\frac{1}{2}k}
		\lp{u}{S(\rho)}
		\ , \ \ \hbox{and \ \ }
		\lp{P_{ k} 
		u}{L^\infty_y} \! \lesssim \! \rho^{\delta}2^{-\frac{1}{2}k}\lp{u}{\dot S(\rho)}
		\ . \label{L00_sob}
\end{equation}
In particular notice that this  gives $\lp{\rho^{-1}\partial_y u}{L^\infty_y}\lesssim 
\lp{u}{S(\rho)}$, which  will be used freely in the sequel.

Finally, we  make the special notational convention that $D_y=\frac{1}{i\rho}\partial_y$,
which can be thought of as a semi-classical derivative where $h=\rho^{-1}$. 
However, note this assignment is time dependent so one has the 
identity $[\partial_\rho, D_y]=-\rho^{-1}D_y$.
We set $D_x=\frac{1}{i}\partial_x$, $D_\rho=\frac{1}{i}\partial_\rho$, etc as usual.

\subsection{Multilinear Estimates Pt. I: Paraproducts}

We list here a number of elementary product estimates for frequency localized functions.
Our purpose is to set these up in a notational way that will be convenient for later use.
The details are  standard.

\begin{lem}[Basic high-low product estimate]\label{basic_prod_lemma}
Let $k\in\mathbb{Z}$. For test functions $f_k, u^{(i)}_{<k+O(1)}$ and $s_i \in\mathbb{N}$ one has:
\begin{equation}
		\lp{\partial_y^{s_0}
		(f_k \prod_{i=1}^n \partial_y^{s_i}u^{(i)}_{<k+O(1)})
		 }{L^r} \ \lesssim \ \lp{(\partial_y)^{\sum s_i} f_k}{L^{p_0}}
		\prod_{i=1}^n \lp{u^{(i)}_{<k+O(1)}}{L^{p_i}} \ ,
		\quad \hbox{where \ \ } \sum \frac{1}{p_i}= \frac{1}{r}
		\ .  \label{basic_prod_est}
\end{equation}
\end{lem}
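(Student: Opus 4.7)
The plan is to reduce to the standard combination of Leibniz, Hölder, and Bernstein. Since all factors in the product are localized at frequencies $\lesssim 2^k$, each spatial derivative should cost exactly $2^k$ regardless of which factor it hits, so we are free to redistribute derivatives among the factors at the cost of implicit constants.

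First I would expand the outer derivative via the generalized Leibniz rule, writing
\begin{equation}
\partial_y^{s_0}\bigl(f_k \prod_{i=1}^n \partial_y^{s_i}u^{(i)}_{<k+O(1)}\bigr) \ =\ \sum_{a_0+a_1+\cdots+a_n=s_0} \binom{s_0}{a_0,\ldots,a_n} \partial_y^{a_0}f_k \prod_{i=1}^n \partial_y^{s_i+a_i}u^{(i)}_{<k+O(1)} \ . \notag
\end{equation}
The number of terms is bounded by a constant depending only on $s_0$ and $n$, so it suffices to estimate a single summand.

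Next I would apply Hölder's inequality in $L^r$ with exponents $p_0,\ldots,p_n$ satisfying $\sum 1/p_i = 1/r$ to separate the factors, and then apply Bernstein's inequality to absorb all of the extra $\partial_y$'s produced by the Leibniz step. Since $u^{(i)}_{<k+O(1)}$ has Fourier support in $|\xi|\lesssim 2^k$, Bernstein gives $\lp{\partial_y^{s_i+a_i} u^{(i)}_{<k+O(1)}}{L^{p_i}} \lesssim 2^{k(s_i+a_i)} \lp{u^{(i)}_{<k+O(1)}}{L^{p_i}}$, and because $f_k$ has Fourier support in $|\xi|\sim 2^k$ one has both $\lp{\partial_y^{a_0} f_k}{L^{p_0}} \lesssim 2^{ka_0}\lp{f_k}{L^{p_0}}$ and the reverse Bernstein-type bound $2^{k m}\lp{f_k}{L^{p_0}} \approx \lp{\partial_y^m f_k}{L^{p_0}}$ for any $m\in\mathbb{N}$.

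Combining these, the $2^k$ factors line up so that the total power matches the number of derivatives on $f_k$ that appears on the RHS, giving the stated estimate. There is no real obstacle here — the only thing to check is the bookkeeping of derivative counts, which is purely algebraic. The result holds uniformly in $k\in\mathbb{Z}$ and, since none of the steps involves the spatial variable $y$ in any special way, works for the Schwartz functions on $\mathbb{R}$ used throughout the paper.
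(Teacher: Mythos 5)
Your proposal is correct and takes essentially the same route as the paper: Leibniz, then H\"older, then Bernstein to exchange derivatives for powers of $2^k$, finishing with the annulus (two-sided) Bernstein bound on $f_k$ to move all the $2^k$ factors into derivatives of $f_k$. The one small difference is that you apply the ball-type Bernstein inequality $\lp{\partial_y^m u_{<k+O(1)}}{L^{p_i}}\lesssim 2^{km}\lp{u_{<k+O(1)}}{L^{p_i}}$ directly to each low-frequency factor, whereas the paper instead breaks each $u^{(i)}_{<k+O(1)}$ into dyadic pieces, uses annulus Bernstein on each piece, and sums the geometric series (which is why the paper first reduces to $s_i\geqslant 1$ for $i\geqslant 1$). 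Your shortcut is valid and sidesteps that reduction; the two arguments are otherwise the same.
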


From this we derive a number of special cases which will be useful later.

\begin{lem}[High-low $S(\rho)$ and $N(\rho)$ product estimates]\label{used_prod_lem}
Let $k>0$. For test functions $f_k, u^{(i)}_{<k},H_{<k+C}$ and $s_i \in\mathbb{N}$ one has:
\begin{align}
		\lp{\rho^{-1} f_k \prod_{i=1}^n u^{(i)}_{<k}}{S(\rho)}
		\ &\lesssim \ \sum_{j=1}^2
		\lp{(\rho^{-1} f_k, \rho^{-\delta}D_y^j f_k,
		\rho^{-\delta}D_y^{j-1} \dot f_k
		)}{L^\infty_y}\prod_{i=1}^n 
		\lp{u^{(i)}_{<k}}{\dot S(\rho)} \ , \label{SN_mult1}\\
		\lp{\rho^{-1} f_k \prod_{i=1}^n D_y^{s_i} u^{(i)}_{<k}}{N(\rho)}
		\ &\lesssim \ \lp{ (D_y)^{\sum s_i}f_k}{L^\infty_y}\prod_{i=1}^n 
		\lp{u^{(i)}_{<k}}{\td S(\rho)} \ , 
		\qquad \hbox{if \ \ } \sum s_i \geqslant 1 \ , \label{SN_mult2}\\
		\slp{\rho^{-1} f_k H_{<k+C}\!\!\prod_{i=1}^n D_y^{s_i} u^{(i)}_{<k}}{N(\rho)}
		\! &\lesssim \! \rho^{-\delta}
		\slp{\big( \rho^{2\delta-1} (D_y)^{\sum s_i}f_k,
		(D_y)^{1+ \sum s_i  }f_k
		\big) }{L^\infty_y}\! \prod_{i=1}^n 
		\slp{u^{(i)}_{<k}}{\td S(\rho)}\slp{\rho H_{<k+C}}{L^2_y}  . \label{SN_mult3}
\end{align}
In the last two estimates we have set $\td{S}(\rho)=\rho^{\delta}H^1_y\cap L^2_y\cap L^\infty_y$
at fixed time.
\end{lem}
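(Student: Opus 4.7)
All three estimates share a common strategy grounded in Lemma \ref{basic_prod_lemma}: the product $f_k\cdot \prod u^{(i)}_{<k}$ (and the analogous products in \eqref{SN_mult2}, \eqref{SN_mult3}) has $y$-Fourier support contained in $\{|\xi|\lesssim 2^k\}$, since $f_k$ is concentrated at $|\xi|\sim 2^k$ and the remaining factors at $|\xi|\lesssim 2^k$. This, combined with Bernstein's inequality, lets us freely redistribute spatial derivatives between factors via the identifications
\begin{equation*}
\lp{D_y^s g_{<k}}{L^p_y} \ \lesssim \ (\rho^{-1}2^k)^s \lp{g_{<k}}{L^p_y}\ , \qquad
\lp{D_y^s f_k}{L^p_y} \ \approx \ (\rho^{-1}2^k)^s \lp{f_k}{L^p_y}\ ,
\end{equation*}
which together with Leibniz rule and Hölder in $L^p_y$ reduce every product estimate to a bookkeeping exercise.

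For \eqref{SN_mult1}, I would unpack the $S(\rho)$ norm of $v=\rho^{-1}f_k\prod u^{(i)}_{<k}$ into its five basic components---the $\rho^{-\delta}H^1_y$ norms of $v$, $\dot v$, and $D_y v$, together with the $L^\infty_y\cap L^2_y$ norms of $v$ and $\dot v$---and apply Leibniz rule. Spatial derivatives landing on a low-frequency factor are transferred to $f_k$ by Bernstein, accounting for the $D_y^j f_k$ terms on the right; time derivatives landing on $f_k$ yield $\dot f_k$ (possibly with further $D_y$'s), giving the $D_y^{j-1}\dot f_k$ terms. The genuinely delicate case is when $\partial_\rho$ hits a low-frequency factor $u^{(i)}$: I would invoke the infimum decomposition in $\dot S(\rho)$ to write $\dot u^{(i)}=-D_y^2 v^{(i)}+w^{(i)}$ with $\lp{v^{(i)}}{S(\rho)}+\lp{w^{(i)}}{\rho^\delta H^1_y\cap L^\infty\cap L^2}\lesssim \lp{u^{(i)}}{\dot S(\rho)}$, where by the boundedness of $P_{<k}$ on $S$ and $\dot S$ both $v^{(i)}$ and $w^{(i)}$ may be taken spatially low-frequency. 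The $w^{(i)}$ contribution is then handled exactly like any other low-frequency factor, while the $D_y^2 v^{(i)}$ piece carries two extra factors $(\rho^{-1}2^k)^2$ that combine with $\lp{\rho^{-1}f_k}{L^\infty}$ and the gain $\rho^{\delta-1}\leqslant 1$ to reproduce $\rho^{-\delta}\lp{D_y^2 f_k}{L^\infty}$.

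For \eqref{SN_mult2}, the $L^\infty_y\cap L^2_y$ components of the $N(\rho)$ norm follow by Hölder with $f_k$ in $L^\infty$, absorbing every $D_y^{s_i}$ onto $f_k$ via Bernstein to produce $\lp{D_y^{\sum s_i}f_k}{L^\infty}$. The subtlety is the $\rho^{1-\delta}H^1_y$ component, which requires absorbing one extra $\partial_y$; this is where the hypothesis $\sum s_i\geqslant 1$ enters. I would select a factor $u^{(j)}$ with $s_j\geqslant 1$ and keep one of its $D_y$'s as a true $L^2_y$ derivative, bounded by $\rho^{\delta-1}\lp{u^{(j)}}{\td{S}}$ via $\rho^{-\delta}\lp{\partial_y u^{(j)}}{L^2}\leqslant \lp{u^{(j)}}{\td{S}}$; the remaining $s_j-1$ derivatives are pushed to $f_k$ by Bernstein. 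The $\rho^{\delta-1}$ gain is precisely what offsets the $\rho^{-\delta}\cdot 2^k$ cost from applying $\partial_y$ to the frequency-$2^k$ product, and without the hypothesis the balance would fail at large $\rho$. Estimate \eqref{SN_mult3} is a minor elaboration: the factor $H_{<k+C}$ is placed in $L^2_y$ (absorbing its $\rho$ weight as $\lp{\rho H_{<k+C}}{L^2}$) and the $u^{(i)}$ go in $L^\infty_y$ via $\td{S}$; the first right-hand side term $\rho^{2\delta-1}\lp{D_y^S f_k}{L^\infty}$ handles the $\rho(L^\infty\cap L^2)$ and the $L^2$-contribution to $\rho^{1-\delta}H^1_y$, while the second term $\lp{D_y^{1+S}f_k}{L^\infty}$ captures the case where the extra $\partial_y$ in $H^1_y$ lands on $f_k$ itself.

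The main obstacle I anticipate is the $\partial_\rho u^{(i)}$ case of \eqref{SN_mult1}, where the two hidden $\partial_y$'s inside $\rho^{-2}\partial_y^2 v^{(i)}$ must be carefully reallocated via Bernstein onto $f_k$ in each of the five pieces of the $S(\rho)$ norm while tracking the time and space weights; all remaining subcases reduce to routine Hölder--Bernstein bookkeeping.
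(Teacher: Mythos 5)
Your overall strategy matches the paper's: use Lemma \ref{basic_prod_lemma} (Bernstein-style derivative redistribution onto the high-frequency factor $f_k$), handle the $\partial_\rho$-case of \eqref{SN_mult1} via the infimum decomposition $\dot u^{(1)}_{<k} = w_{<k+C} - D_y^2 v_{<k+C}$ from the $\dot S(\rho)$ norm, and for \eqref{SN_mult2} isolate one factor with $s_j\geq 1$ and pay for the extra $\partial_y$ by keeping one derivative on it in $L^2_y$. For \eqref{SN_mult1} the paper actually keeps one $D_y$ on $v_{<k+C}$ and moves only one to $f_k$, whereas you move both; both distributions close because $\rho^{-1}\leq\rho^{-\delta}$, so this is a harmless variation.

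The one genuine gap is in your treatment of \eqref{SN_mult3}. You assert that the term $\rho^{2\delta-1}\lp{D_y^{\sum s_i}f_k}{L^\infty_y}$ "handles the $\rho(L^\infty\cap L^2)$" component, but for the $\rho\lp{\cdot}{L^\infty_y}$ part of $N(\rho)$ you cannot simply place $H_{<k+C}$ in $L^\infty_y$: you only control $\lp{\rho H_{<k+C}}{L^2_y}$. Since the entire product is frequency-localized at scale $\lesssim 2^k$, you must invoke a Bernstein/Sobolev embedding to pass from $L^2_y$ to $L^\infty_y$, which costs a factor $2^{k/2}$. Concretely one gets
\begin{equation*}
\lp{f_k H_{<k+C}\prod_i D_y^{s_i}u^{(i)}_{<k}}{L^\infty_y}\ \lesssim\ 2^{\frac{1}{2}k}\rho^{-1}\lp{D_y^{\sum s_i}f_k}{L^\infty_y}\prod_i\lp{u^{(i)}_{<k}}{L^\infty_y}\,\lp{\rho H_{<k+C}}{L^2_y}\ ,
\end{equation*}
and the prefactor $2^{\frac{1}{2}k}\rho^{-1}$ must then be split into cases: when $2^{\frac{1}{2}k}\leq\rho^\delta$ it is absorbed by $\rho^{\delta-1}=\rho^{-\delta}\cdot\rho^{2\delta-1}$, producing the first RHS term, and when $2^{\frac{1}{2}k}>\rho^\delta$ one uses $2^{\frac{1}{2}k}\rho^{-1}P_k\lesssim\rho^{-\delta}D_y P_k$ to convert the loss into one more derivative on $f_k$, producing the $\lp{D_y^{1+\sum s_i}f_k}{L^\infty_y}$ term. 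This dichotomy is what makes both RHS terms in \eqref{SN_mult3} necessary, and is missing from your sketch; without it the $L^\infty$ estimate fails for $2^k\gg\rho^{2\delta}$.
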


\begin{proof}[Proof of Lemma \ref{basic_prod_lemma}]
By the Leibniz rule and a possible redefinition of the $s_i$  it suffices to consider
the product $\partial_y^{s_0}f_k\cdot \prod_{i=1}^n \partial_y^{s_i}u^{(i)}_{<k+O(1)}$ instead.
By H\"older's inequality we may assume each of the $s_i\geqslant 1$, for $i=1,\ldots,n$.
Breaking the $u^{(i)}_{<k+O(1)}$ into dyadic frequencies and using H\"older's
inequality   we have:
\begin{align}
		\lp{\partial_y^{s_0}f_k\cdot 
		\prod_{i=1}^n \partial_y^{s_i}u^{(i)}_{<k+O(1)})}{L^r} \ &\lesssim \
		\sum_{k_i<k+O(1)}2^{\sum_{i=1}^n s_i k_i}\lp{\partial_y^{s_0}f_k}{L^{p_0}}
		\prod_{i=1}^n\lp{P_{k_i} u^{(i)}_{<k+O(1)}}{L^{p_i}} \ , \notag\\
		&\lesssim \ 2^{(\sum_{i=1}^n s_i)k}  \lp{\partial_y^{s_0} f_k}{L^{p_0}}
		\prod_{i=1}^n \lp{u^{(i)}_{<k+O(1)}}{L^{p_i}} \ . \notag
\end{align}
The proof concludes by trading dyadic weights for derivatives.
\end{proof}

\begin{proof}[Proof of estimate \eqref{SN_mult1}]
First consider the portion of the $S(\rho)$ norm which does not contain $\partial_\rho$ 
derivatives. Then the $L^2_y \cap L_y^\infty$ and $\rho^{\delta}H^1_y$
estimates are all an immediate consequence of \eqref{basic_prod_est} by always
putting $f_k$ and its derivatives in $L^\infty_y$, one of the $u^{(i)}_{<k}$ in either $L^\infty_y$ or $L^2_y$,
and the rest of the $u^{(i)}_{<k}$ in $L^\infty_y$.

Now consider the  portion of  $S(\rho)$ containing $\partial_\rho$
derivatives. If $\partial_\rho$
lands on $f_k$ then the proof is similar to what was just discussed.
If the $\partial_\rho$ lands on a $u^{(i)}_{<k}$ assume WLOG it is
$u^{(1)}_{<k}$. To use the structure of the $\dot S$ space
we write $\dot{u}^{(1)}_{<k}=P_{<k+C}(\dot{u}^{(1)}_{<k} +D_y^2 v)- D_y^2 v_{<k+C}=w_{<k+C}
- D_y^2 v_{<k+C}$ where $\lp{w_{<k+C}}{\rho^\delta H^1_y\cap L^\infty_y \cap L^2_y}
+ \lp{v_{<k+C}}{S(\rho)}\lesssim \lp{u^{(1)}_{<k}}{\dot{S}(\rho)}$.
Therefore, modulo bounds which are again of the form just discussed we need to prove:
\begin{equation}
		\lp{\rho^{-1}f_k D_y^2 v_{<k+C}\prod_{i=2}^n u^{(i)}_{<k}}
		{\rho^\delta H^1_y\cap L^\infty_y \cap L^2_y} \! \lesssim \! 
		 \rho^{-\delta} \lp{D_y f_k}{L^\infty_y}\lp{v_{<k+C}}{S(\rho)}
		 \prod_{i=2}^n \lp{u^{(i)}_{<k}}{\dot S(\rho)}
		\ . \notag
\end{equation}
In all cases the desired estimate again follows from \eqref{basic_prod_est}
by putting $D_y f_k$ in $L^\infty_y$, $D_y v_{<k+C}$ in $L^\infty_y$ or $L^2_y$,
$D_y^2 v_{<k+C}$ in $L^2_y$,
and the rest of the $u^{(i)}_{<k}$ in $L^\infty_y$.
\end{proof}

\begin{proof}[Proof of estimate \eqref{SN_mult2}]
The estimate for the $L^2_y\cap L^\infty_y$ part of the $N(\rho)$ norm follows at once from
\eqref{basic_prod_est}. In the case of the $\dot H^1_y$ portion of the norm we 
need to use the condition that $\sum s_i\geqslant 1$. WLOG assume that 
$s_1\geqslant 1$. Then from \eqref{basic_prod_est} we have:
\begin{equation}
		\rho^{-\delta}
		\lp{ \partial_y (  f_k \prod_{i=1}^n D_y^{s_i} u^{(i)}_{<k} ) }{L^2_y}
		\ \lesssim \ \lp{ (D_y)^{\sum s_i}f_k}{L^\infty_y}
		\rho^{-\delta}\lp{\partial_y u^{(1)}_{<k}}{L^2_y}
		\prod_{i=2}^n 
		\lp{u^{(i)}_{<k}}{L^\infty_y} \ , \notag
\end{equation}
which is sufficient to achieve RHS \eqref{SN_mult2}.  
\end{proof}

\begin{proof}[Proof of estimate \eqref{SN_mult3}]
For the $L^2_y$ and $\rho^\delta H^1_y$ portions of the $N(\rho)$ norms this
again boils down to simple manipulations involving \eqref{basic_prod_est} and
the details are left to the reader. 

To prove \eqref{SN_mult3} for the $L^\infty$ portion of $N(\rho)$ we combine
\eqref{basic_prod_est} with a Sobolev embedding which gives:
\begin{equation}
		\lp{ f_k H_{<k+C}\prod_{i=1}^n D_y^{s_i} u^{(i)}_{<k}}{L^\infty_y}
		\ \lesssim \ 2^{\frac{1}{2}k}\rho^{-1}\lp{(D_y)^{\sum s_i} f_k}{L^\infty_y}
		\prod_{i=1}^n \lp{u^{(i)}_{<k}}{L^\infty_y} \lp{\rho H_{<k+C}}{L^2_y} \ . \notag
\end{equation}
Then RHS above is bounded in terms of RHS \eqref{SN_mult3} by splitting into
cases based on $2^{\frac{1}{2}k}\leqslant \rho^\delta$ and vice versa.
In the latter case we use $2^{\frac{1}{2}k}\rho^{-1}P_k\lesssim \rho^{-\delta} D_y P_k$.

\end{proof}

%--------------------------------------------------------------------------

\subsection{Proof of the Basic $S,\dot S$ Space Properties}

The purpose of this subsection is to prove Lemma \ref{basic_norm_lemma}.
The proofs of both estimates on line \eqref{alg1},  the first estimate
on line \eqref{alg2}, the second estimate on line \eqref{dot_S_bnds}, and 
the estimate on line \eqref{duhamel} are all elementary and left to the 
reader. Here we'll handle the second estimate on line \eqref{alg2}
and the first estimate on line  \eqref{dot_S_bnds}.

\begin{proof}[Proof of the estimate 
$\lp{\langle D_y \rangle^{-1} u}{S(\rho)} \lesssim 
\lp{u}{\dot{S}(\rho)}$]

The main step is to show the operator $\langle D_y\rangle^{-1}$ is bounded on 
all $L^p_y$ spaces. After rescaling this is the same as showing that 
$\langle D_x\rangle^{-1}$ is bounded on all $L^p$ spaces which easily follows by showing
the associated convolution kernel is in $L^1$ (it has only a logarithmic singularity at the
origin and is rapidly decaying). This shows that
$\lp{\langle D_y\rangle^{-1} u }{\rho^\delta H^1_y \cap L^2_y\cap L^\infty_y}
\lesssim \lp{  u }{\rho^\delta H^1_y \cap L^2_y\cap L^\infty_y}$.
The estimate for $D_y\langle D_y \rangle^{-1}u $ in $\rho^{\delta}H^1_y$ 
follows directly from Plancherel.

It remains to bound $\partial_\rho \langle D_y \rangle^{-1} u$ 
in $\rho^\delta H^1_y \cap L^2_y\cap L^\infty_y$. Modulo 
a commutator which produces terms like what we have already bounded in 
the previous paragraph, it suffices to consider $\langle D_y\rangle^{-1}\dot u$.
Choose
a $v$ with $\lp{\dot u + D_y^2 v}{\rho^\delta H^1_y \cap L^2_y\cap L^\infty_y} 
+ \lp{v}{S(\rho)}\lesssim \lp{u}{\dot S(\rho)}$. Then we have:
\begin{equation}
		\lp{\langle D_y \rangle^{-1} \dot u }{\rho^\delta H^1_y \cap 
		L^2_y\cap L^\infty_y} \ \lesssim \ 
		\lp{\langle D_y \rangle^{-1} (\dot u + D_y^2 v)}
		{\rho^\delta H^1_y \cap L^2_y\cap L^\infty_y} + 
		\lp{\langle D_y \rangle^{-1} D_y^2 v}
		{\rho^\delta H^1_y \cap L^2_y\cap L^\infty_y} \ . \notag
\end{equation}
The first term on the RHS above is of the form of the previous paragraph. To handle the 
second we split $v=v_{<\ln_2(\rho)}+ v_{\geqslant \ln_2(\rho)}$. For $v_{<\ln_2(\rho)}$
use $D_y^2 P_{<\ln_s(\rho)}$ is bounded on any $L^p_y$ space. For $v_{\geqslant \ln_2(\rho)}$
we use the  embedding:
\begin{equation}
		P_{\geqslant \ln_2(\rho)}:\, \rho^{\delta} H^1_y \ \longrightarrow \
		L^2_y\cap L^\infty_y  \ , \label{high_embed}
\end{equation}
(which follows from \eqref{L00_sob}), and then the fact that $D_y \langle D_y\rangle^{-1}$
is bounded on $\rho^\delta H^1_y$.
\end{proof}

\begin{proof}[Proof of the estimate $\lp{uv}{\dot S(\rho)} \lesssim  
\lp{u}{S(\rho)}\lp{v}{\dot S(\rho)} $]
The bound for the portion of $\dot S(\rho)$ which does not contain $\dot v$ 
follows directly from the definition of the norms, the Leibniz rule,
and H\"older's inequality. Here we focus attention on showing there exists a 
$w\in S(\rho)$ such that:
\begin{equation}
		\lp{u\dot v + D_y^2 w}{\rho^{\delta} H^1_y\cap L^2_y\cap L^\infty_y}
		+ \lp{w}{S(\rho)}
		\ \lesssim \ \lp{u}{S(\rho)}\lp{v}{\dot S(\rho)} \ . \label{udotv_est}
\end{equation}
Let $\td w\in S(\rho)$ such that 
$\lp{\dot v + D_y^2 \td w}{\rho^{\delta} H^1_y\cap L^2_y\cap L^\infty_y}
+ \lp{\td w}{S(\rho)}\lesssim 
\lp{v}{\dot S(\rho)}$, then set $w=\sum_{k\geqslant \ln_2(\rho)}u_{<k-C}\td{w}_k$.
Then $\lp{w}{S(\rho)}\lesssim \lp{u}{S(\rho)}\lp{\td w}{S(\rho)}$ follows
easily by using the product estimate \eqref{basic_prod_est} and 
orthogonality to gain an $\rho^{\delta }H^1_y$ bound and then using 
\eqref{high_embed}.  
Now decompose the product $u\dot v$ into a number of 
frequency interactions and estimate each term separately.

\case{1}{The  product $u \dot v_{<\ln_2(\rho)}$}
Here we compute:
\begin{equation}
		\lp{u\dot v_{<\ln_2(\rho)}  }{\rho^{\delta} H^1_y\cap L^2_y\cap L^\infty_y}
		\ \lesssim \ \lp{u(\dot v_{<\ln_2(\rho)} 
		+ D_y^2 \td w_{<\ln_2(\rho)})}{\rho^{\delta} H^1_y\cap L^2_y\cap L^\infty_y}
		+ \lp{u D_y^2 \td w_{<\ln_2(\rho)}}{\rho^{\delta} H^1_y\cap L^2_y\cap L^\infty_y}
		 \ , \notag
\end{equation}
and use the fact that $\rho^{\delta} H^1_y\cap L^2_y\cap L^\infty_y$ is an algebra followed
by the boundedness of $D_y^2 P_{<\ln_2(\rho)}$ on $S(\rho)$ to achieve RHS \eqref{udotv_est}.
		
\case{2}{ The product $\sum_{k\geqslant \ln_2(\rho)} u_{<k-C}\dot v_k$}
To bound this use orthogonality of the sum and the embedding \eqref{high_embed},
which reduces matters to bounding the RHS of:
\begin{equation}
		\lp{u_{<k-C}\dot v_k + D_y^2( u_{<k-C}\td w_k)}{H^1_y}  \ \lesssim \ 
		\lp{u_{<k-C}( \dot v_k + D_y^2 \td w_k)}{H^1_y} 
		+ \lp{ D_y^2( u_{<k-C}\td w_k) -u_{<k-C}D_y^2 \td w_k }{H^1_y} \ . \notag
\end{equation}
The first term on the RHS is estimated by the Leibniz rule, while the second uses the fact that
one derivative transfers to the low frequency term. Note that
$\lp{D_y P_{<k-C}u}{L^\infty}\lesssim \lp{u}{S(\rho)}$
follows easily from \eqref{L00_sob}.

\case{3}{The product $\sum_{k\geqslant \ln_2(\rho)} u_{\geqslant k-C}\dot v_k$}
For this term we first use the inequality:
\begin{equation}
		\sum_{k\geqslant \ln_2(\rho)} \lp{u_{\geqslant k-C}\dot v_k}{H^1_y}
		\ \lesssim \ \sum_{k\geqslant \ln_2(\rho)} \lp{u_{\geqslant k-C}( \dot v_k
		+ D_y^2 \td w_k)}{H^1_y}
		+ \sum_{k\geqslant \ln_2(\rho)} \lp{u_{\geqslant k-C} D_y^2 \td w_k}{H^1_y}
		\ . \label{HL_dS_bound}
\end{equation}
Using paraproduct estimates similar to \eqref{basic_prod_est} and orthogonality
we have the fixed frequency bounds:
\begin{equation}
		\slp{u_{\geqslant k-C}( \dot v_k+ D_y^2 \td w_k)}{H^1_y}  \lesssim 
		\slp{u_{\geqslant k-C} }{H^1_y}
		\slp{  \dot v_k+ D_y^2 \td w_k}{L^\infty_y} \ , \quad
		\slp{u_{\geqslant k-C} D_y^2 \td w_k}{H^1_y}  \lesssim 
		\slp{D_y u_{\geqslant k-C}  }{H^1_y}\lp{  D_y \td w_k}{L^\infty_y} \ . \notag
\end{equation}
Using bounds similar \eqref{L00_sob} we gain a factor of $\rho^\delta 2^{-\frac{1}{2}k}$
from each RHS $L^\infty_y$ term. Therefore:
\begin{equation}
		LHS\eqref{HL_dS_bound} \ \lesssim \ 
		\sum_{k\geqslant \ln_2(\rho)} \rho^{2\delta}2^{-\frac{1}{2}k}\lp{u}{S(\rho)}\lp{v}{\dot S(\rho)}
		\ , \notag
\end{equation}
which suffices to give a uniform $H^1_y$ bound in this case, and in particular one also gains  
control of $L^2_y\cap L^\infty_y$.
\end{proof}

 \subsection{Some Estimates for Coefficients}

We record here a number of asymptotic calculations for  nonlinear
coefficients such as $\beta_1(\rho y)$. 

\begin{lem}[An estimate for the cubic coefficient]
For functions of the form $\varphi(\rho y)$ with $\varphi\in \mathcal{S}$, and 
for $(n,m)\in \mathbb{N}\times \mathbb{Z}$,
we have for $k>0$ the uniform bound: 
\begin{equation}
		\lp{\partial_\rho^n D_y^m P_k \varphi}{L^p_y} \ \leqslant  \ 
		C_{n,m,N}(\lp{ \varphi}{\mathcal{S}})
		\rho^{-n-m-1}  2^{(m+1-\frac{1}{p})k}2^{-N(k-\ln_2(\rho))_+}
		 \ , \label{P_k_beta}
\end{equation}
\end{lem}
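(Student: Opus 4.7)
The plan is to perform direct Fourier analysis in $y$, use the commutations $[P_k,\partial_\rho]=[P_k,D_y]=0$ (both follow because $P_k$ is a Fourier multiplier in $y$ independent of $\rho$, and $D_y=(i\rho)^{-1}\partial_y$ is also one), and reduce the problem to a standard dyadic kernel estimate on a Schwartz function after rescaling the frequency variable by $\rho$. The key observation is that the substitution $\xi=\rho\eta$ in the Fourier inversion formula turns $P_k\varphi(\rho\cdot)(y)$ into a rescaled Littlewood--Paley projection of $\varphi$ at frequency $2^j=2^k/\rho$, evaluated at the point $\rho y$. This repackages the entire $\rho$-dependence into the single parameter $j:=k-\log_2\rho$ together with an outer change of variable.

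I would first treat the case $n=0$. After the substitution $\xi=\rho\eta$ one has
\begin{equation}
P_k D_y^m\varphi(\rho\cdot)(y)\ =\ g_{j,m}(\rho y),\qquad g_{j,m}(x)\ =\ \frac{1}{2\pi}\int e^{ix\eta}\tilde\psi_j(\eta)\,\eta^m\,\hat\varphi(\eta)\,d\eta,\notag
\end{equation}
where $\tilde\psi_j(\eta):=\psi(\rho\eta/2^k)$ is supported in $|\eta|\sim 2^j$. The symbol has size $\lesssim 2^{jm}$ on an interval of width $2^j$, satisfies $|\partial_\eta^N[\tilde\psi_j(\eta)\eta^m\hat\varphi(\eta)]|\lesssim 2^{j(m-N)}$, and for $j>0$ picks up an additional factor $2^{-Nj}$ from the Schwartz decay of $\hat\varphi$. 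Integrating by parts $N$ times against $e^{ix\eta}$ then yields the kernel bound
\begin{equation}
|g_{j,m}(x)|\ \lesssim_{N}\ C_N(\slp{\varphi}{\mathcal{S}})\,2^{j(m+1)}(1+2^j|x|)^{-N}\,(1+2^{j_+})^{-N}.\notag
\end{equation}
Changing variables back in $y$ gives $\lp{g_{j,m}(\rho\cdot)}{L^p_y}=\rho^{-1/p}\lp{g_{j,m}}{L^p}\lesssim \rho^{-1/p}2^{j(m+1-1/p)}(1+2^{j_+})^{-N}$, and substituting $2^j=2^k/\rho$ converts this into the desired bound \eqref{P_k_beta} for $n=0$.

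For general $n\geq 1$, I would apply the chain and Leibniz rules to $g_{j,m}(\rho y)$. Each application of $\partial_\rho$ either hits the argument $\rho y$, producing a factor $y=\rho^{-1}(\rho y)$ together with one extra spatial derivative on $g_{j,m}$ (which amounts to replacing $m$ by $m+1$ in the symbol); or it hits the cutoff $\tilde\psi_j$ inside the symbol, producing a factor $\rho^{-1}\cdot\eta/2^j\sim\rho^{-1}$ times a symbol of the same class. In either case one picks up exactly one extra factor of $\rho^{-1}$ per $\partial_\rho$ derivative, while polynomial growth from repeated $(\rho y)^a$ factors is absorbed into the spatial decay $(1+2^j|x|)^{-N}$ by choosing $N$ large enough. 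Iterating $n$ times and then taking the $L^p_y$ norm yields the full estimate.

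The main, and fairly modest, obstacle is bookkeeping the various error terms coming from differentiating the $\rho$-dependent cutoff $\tilde\psi_j$ alongside the polynomial factors in $\rho y$, verifying that the powers of $\rho$ and $2^k$ combine exactly as claimed. The case $m<0$ is not special here because $k>0$ keeps the support away from $\eta=0$, so the symbol $\tilde\psi_j(\eta)\eta^m$ remains smooth on its support and the uniform symbol bound $|\partial_\eta^N[\tilde\psi_j(\eta)\eta^m\hat\varphi(\eta)]|\lesssim 2^{j(m-N)}$ holds for every $m\in\mathbb{Z}$.
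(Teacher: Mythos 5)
Your proof is correct and follows essentially the same strategy as the paper's: rescale the Fourier variable to $x=\rho y$ (so the cutoff lives at frequency $2^{k-\ln_2\rho}$), use the Schwartz decay of $\hat\varphi$ and a dyadic kernel estimate to obtain the $L^p$ bound with the $2^{-N(k-\ln_2\rho)_+}$ gain, and propagate $\partial_\rho$ derivatives by showing each one produces a clean factor of $\rho^{-1}$. The only cosmetic difference is that the paper keeps $P_k$ unrescaled (so $\partial_\rho$ commutes with it, slightly simplifying the induction) and then invokes Bernstein at the end, whereas you rescale first and compute the kernel directly.
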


\begin{proof}[Proof of lemma]
We have $D_y^m P_k \varphi (\rho y)=(2^k/\rho)^m \td{P}_k {\varphi}(\rho y)$ where 
the symbol of $\td{P}_k$ has  similar support and smoothness properties as $P_k$. 
Differentiation by $\partial_\rho$ 
produces $ \partial_\rho D_y^m P_k \varphi (\rho y)= 2^{mk}\rho^{-m-1}\td{P}_k \td{\varphi}$
where $\td{\varphi}(x)=-m\varphi(x) +x\varphi'(x)$. Thus, by induction one has
$\partial_\rho^n D_y^m \varphi (\rho y)=\rho^{-n} (2^k/\rho)^m \td{P}_k \psi(\rho y)$
for some $\psi\in \mathcal{S}$ and  
$\td{P}_k$ with similar  properties as $P_k$. 
Next, we have from change of variable
and the $L^1_x\to L^p_x$
Bernstein's inequality:
\begin{align}
		\lp{\td P_k \psi(\rho y)}{L^p_y} \ = \ \rho^{-\frac{1}{p}}\lp{\td P_{k-\ln_2(\rho)}\psi}{L^p_x}
		\ &\lesssim \ \rho^{-\frac{1}{p}} 2^{-N(k-\ln_2(\rho))_+}
		\lp{\td P_{k-\ln_2(\rho)}\langle D_x \rangle^N \psi}{L^p_x} \ , \notag\\
		&\leqslant \ C_N(\lp{\psi}{\mathcal{S}})\rho^{-1} 2^{(1-\frac{1}{p})k}2^{-N(k-\ln_2(\rho))_+}
		\ . \notag
\end{align}
\end{proof}

\begin{lem}[A stationary phase calculation]\label{SP_lem}
Let $\lambda>1$ and let $h_\lambda(t)$ be a smooth function supported where $t\approx \lambda$ 
with $|\partial_t^n h_\lambda|\lesssim \lambda^{-n}$. 
Let 
$\varphi \in C_0^\infty$ 
%with the property that $\varphi(\xi)\equiv 0$ when $|\xi|<c$ for some $c>0$
and set:
\begin{equation}
		I_\pm(t,x) \ = \ \int_1^t\!\!\! \int e^{i(\pm(t-s)\langle \xi \rangle + 3s
		+ x\xi )} h_\lambda(s)\varphi(\xi)d\xi ds
		\ . \notag
\end{equation}
Then for fixed $\epsilon>0$  sufficiently small one has the asymptotic estimate:
\begin{equation}
		|D_t^n D_x^m I_\pm (t,x)| \ \leqslant \ C_{n,m,N}(\varphi ) \big(
		\chi_{[-1+\epsilon,1-\epsilon]}( x/t) + \langle(t,x)\rangle^{-N}
		\big)\ .
		\label{main_SP_est}
\end{equation}
Here the estimates are uniform over any collection of functions $h_\lambda$
which satisfy the above derivative bounds with a uniform constant.
\end{lem}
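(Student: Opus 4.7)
I would treat the two sign choices in parallel, noting that the $+$ case is the harder one: its phase $\Phi(s,\xi;t,x)=\pm(t-s)\langle\xi\rangle+3s+x\xi$ admits interior stationary points where $\partial_s\Phi=3-\langle\xi\rangle=0$, i.e.\ $\xi=\pm 2\sqrt{2}$, whereas in the $-$ case $\partial_s\Phi=\langle\xi\rangle+3\ge 4$ is uniformly bounded below. With $R>0$ bounding $\mathrm{supp}\,\varphi$ and $\mu:=R/\langle R\rangle<1$, the estimate is proved for any fixed $\epsilon\in(0,1-\mu)$, which is the precise ``sufficiently small'' condition (naturally depending on $\varphi$, consistent with the dependence of the implicit constants on $\varphi$). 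I would first reduce to the case $n=m=0$: differentiating under the integral sign, $D_x$ and $D_t$ insert the bounded factors $\xi$ and $\pm\langle\xi\rangle$ into the profile (absorbed into a new $C_0^\infty$ function), while $\partial_t$ acting on the upper limit $s=t$ produces the boundary term $h_\lambda(t)e^{3it}\widehat{\varphi}(-x)$, which is supported in $t\approx\lambda$ and rapidly decaying in $\langle x\rangle$, hence already accommodated by the RHS of \eqref{main_SP_est}.

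For the uniform $O(1)$ bound on $I_\pm$, which accounts for the $\chi$ term inside the cone, I would treat the $(s,\xi)$-integral as a 2D oscillatory integral. In the $+$ case, critical points occur at $\xi_*=\pm 2\sqrt{2}$, $s_*=t\mp 3x/(2\sqrt{2})$, with Hessian satisfying $\partial_{ss}\Phi=0$, $\partial_{\xi\xi}\Phi=(t-s_*)/\langle\xi_*\rangle^3=(t-s_*)/27$, and $\partial_{s\xi}\Phi=\mp 2\sqrt{2}/3$, giving determinant $-8/9$; these are non-degenerate saddles of signature $(1,1)$, and the standard 2D stationary phase formula yields an $O(1)$ contribution whenever $(s_*,\xi_*)\in\mathrm{supp}\,h_\lambda\times\mathrm{supp}\,\varphi$, with iterated IBP providing an even better bound when the critical point lies outside this region. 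The $-$ case is immediate: a single integration by parts in $s$ using $\partial_s\Phi\ge 4$ gives $|I_-|\lesssim 1$.

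For the exterior decay on $|x/t|>1-\epsilon$, I would iterate the IBP operator $L^T=-\partial_\xi\bigl((i\partial_\xi\Phi)^{-1}\,\cdot\,\bigr)$ a total of $N$ times (no boundary contributions since $\varphi\in C_0^\infty$). The crux is the lower bound $|\partial_\xi\Phi|=|\pm(t-s)\xi/\langle\xi\rangle+x|\gtrsim\langle(t,x)\rangle$ on this region: since $|(t-s)\xi/\langle\xi\rangle|\le\mu t$, one has $|\partial_\xi\Phi|\ge|x|-\mu t\ge (1-\epsilon-\mu)t$ in the subregion $|x|\le 2\mu t$, and $|\partial_\xi\Phi|\ge|x|/2$ in $|x|\ge 2\mu t$; in either case $|\partial_\xi\Phi|\gtrsim\langle(t,x)\rangle$. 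Combined with the bound $|\partial_\xi^2\Phi|=(t-s)/\langle\xi\rangle^3=O(t)$ (and the analogous polynomial-in-$t$ bounds on higher $\xi$-derivatives of $\Phi$), each application of $L^T$ supplies a factor of $\langle(t,x)\rangle^{-1}$, yielding $|I_\pm|\lesssim\lambda\,\langle(t,x)\rangle^{-N}$. Since $I_\pm$ vanishes unless $t\gtrsim\lambda$ (as $h_\lambda$ is supported in $s\approx\lambda$), we have $\langle(t,x)\rangle\gtrsim\lambda$ on the support, absorbing the stray $\lambda$ to produce $\langle(t,x)\rangle^{-(N-1)}$ decay for any $N$. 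The principal obstacle is securing this lower bound on $|\partial_\xi\Phi|$, which is exactly what forces $\epsilon<1-\mu$ and underlies the ``sufficiently small $\epsilon$'' hypothesis.
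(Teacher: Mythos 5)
Your reduction to $n=m=0$ and your exterior ($|x/t|>1-\epsilon$) estimate via the lower bound $|\partial_\xi\Phi|\gtrsim\langle(t,x)\rangle$ are correct and in fact match the paper's Case 1 in spirit; the $I_-$ case is also handled the same way (one IBP in $s$). The gap is in your interior bound for $I_+$, where you invoke ``the standard 2D stationary phase formula'' after computing the Hessian determinant $-8/9$. That invocation does not actually apply here, for three reasons that the paper's proof is specifically organized to address. First, the amplitude is effectively $\mathbf{1}_{[1,t]}(s)\,h_\lambda(s)\varphi(\xi)$, which is not smooth at $s=t$ whenever $t$ lies in the support of $h_\lambda$; since the interior critical $s$-value is $s_\pm = t\pm\frac{3}{\sqrt{8}}x$, the critical point sits within $O(\langle x\rangle)$ of that boundary exactly when $x$ is moderate, and the problematic scale is $\langle x\rangle^{1/2}$. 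Second, while $\det\nabla^2\Phi=-8/9=O(1)$, the Hessian is anisotropic: $\partial_{ss}\Phi\equiv 0$, $\partial_{\xi\xi}\Phi=(t-s)/\langle\xi\rangle^3$ and all higher $\xi$-derivatives of $\Phi$ grow like $|t-s|\sim\langle x\rangle$, so the quadratic model breaks down at unit scale and one must rescale $s'=\langle x\rangle^{-1/2}(s-s_\pm)$, $\xi'=\langle x\rangle^{1/2}(\xi-\xi_\pm)$ before the cubic remainder is uniformly small. Third, once one rescales, the $s'$-extent of $\mathrm{supp}\,h_\lambda$ is $\lambda\langle x\rangle^{-1/2}$, which is $\ll 1$ when $\lambda\lesssim\langle x\rangle^{1/2}$ (a regime allowed inside the cone for small $\epsilon$); in that regime the 2D nondegenerate formula does not give the $O(1)$ bound on its own, and your fallback of ``iterated IBP when the critical point lies outside'' does not cover the critical point being \emph{near but not outside} the domain.

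The paper handles exactly these issues by inserting a cutoff $\chi^{sp}$ that restricts to $|s-s_\pm|\ll\langle x\rangle$ and $|s-1|,\ |t-s|\gtrsim\langle x\rangle^{1/2}$, and then treating separately: (3a.1) $|\xi-\xi_\pm|\gtrsim 1$ by IBP in $s$; (3a.2) $|s-s_\pm|\gtrsim\langle x\rangle$ by two IBPs in $\xi$ (gaining an integrable $\langle s-s_\pm\rangle^{-2}$); (3a.3) the near-boundary region via a \emph{one-dimensional} stationary phase in $\xi$ giving $\langle x\rangle^{-1/2}$ per unit $s$, integrated over an $s$-set of size $\langle x\rangle^{1/2}$; and (3b) the core $\mathcal{A}^{sp}$ region, where only after the $\langle x\rangle^{1/2}$ rescaling is the phase of the form $z_1^2-z_2^2$ plus a uniformly small remainder, with an amplitude that has $O(1)$ derivatives. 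To repair your argument you would need to reproduce this decomposition (or an equivalent one): in particular you must supply the $\langle x\rangle^{1/2}$-rescaling to control the cubic error, and you must give a separate 1D-stationary-phase argument in the $\langle x\rangle^{1/2}$-neighborhood of the endpoints $s=1$ and $s=t$, where the 2D formula is unavailable.
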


 \begin{proof}[Proof of lemma]
Note first that we may reduce to the case $n=m=0$, because differentiation by 
$D_x$ produces an integral of the same form, while differentiation by $D_t$
produces an integral of the same form or a boundary term that is
$e^{i3t}h_{\lambda}(t)\hat \varphi(-x)$ which obviously satisfies estimate 
\eqref{main_SP_est}.

Next, we dispense with two easy special cases:

\case{1}{The region $|x/t|>1-\epsilon$ for $0< \epsilon\ll 1$}
If $\epsilon\ll 1$ is sufficiently small then for either $I_\pm$ in this region
one has $|\partial_\xi \psi|\approx |x|$ where $\psi$ denotes the phase.
Thus $|I_\pm| = O(\langle x\rangle^{-N})$ follows at once from integration by parts.

\case{2}{The integral $I_-$ where $|x/t|<1$}
In this case the phase is non-stationary so one gets a uniform bound via
integration by parts on time with respect to $s$.

Now we move to the main case:

\case{3}{The integral $I_+$ where $|x/t|<1$}
Writing the phase as $\psi(s,\xi;t,x)$ we have:
\begin{equation}
		\psi_s \ = \ -\langle \xi\rangle +3
		\ , \qquad
		\psi_\xi  \ = \ \langle \xi \rangle^{-1}( \xi (t-s) +\langle \xi \rangle x)
		\ . \notag
\end{equation}
So the stationary points 
are  $\xi_\pm =\pm \sqrt{8}$ and  $s_\pm=t\pm \frac{3}{\sqrt{8}}x$.
A rough form of the Taylor expansion of $\psi$ close to its stationary points is:
\begin{equation}
		\psi(s,\xi;t,x) \ = \ \psi(s_\pm,\xi_\pm; t,x) + (s-s_\pm)(\xi-\xi_\pm)q_1(\xi)
		+ x (\xi-\xi_\pm)^2 q_2(\xi) \ , \label{rough_taylor_pase}
\end{equation}
where $|q_2|\approx |q_1 |\approx 1$ are smooth functions of $\xi$.
To take advantage of this
we break the integral up into two pieces $I_+=I_+^{sp}+ I_+^{rem}$ where
$I_+^{sp}$ denotes the $I_+$ integral with integrand multiplied by a cutoff
 $\chi^{sp}(s,\xi;t,x)$ which is supported on the region:
\begin{equation}
		\mathcal{A}^{sp} \ = \ 
		\{|\xi-\xi_\pm| \ll 1\}\ \cap\  \{ |s-s_\pm| \ll \langle x \rangle \}\ \cap\ 
		\{ |s-1|   \gtrsim \langle x\rangle^\frac{1}{2} \}\cap 
		\{ |t-s| \gtrsim \langle x\rangle^\frac{1}{2} \}  \ , \notag %\label{sp_region}
\end{equation}
and chosen in such a way that $|\partial_\xi^a \partial_s^b \chi^{sp}|
\lesssim \langle x\rangle^{-\frac{1}{2}b}$. The analysis proceeds via
 further sub-cases:

\case{3a.1}{The estimate for $I_+^{rem}$ when $|\xi-\xi_\pm|\gtrsim 1$}
This is similar to the estimate for $I_-$ above, just integrate by parts one time with respect
to $\partial_s$.

\case{3a.2}{The estimate for $I_+^{rem}$ when $|\xi-\xi_\pm|\ll 1$ and $|s-s_\pm|\gtrsim \langle
x \rangle$}
Here the rough Taylor expansion \eqref{rough_taylor_pase} 
shows that the differentiated phase $\psi_\xi$ is $O(s-s_\pm)$.
Therefore,  integrating by parts twice with respect to $\xi$ yields an integrable
weight of the form  $\langle s-s_\pm \rangle^{-2}$.

\case{3a.3}{The estimate for $I_+^{rem}$ when $|\xi-\xi_\pm|\ll 1$ and $|s-s_\pm|\ll \langle
x \rangle$, and either $|s-1|\ll \langle x\rangle^\frac{1}{2}$ or $|t-s|\ll \langle x\rangle^\frac{1}{2}$}
 A standard stationary phase calculation in one 
variable shows:
\begin{equation}
		\big|
		\int e^{i(\pm(t-s)\langle \xi \rangle + 3s
		+ x\xi )} 
		(1-\chi^{sf}(s,\xi;t,x))
		h_\lambda(s)\varphi(\xi)d\xi 
		\big| \ \lesssim \ \langle x\rangle^{-\frac{1}{2}} \ , \notag
\end{equation} 
which is sufficient to integrate over this region.

\case{3b}{The estimate for $I_+^{sp}$}
The form of the phase on line \eqref{rough_taylor_pase} shows that if we make the
change of variables $s'=\langle x\rangle^{-\frac{1}{2}} (s-s_\pm)$ and
$\xi'=\langle x \rangle^\frac{1}{2}(\xi-\xi_\pm)$ then in the new variables
we have that the hessian $\psi''$ is $O(1)$ and non-degenerate at $s'=\xi'=0$,
and the phase may be written in the form:
\begin{equation}
		\psi \ = \ \psi_0 + a s'\xi' + b \xi'^2 + r(s',\xi') \ , \qquad \hbox{where \ \ } 
		a \neq 0 \ , \notag
\end{equation}
and the remainder satisfies $|\nabla^\alpha r(s',\xi')|\ll (|s'|+|\xi'|)^{2-|\alpha|}$
on the region $\mathcal{A}^{sp}$ defined above. Moreover on this region 
in the $(s',\xi')$ variables one has that the amplitude 
$\Phi=\chi^{sp}h_\lambda \varphi$ obeys derivative bounds 
of the form $|\nabla^\alpha_{(s',\xi')} \Phi|\lesssim 1$. Thus, after a further 
uniformly smooth change of variables
we may write:
\begin{equation}
		I_+^{sp} \ = \ e^{i\psi_0}\int\!\!\!\int e^{i(z_1^2-z_2^2)}\Phi(z)dz \ , 
		\qquad \hbox{where \ \ } \Phi\in C_0^\infty \hbox{ \ \and \ \ }
		|\nabla_z^\alpha \Phi| \lesssim 1 \ , \notag
\end{equation}
and the uniform bound $I_+^{sp} = O(1)$ follows from integration by parts away from the origin.

\end{proof}

%-------------------------------------------------------------------------

\subsection{Proof of the Nonlinear $L^2$ and $L^\infty$ Estimates}\label{Igor_sect}

Here we prove Proposition \ref{nonlin_est_prop}. 
First note that  \eqref{nonlin_nrg_est1}  is a standard energy estimate 
and follows  via multiplication of the equation \eqref{nonlin1}
 by $\dot u$ and then integrating by parts. Details are left to the reader.

We obtain the $L^\infty$ bound \eqref{nonlin_nrg_est2} 
in essentially the same way after a frequency localization. Notice that
by estimate \eqref{L00_sob} and the form of RHS \eqref{nonlin_nrg_est2} 
we only need to establish this for frequencies $k<c\ln_2(\rho)$, as long as 
we choose $c$ so that $\delta \ll c \ll 1$. Our goal then is to show:
\begin{equation}
		\slp{(w_{<c\ln_2(\rho)},\dot w_{<c\ln_2(\rho)})}{  L^\infty_{\rho,y} [1,T]} 
		\!\lesssim\!  \slp{(w_0,w_1)}{H^2_y \times H^1_y} 
		\!+ \!\!\!\sup_{1\leqslant \rho\leqslant T}\!\!\!\! \rho^{-\td{c}}\slp{w(\rho)}{S(\rho)}
		\!+\! \mathcal{Q}_2\big(\slp{w}{S[1,T]} \big)\! +\!
		\slp{G}{L^1_\rho (  L^\infty_y)  [1,T]}  , \label{nonlin_nrg_est2'}
\end{equation}
where $\td{c}=\frac{1}{8}c$ and
where our convention  is  $\dot w_{<k}=P_{<k}\partial_\rho  w$.
To this end we write the $P_{<c\ln_2(\rho)}$ truncated version of   \eqref{nonlin2} as:
\begin{equation}
		(\Box_\mathcal{H}+1)w_{<c\ln_2(\rho)} \!+\! \rho^{-1} a_{<c\ln_2(\rho)+C}
		w_{<c\ln_2(\rho)}^3 \!+\! \rho^{-1}b_{<c\ln_2(\rho)+C}
		\dot w_{<c\ln_2(\rho)}^2w_{<c\ln_2(\rho)} \! 
		= \! \mathcal{R}_c (w,\dot w) \!+\! G_{<c\ln_2(\rho)} \ . \label{freq_loc_evol}
\end{equation}

The   proof of \eqref{nonlin_nrg_est2'} is in a series of steps.

\step{1}{Estimate for the remainder}
Here we establish the preliminary  estimate:
\begin{equation}
		\lp{  \mathcal{R}_c (w,\dot w) }{L^\infty_y}
		\ \lesssim \ \rho^{-1-\frac{1}{2}c+\delta}
		\mathcal{Q}_1(\lp{w}{S(\rho)}) \ . \label{L00_nonlin_rem}
\end{equation}
This is shown for the various parts of $\mathcal{R}_c$ separately.

\case{1}{The term $[\partial_\rho^2 , P_{<c\ln_2(\rho)}]w$}
We have $[\partial_\rho^2 , P_{<c\ln_2(\rho)}]w = \rho^{-1} P'_{k=c\ln_2(\rho) + O(1)}\dot w
+ \rho^{-2} P''_{k=c\ln_2(\rho) + O(1)}w$ for cutoffs $P,P''$ with standard $L^1$ kernels.
Then \eqref{L00_nonlin_rem} for this term follows from estimate
\eqref{L00_sob}.

\case{2}{The term $T=\rho^{-1}P_{<c\ln_2(\rho)}\big( a w^3 \big)
- \rho^{-1}  a_{<c\ln_2(\rho)+C} w^3_{<c\ln_2(\rho)} $}
We decompose this further into two subpieces by writing $T=T_{1}-T_{2}$
where:
\begin{equation}
		T_{1} \ = \ 
		\rho^{-1}P_{<c\ln_2(\rho)}\big[ a(w^3- w^3_{<c\ln_2(\rho)})\big] \ , \qquad
		T_{2} \ = \ \rho^{-1}P_{\geqslant c\ln_2(\rho)}\big( 
		a_{<c\ln_2(\rho)+C} w^3_{<c\ln_2(\rho)}\big) 
		\ . \notag
\end{equation}

\case{2.a}{The bound for $T_{1}$}
Here we write  $w^3 -(w_{<k})^3=(w_{\geqslant k})^3
+3(w_{\geqslant k})^2 w_{<k}+3 w_{\geqslant k}w_{<k}^2$,
then \eqref{L00_nonlin_rem} follows from summing over the Sobolev estimate
\eqref{L00_sob} for one of the $w_{\geqslant c\ln_2(\rho)}$.

\case{2.b}{The bound for $T_{2}$}
We may write $ a_{<c\ln_2(\rho)}=P_{k=c\ln_2(\rho)+O(C)}a_1 + P_{<c\ln_2(\rho)-C}a$.
Since we are assuming $a_1\in \mathcal{S}$ the coefficient bound \eqref{P_k_beta}
gives \eqref{L00_nonlin_rem} for the first term on the RHS of this formula. To handle the second term
we write:
\begin{equation}
		P_{k\geqslant c\ln_2(\rho)}(a_{<c\ln_2(\rho)-C}
		w^3_{<c\ln_2(\rho)}) \ = \ 
		P_{k\geqslant c\ln_2(\rho)}\big(a_{<c\ln_2(\rho)-C} (
		w^3_{<c\ln_2(\rho)} -w^3_{<c\ln_2(\rho)-C}
		)\big)
		\ , \notag
\end{equation}
and then we are again in the situation of \textbf{Case 2.a} above.

\case{3}{The term $\rho^{-1}P_{<c\ln_2(\rho)}\big( b \dot w^2 
w \big)-\rho^{-1}  b_{<c\ln_2(\rho)+C} \dot w^2_{<c\ln_2(\rho)}
w_{<c\ln_2(\rho)} $}
This is similar to the previous case.

\step{2}{The integral estimate}
We multiply the equation \eqref{freq_loc_evol} by $\partial_\rho w_{<c\ln_2(\rho)}$
and then first  integrate by parts in time followed by taking the sup in $y$. This yields
the estimate:
\begin{equation}
		\lp{(\partial_\rho w_{<c\ln_2(\rho)},  w_{<c\ln_2(\rho)})}{L^\infty_y}^2\big|_1^T
		\ \lesssim \   I_1  +  I_2 + N_1+N_2   \ , \label{IBP_iden}
\end{equation}
where:
\begin{align}
		I_1 \ &= \ \sup_y \big|   \int_1^T  \rho^{-1} a_{<c\ln_2(\rho)+C} w_{<c\ln_2(\rho)}^3\partial_\rho
		w_{<c\ln_2(\rho)}d\rho\big| \ , \notag \\ 
		I_2 \ &= \ \sup_y \big|   \int_1^T  \rho^{-1}b_{<c\ln_2(\rho)+C}
		\dot w_{<c\ln_2(\rho)}^2w_{<c\ln_2(\rho)}\partial_\rho  w_{<c\ln_2(\rho)} d\rho \big|
		\ , \notag
\end{align}
and:
\begin{equation}
		N_1 \! = \! \lp{\rho^{-2}(w_{<c\ln_2(\rho)},\partial_y^2w_{<c\ln_2(\rho)}   
		)\partial_\rho
		w_{<c\ln_2(\rho)} }{L^1_\rho(L^\infty_y)[1,T]} \ , \quad
		N_2 \! = \! \lp{(\mathcal{R}_c, G_{<c\ln_2(\rho)})\partial_\rho
		w_{<c\ln_2(\rho)} }{L^1_\rho(L^\infty_y)[1,T]} \ . \notag
\end{equation}
Our goal for the remainder of the proof is to show:
\begin{equation}
		 I_1 + I_2  + N_1 + N_2  \lesssim 
		 \epsilon \lp{\partial_\rho w_{<c\ln_2(\rho)}}{L^\infty_{\rho,y}[0,T]}^2
		\!+\! \sup_{0\leqslant \rho\leqslant T} \rho^{-2\td{c}}\lp{w(\rho)}{S(\rho)}^2 
		+ \mathcal{Q}_4(\lp{w}{S[1,T]})
		+ \epsilon^{-1} \lp{G}{L^1_\rho(L^\infty_y)}^2 
		\ , \label{nonlin_L00_error_est}
\end{equation}
which will be done for each term separately. With \eqref{nonlin_L00_error_est} in hand, estimate
\eqref{nonlin_nrg_est2'} follows from supping \eqref{IBP_iden} over times in the interval $[0,T]$,
taking $\epsilon$ sufficiently small to absorb the first term on RHS \eqref{nonlin_L00_error_est},
and then using  commutator formula:
\begin{equation}
		\dot w_{<c\ln_2(\rho)}\ = \ 
		\partial_\rho w_{<c\ln_2(\rho)}-\rho^{-1}P'_{k=c\ln_2(\rho)+O(1)}w 
		\ , \label{dot_w_exp}
\end{equation}
to return to the original $ \dot w_{<c\ln_2(\rho)}$ variable.

\case{1}{The integral $I_1$}
Here we show $I_1\lesssim \mathcal{Q}_4(\lp{w}{S[1,T]})$. To prove it 
we integrate by parts one time with respect to $\rho$, noting that the weight
$|\partial_\rho (\rho^{-1} a_{<c\ln_2(\rho)+C})|\lesssim \rho^{-2}$ is integrable.

\case{2}{The integral $I_2$}
This term is a little more involved. First use the identity (for time dependent frequency cutoff
and $b_{<c\ln_2(\rho)+C}$ replaced by $b$ to save notation):
\begin{equation}
		\rho^{-1}b \dot w_{<k}^2 w_{<k}\partial_\rho w_{<k}=
		\frac{1}{2}\partial_\rho \big(\rho^{-1}b \dot w_{<k}^2 w_{<k}^2\big) 
		- \frac{1}{2}
		\partial_\rho (\rho^{-1}b)  \dot w_{<k}^2 w_{<k}^2
		- \rho^{-1}b\dot w_{<k}\partial_\rho \dot w_{<k} w_{<k}^2 \ . \notag
\end{equation}
The integral of the first and second  terms on the RHS above are directly bounded 
in terms of RHS \eqref{nonlin_L00_error_est}. For the last term on RHS above
we use the identity:
\begin{equation}
		\partial_\rho \dot w_{<c\ln_2(\rho)}=-(1+4^{-1}\rho^{-2})w_{<c\ln_2(\rho)}
		+ (\Box_\mathcal{H} + 1)w_{<c\ln_2(\rho)}
		-\partial_\rho [\partial_\rho, P_{<c\ln_2(\rho)}]w 
		+ \rho^{-2}\partial_y^2 w_{<c\ln_2(\rho)} \ . \label{ddot_w_exp}
\end{equation}
All of the terms on the RHS \eqref{ddot_w_exp} yield integrable contributions
when multiplied by $\rho^{-1}b \dot w_{<c\ln_2(\rho)} w_{<c\ln_2(\rho)}^2$, possibly
after an addition integration by parts. We explain each case separately:

For the first term on RHS \eqref{ddot_w_exp} multiplied by $\rho^{-1}b \dot 
w_{<c\ln_2(\rho)} w_{<c\ln_2(\rho)}^2$ we either have an integrable contribution thanks 
to the weight $\rho^{-2}$, or we are back in the situation of \textbf{Case 1} above after
a use of identity \eqref{dot_w_exp} to write things as an absolutely integrable contribution
plus a time derivative.

For the second term on RHS \eqref{ddot_w_exp} multiplied by $\rho^{-1}b \dot 
w_{<c\ln_2(\rho)} w_{<c\ln_2(\rho)}^2$ we have an integrable contribution by 
inspection of the terms in equation \eqref{freq_loc_evol} and using 
estimate \eqref{L00_nonlin_rem}. 

For the third
term on RHS \eqref{ddot_w_exp} multiplied by $\rho^{-1}b \dot 
w_{<c\ln_2(\rho)} w_{<c\ln_2(\rho)}^2$ we expand:
\begin{equation}
		\partial_\rho [\partial_\rho, P_{<c\ln_2(\rho)}] \ = \ 
		\rho^{-1} P'_{k=c\ln_2(\rho) + O(1)} (\partial_\rho - \rho^{-1}) + \rho^{-2}
		P''_{k=c\ln_2(\rho) + O(1)} \ , \notag
\end{equation}
which directly leads to absolutely integrable contributions.

Finally, for the last  term on RHS \eqref{ddot_w_exp} multiplied by $\rho^{-1}b \dot 
w_{<c\ln_2(\rho)} w_{<c\ln_2(\rho)}^2$ we first sum over 
 \eqref{L00_sob} to get:
\begin{equation}
		\lp{\rho^{-2}\partial_y^2 P_{<c\ln_2(\rho)} w}{L^\infty} \ \lesssim \ \rho^{-2+\frac{3}{2}c+\delta}
		\lp{w}{S(\rho)} \ . \label{low_dy_Sob} 
\end{equation}
Again this leads to an absolutely integrable contribution.

Note that up to this point all of our estimate have been in terms of the last two
expressions on RHS \eqref{nonlin_L00_error_est}.

\case{3}{The term $N_1$}
For this we use identity \eqref{dot_w_exp} and estimate \eqref{low_dy_Sob}
which directly gives the bound  
$N_1\lesssim \sup_{0\leqslant \rho\leqslant T} \rho^{-c}\lp{w(\rho)}{S(\rho)}^2 $.

\case{4}{The term $N_2$}
For this we directly use estimate \eqref{L00_nonlin_rem} and Young's inequality which gives:
\begin{equation}
		N_2 \ \lesssim \ \epsilon \lp{\partial_\rho w_{<c\ln_2(\rho)}}{L^\infty_{\rho,y}[0,T]}^2
		+ \sup_{0\leqslant \rho\leqslant T} \rho^{-2\td{c}}\lp{w(\rho)}{S(\rho)}^2 
		+\mathcal{Q}_4(\lp{w}{S[1,T]})
		+ \epsilon^{-1} \lp{G}{L^1_\rho(L^\infty_y)}^2 \ . \notag
\end{equation}

%-------------------------------------------------------------------------

\subsection{Multilinear Estimates Pt. II: 
Semi-classical Bilinear Operators}

To set this up we define the $h=\rho^{-1}$ semi-classical fourier transform
as:
\begin{equation}
    	\widehat{u}(\xi) \ = \ \mathcal{F}_\rho (v)(\xi) \ = \ 
	 \rho\, \int_\mathbb{R}\  e^{-i\rho \xi y}\ u(y) \ dy \ , \notag %\label{SC_F}
\end{equation}
with inversion formula:
\begin{equation}
     u(y) \ = \ \mathcal{F}^{-1}_\rho (\widehat{u})(y) \ = \
    \frac{1}{2\pi}\ \int_\mathbb{R}\  e^{i\rho y  \xi }\ \widehat{u}(\xi) \ d\xi \ . \notag %\label{SC_Finv}
\end{equation}
In particular the symbol of $D_y$ is $\xi$ in this picture. One must keep 
in mind that the operation $u\to \widehat{u}$ is time dependent, in particular one  has 
the identity:
\begin{equation}
		 [\partial_\rho , \mathcal{F}_\rho]  \ = \ \rho^{-1} \partial_\xi \xi  
		 \mathcal{F}_\rho \ . \label{FT_comm}
\end{equation}
% $ -i \mathcal{F}_\rho\, yD_y$.
Next, given a symbol $k(\xi,\eta)$ we use it to define a semi-classical bilinear operator
as follows:
\begin{equation}
    K( {}^{1\!}D_y,{}^{2\!}D_y)[u,v] \ = \ \frac{1}{4\pi^2}\,
    \int\!\!\! \int k(\xi,\eta)
    e^{i\rho y(\xi+\eta)}
    \widehat{u}(\xi) \widehat{v}(\eta)\ d\xi d\eta \ . \label{pdo_def}
\end{equation}
The class of symbols we will work with are the following. 

\begin{defn}[$S^{a,b;c}$ symbols]
We say that a symbol $k(\xi,\eta)$ is in $S^{a,b;c}$ if $k=p/q$ where
$q \approx  (\langle \xi \rangle + \langle\eta \rangle )^c$ 
and:
\begin{equation}
		|(\langle \xi\rangle \partial_\xi)^l (\langle \eta\rangle \partial_\eta)^m p | \ \lesssim \ C_{l,m}
		\langle \xi \rangle ^a \langle \eta \rangle ^b \ , 
		\qquad |(\langle \xi\rangle \partial_\xi)^l (\langle \eta\rangle \partial_\eta)^m q | \ \lesssim \ C_{l,m}
		(\langle \xi \rangle + \langle\eta \rangle )^c \ . \label{bilin_sym_semi}
\end{equation}
We say that $k\in S_0^{a,b;c}$ if in addition to the above bounds one has $k(0,0)=0$.
\end{defn}

Our main algebraic result for the operators $K({}^1\!D_y,{}^2\!D_y)$
is the following.

\begin{lem}[Bilinear $\Psi$DO Calculus]
Let $k\in S^{a,b;c}$ and let  $K({}^1\!D_y,{}^2\!D_y)$
be the corresponding operator operator as defined on line \eqref{pdo_def}. 
Then for all $u,v\in\mathcal{S}$ one has the following
identities:
\begin{equation}
        D_y K( {}^{1\!}D_y,{}^{2\!}D_y)[u,v] \ = \
     K( {}^{1\!}D_y,{}^{2\!}D_y)[D_y u,v]
    + K( {}^{1\!}D_y,{}^{2\!}D_y)[u,D_y v] \ . \label{leib1}
\end{equation}
and:
\begin{multline}
        \partial_\rho K( {}^{1\!}D_y,{}^{2\!}D_y)[u,v] \ = \
     K( {}^{1\!}D_y,{}^{2\!}D_y)[\partial_\rho u,v]
    + K({}^{1\!}D_y,{}^{2\!}D_y)[u,\partial_\rho v]\\
    - \rho^{-1}\partial_1 K( {}^{1\!}D_y,{}^{2\!}D_y)[D_y u,v]
    - \rho^{-1}\partial_2 K( {}^{1\!}D_y,{}^{2\!}D_y)[u,D_y v]
    \ . \label{leib2}
\end{multline}
Here  $\partial_1 K$ and $\partial_2 K$
have symbols $\partial_\xi k$ and $\partial_\eta k$ respectively.
\end{lem}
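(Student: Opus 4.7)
The first identity is the easier of the two, and I would handle it by direct differentiation under the integral sign in the definition \eqref{pdo_def}. Since $D_y = (i\rho)^{-1}\partial_y$ and $D_y e^{i\rho y(\xi+\eta)} = (\xi+\eta) e^{i\rho y(\xi+\eta)}$, applying $D_y$ to $K({}^1D_y, {}^2D_y)[u,v]$ produces a factor of $(\xi+\eta)$ inside the integrand. Splitting this as $\xi + \eta$ and recalling that $\widehat{D_y u}(\xi) = \xi \widehat{u}(\xi)$ (because $\mathcal{F}_\rho \partial_y = i\rho\xi\mathcal{F}_\rho$) immediately yields the two terms on the right of \eqref{leib1}.

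For the second identity, the plan is again to differentiate the integral representation, but now I must use the commutator formula \eqref{FT_comm} to interchange $\partial_\rho$ with $\mathcal{F}_\rho$. Differentiating under the integral produces three contributions: one from $\partial_\rho e^{i\rho y(\xi+\eta)} = iy(\xi+\eta) e^{i\rho y(\xi+\eta)}$, and two of the form $\partial_\rho \widehat{u}(\xi) = \widehat{\partial_\rho u}(\xi) + \rho^{-1}\partial_\xi(\xi\widehat{u}(\xi))$ (and symmetrically in $v$). The first pieces of the latter two contributions assemble into $K[\partial_\rho u, v] + K[u, \partial_\rho v]$, leaving the factors of $\rho^{-1}\partial_\xi(\xi\widehat{u})$ and $\rho^{-1}\partial_\eta(\eta\widehat{v})$ to be processed.

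The next step is to integrate by parts in $\xi$ (respectively $\eta$) in these remaining pieces. When $\partial_\xi$ falls on the symbol $k(\xi,\eta)$ it produces $-\rho^{-1}\partial_1 K({}^1D_y,{}^2D_y)[D_y u, v]$, which is exactly one of the correction terms in \eqref{leib2}; the symmetric integration by parts in $\eta$ supplies the other. When instead $\partial_\xi$ falls on the exponential $e^{i\rho y(\xi+\eta)}$ it produces a factor of $i\rho y$, giving a term proportional to $iy\xi$ in the integrand (and symmetrically $iy\eta$). The key observation that closes the argument is that these two $iy$ contributions combine to exactly $-iy(\xi+\eta)$ times the original integrand, which cancels precisely the $iy(\xi+\eta)$ term that came from $\partial_\rho$ acting on the exponential.

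The only potential obstacle is bookkeeping: making sure the integration by parts is justified (which follows from $u,v \in \mathcal{S}$ and the symbol bounds \eqref{bilin_sym_semi} that guarantee all boundary terms vanish and all integrands are absolutely integrable), and tracking the signs correctly so that the exponential contribution and the integration-by-parts contributions cancel rather than reinforce. Once that cancellation is verified, the only surviving terms are $K[\partial_\rho u, v] + K[u, \partial_\rho v]$ together with the two symbol-derivative terms, which is exactly \eqref{leib2}.
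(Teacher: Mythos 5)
Your proof is correct and follows exactly the same approach as the paper: differentiate under the integral sign for \eqref{leib1}, and for \eqref{leib2} apply the commutator identity \eqref{FT_comm} and then integrate by parts in $\xi$ and $\eta$, with the $iy(\xi+\eta)$ term from differentiating the exponential cancelling the boundary contributions. The cancellation you identify as the "key observation" is indeed the crux, and your bookkeeping is accurate.
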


\begin{proof}
The first identity follows at once by differentiation of the formula \eqref{pdo_def}.
The second formula follows as well by combining \eqref{pdo_def} with \eqref{FT_comm},
and then integrating by parts with respect to $\xi$.
\end{proof}

Our main analytical result is:

\begin{prop}[Estimates for Bilinear $\Psi$DO]\label{main_bilin_est_prop}
Let $a,b,c\geqslant 0$ and let $k\in S^{a,b;c}$ with $K=K({}^1\!D_y,{}^2\!D_y)$
the corresponding operator defined by $\mathcal{F}_\rho$. Then 
one has the following mappings:
\begin{align}
		K:\,  S(\rho) \times S(\rho) \ &\longrightarrow \ S(\rho) \ , 
		&\hbox{if \ \ \ }& a+b\ \leqslant 1+c \ , \quad
		a \ \leqslant \ c \ , \qquad \ \ 
		b \ \leqslant \ c \ , \label{bilin_est1}\\
		K:\,  S(\rho) \times \dot{S}(\rho) \ &\longrightarrow \ S(\rho) \ , 
		&\hbox{if \ \ \ }& a+b\ \leqslant c \ , \qquad\ \ 
		a \ \leqslant \ c \ , \qquad \ \ 
		b \ \leqslant \ c-1 \ , \label{bilin_est2}\\
		K:\,  \dot S(\rho) \times \dot S(\rho) \ &\longrightarrow \ S(\rho) \ , 
		&\hbox{if \ \ \ }& a+b\ \leqslant c-1 \ , \quad
		a \ \leqslant \ c-1 \ , \quad 
		b \ \leqslant \ c-1 \ . \label{bilin_est3}
\end{align}
Finally, if $k\in S_0^{a,b;c}$ then under the same restrictions as
above one has the improved $L^\infty$ bound:
\begin{equation}
		K:\,  A \times B \ \longrightarrow \ \rho^{-\frac{1}{2}+\delta}L^\infty_y \ , 
		\qquad A=S(\rho) ,\dot S(\rho) \ , \ \ B=S(\rho) ,\dot S(\rho) \ . \label{improved_L00}
\end{equation}
\end{prop}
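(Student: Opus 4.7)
The plan is to decompose the symbol $k$ dyadically in semi-classical frequency and reduce each mapping property to a sum of frequency-localized product estimates handled by Lemma \ref{basic_prod_lemma}. Write $k = \sum_{j,l \geqslant 0} k_{j,l}$ with smooth cutoffs supported where $\langle \xi\rangle \sim 2^j$ and $\langle \eta\rangle \sim 2^l$, so that the hypothesis \eqref{bilin_sym_semi} gives
$$
|\partial_\xi^\alpha \partial_\eta^\beta k_{j,l}(\xi,\eta)| \ \lesssim \ 2^{aj + bl - c\max(j,l) - \alpha j - \beta l} \ .
$$
Because the semi-classical Fourier transform identifies $\langle \xi\rangle \sim 2^j$ with standard frequency $\sim 2^j \rho$, each bilinear piece $K_{j,l}$ only sees the corresponding standard Littlewood--Paley pieces $P_{j + \ln_2 \rho} u$ and $P_{l + \ln_2 \rho} v$ (with $j=0$ collecting standard frequencies $\lesssim \rho$). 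Thus after peeling off the worst weight, $K_{j,l}$ becomes a bounded bilinear convolution that we can control with Lemma \ref{basic_prod_lemma}.

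For \eqref{bilin_est1}, I would split the dyadic sum into the high-low regimes ($j \ll l$ or $l \ll j$) and the high-high regime ($j \approx l$). In the high-low regime the denominator is comparable to the maximum, so the residual weight is $2^{b\min(j,l)}$ (or $2^{a\min(j,l)}$); since $a,b \leqslant c$, these derivatives fall on the low-frequency input, where they are harmless and yield a geometric sum. In the high-high regime the residual weight $2^{(a+b-c)j}$ must be absorbed by the $\rho^\delta H^1_y$ component together with the $D_y$ gain from $S(\rho)$ at high frequencies, which is precisely what $a+b \leqslant 1+c$ provides (one derivative is absorbed by $H^1_y$, the rest by a $D_y$ on one high-frequency factor). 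For \eqref{bilin_est2} and \eqref{bilin_est3} I would use the infimum definition of $\dot S(\rho)$: each $\dot S$ input is written as $-D_y^2 w + r$ with $w \in S(\rho)$ and a remainder already in $\rho^\delta H^1_y \cap L^2_y \cap L^\infty_y$; the two extra derivatives carried by $D_y^2 w$ are precisely what the sharper constraints $a \leqslant c-1$, $b \leqslant c-1$ tolerate, reducing the bounds to the case of \eqref{bilin_est1}.

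For the improved $L^\infty$ estimate \eqref{improved_L00} under the $S_0$ hypothesis $k(0,0)=0$, the fundamental theorem of calculus furnishes a decomposition $k(\xi,\eta) = \xi\, m_1(\xi,\eta) + \eta\, m_2(\xi,\eta)$ with $m_1 \in S^{a-1,b;c}$ and $m_2 \in S^{a,b-1;c}$, so $K[u,v] = K_1[D_y u, v] + K_2[u, D_y v]$. The extra $D_y$ on one input, combined with the Sobolev bound \eqref{L00_sob} giving $\lp{P_k u}{L^\infty_y} \lesssim \rho^\delta 2^{-k/2}\lp{u}{S(\rho)}$, produces the desired $\rho^{-\frac{1}{2}+\delta}$ factor once we sum the geometric series in dyadic frequency (using the constraints already established above to control the weight from the symbol).

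I expect the main obstacle to be the bookkeeping in the $\dot S$ cases: the infimum definition forces one to carry along the auxiliary $w \in S(\rho)$, and after the symbol decomposition one must verify that pieces of the form $K_{j,l}[D_y^2 w_j, v_l]$ can be reabsorbed into the output $S(\rho)$ norm through orthogonality, in direct analogy with the high-low and high-high splitting used to prove the algebra estimate $\lp{uv}{\dot S(\rho)} \lesssim \lp{u}{S(\rho)}\lp{v}{\dot S(\rho)}$ on line \eqref{alg2}. In particular one needs the high-frequency embedding \eqref{high_embed} together with the uniform boundedness of $D_y^2 P_{<\ln_2(\rho)}$ on $L^p_y$ in order to recover the $L^\infty_y \cap L^2_y$ portion of the output norm, after which the identities \eqref{leib1}--\eqref{leib2} handle the $\partial_\rho$ derivatives (each commutator $[\partial_\rho, K]$ drops into a better class $S^{a-1,b;c}$ or $S^{a,b-1;c}$ with an extra $\rho^{-1}$).
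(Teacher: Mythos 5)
Your overall strategy — dyadic decomposition, reduction to the product estimates of Lemma~\ref{basic_prod_lemma}, using the constraints $a+b\leqslant c+1$, $a\leqslant c$, $b\leqslant c$ to control the residual weight, and the vanishing condition to extract an extra derivative for the $L^\infty$ gain — is aligned in spirit with the paper's proof, which rescales to $\rho=1$ and organizes these considerations into a separate ``Model Estimates'' lemma (Lemma~\ref{model_est_lem}). However, there are two places where your proposal as written has a genuine gap.

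First, the $\dot S$ cases. You propose to insert the infimum decomposition $\dot u = -D_y^2 w + r$ directly and say the ``two extra derivatives carried by $D_y^2 w$ are precisely what the sharper constraints $a\leqslant c-1$, $b\leqslant c-1$ tolerate.'' This does not add up: replacing a $\dot S$ input by $D_y^2 w$ with $w\in S(\rho)$ multiplies the symbol by $\eta^2$, raising $b$ to $b+2$, and the hypothesis $b\leqslant c-1$ only gives $b+2\leqslant c+1$, not the $b+2\leqslant c$ that \eqref{bilin_est1} requires. The gain of $\dot S$ over $S$ is one power of $\langle D_y\rangle$, not two powers of $D_y$. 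The paper exploits this cleanly via the pre-established embedding $\lp{\langle D_y\rangle^{-1}u}{S(\rho)}\lesssim\lp{u}{\dot S(\rho)}$ on line \eqref{dot_S_bnds}: a $\dot S$ input is replaced by $\langle D_y\rangle(\langle D_y\rangle^{-1}u)$, so the symbol picks up exactly one factor of $\langle\eta\rangle$ (or $\langle\xi\rangle$), and the shifted exponents land precisely in the range of \eqref{bilin_est1}. Your direct attack on the infimum would need an argument mirroring the proof of that embedding (high/low splitting, $\langle D_y\rangle^{-1}D_y^2$ being order $1$, etc.); it is not a one-line reduction.

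Second, the improved $L^\infty$ bound. You propose a global decomposition $k=\xi m_1+\eta m_2$ via the fundamental theorem of calculus and claim $m_1\in S^{a-1,b;c}$, $m_2\in S^{a,b-1;c}$. This is not generally true at high frequency: $m_1=\int_0^1\partial_\xi k(t\xi,t\eta)\,dt$ and the integral over $t\in[0,1]$ does not respect the $(\langle\xi\rangle+\langle\eta\rangle)^{-c}$ decay — for $a-1-c<-1$ it produces $\langle\xi\rangle^{-1}$ rather than $\langle\xi\rangle^{a-1-c}$, a strict loss. Even granting the decomposition, your closing geometric sum is not obviously convergent without a further frequency restriction. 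The paper sidesteps all of this by projecting on output frequency: since the $\td S$ bound already controls $P_{>0}K[u,v]$ in $L^\infty$ via $P_{>0}\dot H^1\subseteq L^\infty$ with the right power of $\rho$, only $P_{\leqslant 0}K[u,v]$ needs the vanishing condition. After the splitting on line \eqref{spec_L00_sum}, the high--high part is handled by \eqref{model_est3} and orthogonality, and the only place the $S_0$ condition is used is on the piece with symbol $p_0(\xi+\eta)k(\xi,\eta)p_{<C}(\xi)p_{<C}(\eta)$, which is smooth and compactly supported — there the factorization $=\xi\,\td k(\xi,\eta)p_{<C}(\xi)p_{<C}(\eta)$ is harmless, and one concludes with $D_x P_{<C}\dot H^1\subseteq L^\infty$ and $\rho^{\frac12-\delta}\td S(\rho)\subseteq\dot H^1$. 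You should restrict your FTC decomposition to the low-frequency output regime and bound the high-frequency output directly from the already-proved $S$ estimate.
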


The last estimate of this proposition has a slightly more useful form that we will
use in the sequel:

\begin{prop}\label{imp_L00_prop}
Let $k\in S^{a,b;c}$ then under the same restrictions as on lines 
\eqref{bilin_est1}--\eqref{bilin_est3}. Then for $A,B$
any combination of $A=S(\rho) ,\dot S(\rho)$ and $B=S(\rho) ,\dot S(\rho)$
we have:
\begin{equation}
		\lp{K({}^1\!D_y,{}^2\!D_y)[u,v]-k(0,0)uv}{L^\infty_y} \ \lesssim \ 
		\rho^{-\frac{1}{2}+\delta}\lp{u}{A}\lp{v}{B} \ . \label{imp_L00_est}
\end{equation}
\end{prop}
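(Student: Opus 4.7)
The plan is to reduce Proposition \ref{imp_L00_prop} directly to the improved $L^\infty$ estimate \eqref{improved_L00} of Proposition \ref{main_bilin_est_prop}, by splitting off the value of the symbol at the origin. Specifically, I would decompose
\[
  k(\xi,\eta) \ = \ k(0,0) \ + \ \tilde{k}(\xi,\eta) \ , \qquad \tilde{k}(\xi,\eta) \ = \ k(\xi,\eta)-k(0,0) \ ,
\]
and correspondingly write $K[u,v] = k(0,0)\, uv + \tilde K[u,v]$, where $\tilde K$ is the operator with symbol $\tilde k$. The identity $K[u,v]-k(0,0)uv = \tilde K[u,v]$ follows from the fact that a constant symbol $c$ in the formula \eqref{pdo_def} factors through $\mathcal{F}_\rho^{-1}$ on each input separately to give exactly $c\cdot uv$ pointwise.

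The first step is then to verify $\tilde k\in S_0^{a,b;c}$. Writing $k=p/q$ per the definition, one has $\tilde k = \bigl(p-k(0,0)q\bigr)/q$, and since $k(0,0)$ is just a constant the numerator $p-k(0,0)q$ still satisfies the symbol bounds \eqref{bilin_sym_semi} with exponents $a,b$ (using the bounds on $p$ together with those on $q$, noting $\langle\xi\rangle^a\langle\eta\rangle^b$ dominates $(\langle\xi\rangle+\langle\eta\rangle)^c$ up to a bounded factor times $\langle\xi\rangle^a\langle\eta\rangle^b$ under the hypotheses $a\leqslant c$, $b\leqslant c$, after a trivial algebraic check). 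By construction $\tilde k(0,0)=0$, so $\tilde k\in S_0^{a,b;c}$.

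The second step is to apply estimate \eqref{improved_L00} of Proposition \ref{main_bilin_est_prop} to $\tilde K$, which yields
\[
  \lp{\tilde K[u,v]}{L^\infty_y} \ \lesssim \ \rho^{-\frac{1}{2}+\delta}\lp{u}{A}\lp{v}{B}
\]
for any choice of $A,B\in\{S(\rho),\dot S(\rho)\}$ under the stated restrictions on $(a,b,c)$. Since $\tilde K[u,v] = K[u,v]-k(0,0)uv$, this is precisely \eqref{imp_L00_est}.

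In short, there is essentially no new analytic content beyond Proposition \ref{main_bilin_est_prop}: the only work is the algebraic verification that subtracting the constant $k(0,0)$ from a symbol in $S^{a,b;c}$ produces a symbol still in $S^{a,b;c}$ (hence in $S_0^{a,b;c}$). The mildly nontrivial point, and the one I would check most carefully, is that the quotient representation $p/q$ is preserved under this subtraction with the same denominator $q$, so no new singularities are introduced and the symbol class is genuinely closed under $k\mapsto k-k(0,0)$.
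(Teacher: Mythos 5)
There is a genuine gap in your argument, and it is precisely at the step you flag as a ``trivial algebraic check.'' Subtracting the constant $k(0,0)$ does \emph{not} keep the symbol in $S^{a,b;c}_0$ with the same indices. Writing $\td k = (p-k(0,0)q)/q$ as you propose, the contribution of $k(0,0)q$ to the new numerator is bounded only by $(\langle\xi\rangle+\langle\eta\rangle)^c$, and you need this to be $\lesssim \langle\xi\rangle^a\langle\eta\rangle^b$. That inequality goes the wrong way: the hypotheses $a\leqslant c$, $b\leqslant c$ give $\langle\xi\rangle^a\langle\eta\rangle^b\lesssim(\langle\xi\rangle+\langle\eta\rangle)^{a+b}$, not the reverse. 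A concrete counterexample occurs already in the quadratic normal forms: $k_2=2\alpha_0 q^{-1}\in S^{0,0;2}$ has $k_2(0,0)\neq 0$, and $k_2-k_2(0,0)=2\alpha_0(3-q)/(3q)$ has a numerator growing quadratically in $(\xi,\eta)$, which is not $O(1)=O(\langle\xi\rangle^0\langle\eta\rangle^0)$. So the reduction to \eqref{improved_L00} as you set it up does not go through.

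The paper's proof repairs exactly this point: instead of subtracting the constant $k(0,0)$, it subtracts the compactly supported symbol $k(0,0)p_{<0}(\xi)p_{<0}(\eta)$. Because this is supported where $\langle\xi\rangle,\langle\eta\rangle\approx 1$, it lies in $S^{a,b;c}$ for any admissible indices, so the modified symbol really is in $S_0^{a,b;c}$ and \eqref{improved_L00} applies. The price is an extra term that your argument does not see: one must separately bound
\begin{equation}
  \lp{uv - P_{<\ln_2(\rho)}u\, P_{<\ln_2(\rho)}v}{L^\infty_y}
  \ \lesssim\ \rho^{-\frac12+\delta}\lp{u}{A}\lp{v}{B}\ , \notag
\end{equation}
since the semi-classical cutoff $p_{<0}$ acting via $\mathcal F_\rho$ is the spatial projection $P_{<\ln_2(\rho)}$. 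This reduces, after expanding the difference, to $\lp{P_{\geqslant\ln_2(\rho)}u}{L^\infty_y}\lesssim \rho^{-\frac12+\delta}\lp{u}{A}$, which is supplied by \eqref{L00_sob}. In short, the overall strategy (reduce to $S_0^{a,b;c}$ and invoke \eqref{improved_L00}) is the right one, but the naive constant subtraction does not stay in the symbol class, and your proof omits the additional low-frequency comparison estimate that the correct subtraction forces on you.
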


To prove  Proposition \ref{main_bilin_est_prop} we will rely on two lemmas.

\begin{lem}[Dyadic Kernel Bounds]\label{dyadic_K_lem}
Let $k\in S^{a,b;c}$ with $a,b,c\geqslant 0$, and let
$K$ be the corresponding operator in the case $\rho=1$.
Then for all $1\leqslant p,q,r\leqslant \infty$ with $p^{-1}+q^{-1}=r^{-1}$, and
integers $0\leqslant k\leqslant k'$, one has the uniform family of bounds:
\begin{equation}
		K:\, P_k L^p\times P_{k'}L^q \ \longrightarrow \ 
		2^{a k}2^{(b-c)k'} L^r \ . \label{dyadic_K_bound}
\end{equation}
Here we set $P_{k}=P_{\leqslant 0}$ in the case $k=0$ and similarly for $k'$.
\end{lem}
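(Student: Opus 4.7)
The plan is to pass to the bilinear convolution kernel of $K$ after inserting the Littlewood-Paley cutoffs, bound its $L^1$ norm by an integration-by-parts argument, and then conclude via Minkowski's and H\"older's inequalities. Throughout, set $\lambda_k=\max(2^k,1)$ (so that on the support of the cutoff $P_k$ one has $\langle\xi\rangle\approx \lambda_k$), which handles the convention $P_0=P_{\leqslant 0}$ uniformly.

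The first step is to reduce to an $L^1$ kernel estimate. Define the localized symbol $\widetilde{k}(\xi,\eta)=k(\xi,\eta)\chi_k(\xi)\chi_{k'}(\eta)$ where $\chi_k,\chi_{k'}$ are the symbols of $P_k,P_{k'}$. Then for $\rho=1$ one has the representation
\begin{equation}
K[P_ku,P_{k'}v](y) \ = \ \int\!\!\!\int \widetilde{K}(y-y_1,y-y_2)\,u(y_1)v(y_2)\,dy_1\,dy_2 \ , \notag
\end{equation}
where $\widetilde{K}(z_1,z_2)=\frac{1}{4\pi^2}\int\!\!\int \widetilde{k}(\xi,\eta)e^{i(z_1\xi+z_2\eta)}d\xi\,d\eta$. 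After the change of variables $z_i=y-y_i$, Minkowski's inequality in $L^r_y$ followed by H\"older's inequality in the $p^{-1}+q^{-1}=r^{-1}$ relation yields
\begin{equation}
\lp{K[P_ku,P_{k'}v]}{L^r} \ \lesssim \ \lp{\widetilde{K}}{L^1(\mathbb{R}^2)}\lp{u}{L^p}\lp{v}{L^q}\ , \notag
\end{equation}
so it suffices to show $\lp{\widetilde{K}}{L^1}\lesssim 2^{ak}2^{(b-c)k'}$.

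The second step is to estimate $\widetilde{k}$ and its derivatives. Since $k\in S^{a,b;c}$ with $k=p/q$ and $a,b,c\geqslant 0$, on the support of $\chi_k\chi_{k'}$ one has $q\approx \lambda_{k'}^c$ (using $k\leqslant k'$), and thus the size bound $|\widetilde{k}|\lesssim \lambda_k^a\lambda_{k'}^{b-c}\approx 2^{ak}2^{(b-c)k'}$. The cutoffs $\chi_k,\chi_{k'}$ satisfy $|\partial_\xi^l\chi_k|\lesssim \lambda_k^{-l}$ and $|\partial_\eta^m\chi_{k'}|\lesssim \lambda_{k'}^{-m}$, while the bounds \eqref{bilin_sym_semi} on $p$ and $q$ combined with the quotient rule give the same derivative gain on $p/q$. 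The Leibniz rule then yields
\begin{equation}
|\partial_\xi^l \partial_\eta^m \widetilde{k}(\xi,\eta)| \ \lesssim \ 2^{ak}2^{(b-c)k'}\lambda_k^{-l}\lambda_{k'}^{-m} \ . \notag
\end{equation}

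The third step is standard: integrating by parts $N$ times in $\xi$ and $N$ times in $\eta$ in the Fourier integral defining $\widetilde{K}$, and combining with the volume of the support $\sim \lambda_k\lambda_{k'}$, gives
\begin{equation}
|\widetilde{K}(z_1,z_2)| \ \lesssim \ 2^{ak}2^{(b-c)k'}\, \lambda_k\lambda_{k'}\, \frac{1}{(1+\lambda_k|z_1|)^N}\cdot\frac{1}{(1+\lambda_{k'}|z_2|)^N} \ . \notag
\end{equation}
Integrating this pointwise bound in $(z_1,z_2)$ (with $N\geqslant 2$) produces a factor $\lambda_k^{-1}\lambda_{k'}^{-1}$ that exactly cancels the volume factor, leaving the required bound $\lp{\widetilde{K}}{L^1}\lesssim 2^{ak}2^{(b-c)k'}$. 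This is the main (and essentially only) computation; there is no serious obstacle beyond bookkeeping the exponents so that $k\leqslant k'$ correctly places the $c$ weight against $\lambda_{k'}$, and handling $k=0$ via the convention $\lambda_0=1$.
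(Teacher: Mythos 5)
Your proof is correct and follows essentially the same strategy as the paper's: normalize by the symbol weight $2^{ak}2^{(b-c)k'}$ (you carry it along while the paper absorbs it into the $S^{0,0;0}$ reduction, but that is cosmetic), pass to the bilinear convolution kernel, establish the $L^1$ kernel bound by integration by parts against the localized symbol derivative estimates, and conclude by the bilinear Young inequality (Minkowski then H\"older). The derivative bounds on $k=p/q$ via the quotient rule and the accounting of the support volume $\lambda_k\lambda_{k'}$ against the integrated decay are exactly the mechanism the paper uses.
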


\begin{lem}[Model Estimates]\label{model_est_lem}
Let $k\in S^{a,b;c}$ with $a,b,c\geqslant 0$ and such that $a+b\leqslant c+1$, $a\leqslant c$, 
and $b\leqslant c$. Let $K$ be the corresponding operator in the case $\rho=1$. Then one
has the three embeddings:
\begin{align}
		K:\, H^1 \times \dot{H}^1\cap L^\infty \ &\longrightarrow \ L^2 \ , \label{model_est1}\\
		K:\, L^2 \times \dot{H}^1\cap \dot{H}^2 \cap L^\infty \ 
		&\longrightarrow \ L^2 \ , \label{model_est2}\\
		K:\,   \dot{H}^1\cap L^\infty\times\dot{H}^1\cap \dot{H}^2 \cap L^\infty  
		\ &\longrightarrow \ L^\infty \ . \label{model_est3}
\end{align} 
\end{lem}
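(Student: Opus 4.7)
The plan is to prove all three embeddings by dyadically decomposing $u=\sum_k u_k$ and $v=\sum_{k'}v_{k'}$, applying the dyadic kernel bound \eqref{dyadic_K_bound} of Lemma \ref{dyadic_K_lem} to each piece $K[u_k,v_{k'}]$, and summing the resulting geometric series. I would split the double sum into the three standard frequency regimes: high-high ($|k-k'|\leq C$), low-high ($k\leq k'-C$) — where Lemma \ref{dyadic_K_lem} yields the factor $2^{ak}2^{(b-c)k'}$ — and the symmetric high-low regime, where the roles of $k$ and $k'$ are reversed. In each regime I would distribute the input $L^p$ norms by Hölder's inequality, using Bernstein's inequality in one dimension to trade $L^2$ smoothness for $L^\infty$ control, together with the 1D Sobolev embedding $H^1\hookrightarrow L^\infty$ (available for $u$ in the first estimate, and for $v$ in all three via $\dot H^1\cap L^\infty$) to place the low-frequency factor in $L^\infty$ when convenient.

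For \eqref{model_est1}, in the low-high regime I would put $u_k$ in $L^\infty$ via $H^1\hookrightarrow L^\infty$ and $v_{k'}$ in $L^2$, then sum in $k'$ using one derivative from $\dot H^1$ on $v$ together with $b\leq c$; the high-low regime is symmetric. In the high-high regime Lemma \ref{dyadic_K_lem} gives $2^{(a+b-c)k}\|u_k\|_{L^2}\|v_{k'}\|_{L^\infty}$, and using $\|u_k\|_{L^2}\lesssim 2^{-k}\|u\|_{H^1}$ the sum over $k\sim k'$ converges thanks to $a+b\leq c+1$. For \eqref{model_est2} the same scheme works with $u$ carrying no derivatives, the deficit being absorbed by the extra $\dot H^2$ regularity on $v$. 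For \eqref{model_est3} I would take the $p=q=r=\infty$ case of the dyadic bound and control the $L^\infty$ norms of the frequency pieces by Bernstein against $\dot H^1$ and $\dot H^2$, gaining factors $2^{-k/2}\|u\|_{\dot H^1}$ and $2^{-3k'/2}\|v\|_{\dot H^2}$ as needed, interpolating against $\|u\|_{L^\infty}$ and $\|v\|_{L^\infty}$ at low frequencies.

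The main obstacle will be the borderline cases of the three structural inequalities, in particular the scenario $a+b=c+1$ in the high-high regime of \eqref{model_est1}, where the geometric sum becomes logarithmically divergent if handled by crude absolute summation. To close such endpoints I would apply Cauchy-Schwarz against the square-summable sequence $\{2^k\|u_k\|_{L^2}\}_k$ coming from $\|u\|_{H^1}$ (or against $\{2^{k'}\|v_{k'}\|_{L^2}\}_{k'}$ coming from $\|v\|_{\dot H^1}$), trading the $L^\infty$ norm of one factor for a square-summed $\dot H^1$ contribution of the other at precisely the frequencies where absolute summation would fail. Analogous care at the endpoints $a=c$ and $b=c$ in the off-diagonal regimes, where the corresponding weight $2^{(b-c)k'}$ or $2^{(a-c)k}$ fails to be summable on its own, will complete the proof by balancing the lost decay against the $L^\infty$ control of the ambient inputs.
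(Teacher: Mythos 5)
Your plan is correct and follows essentially the same route as the paper: dyadic decomposition into the three frequency regimes, the dyadic kernel bound of Lemma~\ref{dyadic_K_lem}, a Bernstein/Sobolev trade on the low-frequency factor to gain a fractional power of $2^{\min(k,k')}$, orthogonality in the high-frequency index, and geometric summation, with the structural constraints $a,b\leqslant c$ and $a+b\leqslant c+1$ entering exactly where you identify them. Your Cauchy--Schwarz fix for the endpoint $a+b=c+1$ is an equivalent packaging of what the paper does directly by putting the low-frequency factor in $L^\infty$ at the outset (its estimate \eqref{basic_L2_product}), which yields the $2^{-\frac{1}{2}\min}$ geometric decay without any $\ell^2$ duality.
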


\begin{proof}[Proof of Lemma \ref{dyadic_K_lem}]
The symbol of $K({}^1\! D,{}^2\! D)[P_k u, P_{k'} v]$ is $l(\xi,\eta)=k(\xi,\eta)p_k(\xi)p_{k'}(\eta)$, 
and one directly verifies that $2^{-ak+(c-b)k'}l \in S^{0,0;0}$, so it suffices 
to prove estimate \eqref{dyadic_K_bound} in this case. 

Assuming now that $k\in S^{0,0;0}$, we have that the convolution kernel of 
the localized operator  $L({}^1\! D,{}^2\! D)$ is given by:
 \begin{equation}
 		L({}^1\! D ,{}^2\! D)(x,y) \ = \ \frac{1}{4\pi^2} \int\!\!\!\!\int 
		e^{i(x\xi+y\eta)}k(\xi,\eta)p_k(\xi)p_{k'}(\eta)
		d\xi d\eta \ , \notag
 \end{equation}
 so it suffices to show $k\in S^{0,0;0}$ implies $\lp{L}{L^1(dxdy)}\lesssim 1$ with a unifrom
 bound depending on only finitely many of the seminorms \eqref{bilin_sym_semi}. Integrating by
parts two times when either $|x|\geqslant 2^{-k}$ or $|y|\geqslant 2^{-k'}$, and doing nothing otherwise,
and evaluating the resulting integral gives:
\begin{equation}
		 | L({}^1\! D ,{}^2\! D)(x,y) | \ \lesssim \ 2^{k+k'}(1+ 2^{k} |x|)^{-2}(1+ 2^{k'} |y|)^{-2}
		 \ . \notag
\end{equation}
The desired $L^1$ bound follows.
\end{proof}

\begin{proof}[Proof of Lemma \ref{model_est_lem}]
The proof contains a number of steps and subcases. Here we use the convention
that $P_0=P_{\leqslant 0}$ in all Littlewood-Paley decompositions.

\step{1}{Low frequency factor}
We first dispense with the special case of the contribution of $P_{\leqslant 0}v$.
Breaking $u$ up into dyadic frequencies it suffices show:
\begin{equation}
		\lp{\sum_{k\geqslant 0} 
		K({}^1\! D,{}^2\! D)[P_k u, P_{\leqslant 0} v] }{L^2}  \lesssim  
		\lp{u}{L^2}\lp{v}{L^\infty} \ , \quad
		\lp{\sum_{k\geqslant 0} 
		K({}^1\! D,{}^2\! D)[P_k u, P_{\leqslant 0} v] }{L^\infty}  \lesssim  
		\lp{u}{L^\infty\cap \dot H^1}\lp{v}{L^\infty} \ . \notag
\end{equation}
The first estimate follows from \eqref{dyadic_K_bound} and the fact that
the LHS sum is essentially orthogonal. The second bound follows by breaking the
sum into $k<C$ and $k>C$, and using \eqref{dyadic_K_bound}
in the $L^\infty\cdot L^\infty$ form in both cases followed by:
\begin{equation}
		\sum_{k>C}\lp{P_k u}{L^\infty} \ \lesssim \ \
		\sum_{k>C}2^{-\frac{1}{2}k }\lp{P_k u}{\dot H^1} \ \lesssim \ \lp{u}{\dot H^1} \ , \notag
\end{equation}
in the case of $P_{>C}u$.

It now suffices to prove \eqref{model_est1} and \eqref{model_est2} when the second
factor is $P_{>0}v$, and \eqref{model_est3} in the case where the factors are 
$P_{>0}u$ and $P_{>0}v$ (by symmetry).

\step{2}{Estimates  \eqref{model_est1} and \eqref{model_est2} with $P_{>0}v$}
Because of the frequency restriction on $v$ we may assume its norm
is $H^1$  in $\eqref{model_est1}$, and $H^2$ in  $\eqref{model_est2}$.
Now break the product up into a sum over all frequencies and group terms:
\begin{equation}
		T_1  = \!\!\!\!\! \sum_{\substack{k,k'\geqslant 0\\ k>k'+C}} 
		K({}^1\! D,{}^2\! D)[P_k u, P_{k'} v] \ , \ \ 
		T_2 = \!\!\!\!\! \sum_{\substack{k,k'\geqslant 0\\ k'>k+C}} 
		K({}^1\! D,{}^2\! D)[P_k u, P_{k'} v] \ , \ \ 
		T_3 = \!\!\!\!\! \sum_{\substack{k,k'\geqslant 0\\ |k-k'|\leqslant C}} 
		K({}^1\! D,{}^2\! D)[P_k u, P_{k'} v] 
		\ . \notag
\end{equation}
Now there are two subcases:

\case{1}{High-Low and Low-High products}
Here we consider the sums $T_1$ and $T_2$. Since the sum is essentially orthogonal
in the high frequency factor we can reduce to a fixed high frequency. 
Putting the low frequency factor in $L^\infty$ via Sobolev and using
\eqref{dyadic_K_bound} we have:
\begin{equation}
		\lp{K({}^1\! D,{}^2\! D)[P_k u, P_{k'} v] }{L^2} \ \lesssim \
		2^{\frac{3}{2}  \min\{k,k'\}} \lp{P_k u}{L^2}\lp{P_{k'}v}{L^2}
		\ . \label{basic_L2_product}
\end{equation}
In the case of estimates  \eqref{model_est1} and \eqref{model_est2} we bound 
the RHS above as (resp):
\begin{equation}
		\hbox{RHS}\eqref{basic_L2_product} \ \lesssim \
		2^{-\frac{1}{2}\min\{k,k'\}}2^{-|k-k'|}\lp{P_k u}{H^1}\lp{P_{k'}v}{H^1} \ , \qquad
		\hbox{RHS}\eqref{basic_L2_product} \ \lesssim \
		2^{-\frac{1}{2}\min\{k,k'\}}\lp{P_k u}{L^2}\lp{P_{k'} v}{H^2} \ . \notag
\end{equation}
In either case one may sum the low frequency term.

\case{2}{High-High products}
In this case we bring the sum outside the $L^2$ norm and then use 
\eqref{basic_L2_product} which directly gives $\sum_{k}2^{-\frac{1}{2}k}(RHS\ norms)$
for both   \eqref{model_est1} and \eqref{model_est2}.

\step{3}{Estimate \eqref{model_est3} with
$P_{>0}u$ and $P_{>0}v$}
This is similar to the argument above. First break into all possibly frequency combinations
and bring the resulting sum outside the norm. For fixed frequency 
we get from \eqref{dyadic_K_bound} and Sobolev embedding the estimate:
\begin{equation}
		\lp{K({}^1\! D,{}^2\! D)[P_k u, P_{k'} v] }{L^\infty} \ \lesssim \
		2^{   \min\{k,k'\}} 2^{-\frac{1}{2}k}2^{-\frac{3}{2}k'}\lp{P_k u}{H^1}\lp{P_{k'}v}{H^2}
		\ \lesssim \ 2^{-\frac{1}{2}|k-k'|}\lp{P_k u}{H^1}\lp{P_{k'}v}{H^2}
		\ . \notag
\end{equation}
One may sum the RHS over all $k,k'>0$ using the $\ell^2$ Young's inequality
and orthogonality.
\end{proof}

\begin{proof}[Proof of Proposition \ref{main_bilin_est_prop}]
The proof is in a series of steps.

\step{1}{Reduction and rescaling}
First we note that by the first estimate on line \eqref{dot_S_bnds} we only have to consider 
the $S(\rho)\times S(\rho)$ case of estimates \eqref{bilin_est1}--\eqref{improved_L00}. 

For this estimate we  consider $\dot u$ and $\dot v$ as separate variables, which is permissible 
because \eqref{leib2} shows that differentiation by $\partial_\rho$
preserves the symbol class of $k(\xi,\eta)$.
Next, we rescale everything to $\rho=1$. Setting new variables $x=\rho y$ we have that
$D_y=D_x$ where $D_x=\frac{1}{i}\partial_x$, and the bilinear operators $K({}^1\! D ,{}^2\! D)$
are still defined by \eqref{pdo_def} but now with $\rho=1$. The norm $S(\rho)$ changes under
rescaling to a new norm $\td{S}(\rho)$:
\begin{equation}
		\lp{u}{\td{S}(\rho)} \ = \   \rho^{-\frac{1}{2}}
		\lp{(u,\dot u)}{L^2_x} + \rho^{\frac{1}{2}-\delta} \lp{(u,\dot u, D_x u)}{\dot H^1_x}
		+ \lp{(u,\dot u)}{L^\infty_x } \ . \notag
\end{equation}
In the rescaled picture we are trying to show the two embeddings:
\begin{equation}
		K:\, \td{S}(\rho) \times \td{S}(\rho)
		\ \longrightarrow \ \td{S}(\rho) \ , \qquad \qquad
		K:\,   \td{S}(\rho) \times \td{S}(\rho) \ \longrightarrow \ \rho^{-\frac{1}{2}+\delta}L^\infty \ ,
		\label{rescaled}
\end{equation}
in the case $k\in S^{a,b;c}$ with $a+b\leqslant c+1$, $a\leqslant c$, and $b\leqslant c$ where
in addition we assume  $k\in S^{a,b;c}_0$ for the second estimate. 
In the case of the first bound above we must also consider the case where one of the factors is
$\dot u$ or $\dot v$.
We prove these bounds
separately for each constituent of the $\td{S}(\rho)$ norm.

\case{1}{Contribution of the $L^2$ part}
To estimate the $L^2$ portion of the $\td{S}(\rho)$ norm in the first bound 
on line \eqref{rescaled}  we use the estimates:
\begin{equation}
		\rho^{-\frac{1}{2}} \lp{(u,\dot u)}{H^1} \ \lesssim \
		\lp{u}{\td{S}(\rho)} \ , \qquad
		\lp{v }{L^\infty \cap \dot H^1 }
		\ \lesssim \
		\lp{v}{\td{S}(\rho)} \ , \notag
\end{equation}
where the first bound is used for the time differentiated factor, and
we conclude via estimate 
\eqref{model_est1}.

\case{2}{Contribution of the $\dot H^1$ part} For the $\dot {H}^1$ portion 
of the $\td{S}(\rho)$ norm in the first bound 
on line \eqref{rescaled} we need to differentiate with respect to each of 
$D_x, D_x^2 $. In the case of a time differentiated factor
we only need to apply $D_x$.
There are two cases depending if the derivatives
split under Leibniz's rule, or all go to one factor. In the first case we use:
\begin{equation}
		\lp{(u,\dot u, D_x u )}{L^\infty \cap \dot H^1}
		\ \lesssim \
		\lp{u}{\td{S}(\rho)} \ , \qquad
		\rho^{-\delta + \frac{1}{2}} \lp{D_x u}{H^1} \ \lesssim \
		\lp{u}{\td{S}(\rho)} \ , \notag
\end{equation}
and then conclude via \eqref{model_est1}.
In the second case we use:
\begin{equation}
		\lp{u}{L^\infty \cap \dot H^1\cap \dot H^2 }
		\ \lesssim \ \lp{u}{\td{S}(\rho)} \ , \qquad
		\rho^{-\delta + \frac{1}{2}} 
		\lp{(D_x u , D_x^2 u, D_x\dot{u})}{L^2}
		\ \lesssim \ \lp{u}{\td{S}(\rho)}  \ , \notag
\end{equation}
and then conclude via \eqref{model_est2}.

\case{3}{Contribution of the $L^\infty$ part} To estimate the $L^\infty$ portion of 
the $\td{S}(\rho)$ norm in the first bound 
on line \eqref{rescaled} we  
use:
\begin{equation}
		\lp{u}{L^\infty \cap \dot H^1\cap \dot{H}^2 }
		\ \lesssim \ \lp{u}{\td{S}(\rho)} \ , \qquad
		\lp{\dot u}{L^\infty \cap \dot H^1}
		\ \lesssim \ \lp{u}{\td{S}(\rho)} \ , \notag
\end{equation}
and then conclude via \eqref{model_est3}.

\step{2}{Proof of the improved $L^\infty$ norm}
To prove the second bound on line \eqref{rescaled} it suffices to consider 
instead $P_{\leqslant 0} K({}^1\! D ,{}^2\! D)[u,v]$ because we already have 
the $\td{S}(\rho)$ estimate and can use $P_{>0}\dot H^1\subseteq L^\infty$
%the easy estimate:
%\begin{equation}
%		\lp{P_{>0} K({}^1\! D ,{}^2\! D)[u,v]}{L^\infty} \ 
%		\lesssim \ \rho^{-\delta + \frac{1}{2}}\lp{K({}^1\! D ,{}^2\! D)[u,v]}{\td{S}(\rho)} \ , \notag
%\end{equation}
for the complement. Breaking up the expression into frequencies we have:
\begin{equation}
		P_{\leqslant 0} K({}^1\! D ,{}^2\! D)[u,v] \ = \ P_{\leqslant 0} K({}^1\! D ,{}^2\! D)[P_{<C}u,P_{<C}v]
		+ \!\!\!\sum_{\substack{\max\{ k, k'\} \geqslant C \\ k =k'+O(1)}}
		P_{\leqslant 0} K({}^1\! D ,{}^2\! D)[P_{k}u,P_{k'}v] \ . \label{spec_L00_sum}
\end{equation}
For the second term on RHS we use embedding \eqref{model_est3} and the fact that for high
frequencies one has:
\begin{equation}
		\rho^{-\delta + \frac{1}{2}}\lp{P_k u}{L^\infty \cap \dot H^1\cap \dot{H}^2 }
		\ \lesssim \ \lp{u}{\td{S}(\rho)} \ , 
		\qquad \hbox{for \ \ } k\geqslant C
		\ . \notag
\end{equation}
The frequency sum is then bounded by $\ell^2$ Cauchy-Schwartz and orthogonality.

For the low frequency part on RHS \eqref{spec_L00_sum} 
we need to use the condition $k\in S_0^{a,b;c}$. Assume without
loss of generality that the vanishing condition is due to the first factor, in which case 
the symbol of the total operator is
$p_0(\xi+\eta) k(\xi , \eta)p_{<C}(\xi)p_{<C}(\eta)
= \xi \td{k}(\xi,\eta) p_{<C}(\xi) p_{<C}(\eta)$  
for some smooth compactly supported $\td{k}$. Thus:
\begin{equation}
		\lp{ P_{\leqslant 0} K({}^1\! D ,{}^2\! D)[P_{<C}u,P_{<C}v]}{L^\infty} \ \lesssim \ 
		\lp{D_x P_{<C} u}{L^\infty}\lp{v}{L^\infty} \ , \notag
\end{equation}
and we conclude with $D_x P_{<C}\dot H^1 \subseteq L^\infty$
and $\rho^{\frac{1}{2}-\delta}\td {S}(\rho) \subseteq \dot H^1$.
\end{proof}

Finally, we prove Proposition \ref{imp_L00_prop}.

\begin{proof}
For this proof we switch  back to the $y$ variable.
Given any symbol $k\in S^{a,b;c}$ the new symbol 
$\td{k}(\xi,\eta)=k(\xi,\eta)-k(0,0)p_{<0}(\xi)p_{<0}(\eta)$ is such that $\td {k}\in S_0^{a,b;c}$
with the same indices. Therefore, by estimate \eqref{improved_L00} it suffices
to show that:
\begin{equation}
		\lp{uv - P_{<\ln_2(\rho)}u P_{<\ln_2(\rho)}v }{L^\infty_y} \ \lesssim \ \rho^{-\frac{1}{2}+\delta}
		\lp{u}{A}\lp{v}{B}
		\ , \notag
\end{equation}
for any combination of the cases $A=S(\rho) ,\dot S(\rho)$ and $B=S(\rho) ,\dot S(\rho)$.
Expanding the difference it suffices to show (by symmetry):
\begin{equation}
		\lp{P_{\geqslant \ln_2(\rho)} u }{L^\infty} \ \lesssim \ \rho^{-\frac{1}{2}+\delta} \lp{u}{A} \ , 
		\qquad A=S(\rho) ,\dot S(\rho) \ . \notag
\end{equation}
This follows at once from \eqref{L00_sob}.
\end{proof}

%-------------------------------------------------------------------------
%%%%%%%%%%%%%%%%%%%%%%%%%%%%%% 
%-------------------------------------------------------------------------

\section{The Quadratic Correction}\label{Quad_sect}

The main result of this section is the following:

\begin{thm}[Quadratic Normal Forms]\label{quad_nf_thm}
Let $u$ be a sufficiently smooth and well localized solution to:
\begin{equation}
		(\Box_\mathcal{H} + 1)u \ = \  \rho^{-\frac{1}{2}}\alpha_0 u^2 + \rho^{-1}\beta u^3 + F
		\ , \label{u_quad_nf_def}
\end{equation}
on the time interval $[1,T]$. Then there exists   three nonlinear quantities
$\mathcal{N}_{quad}=\mathcal{N}_{quad}(u,\dot u)$,   
$\mathcal{R}_{quad}=\mathcal{R}_{quad}(u,\dot u,F,\dot{F})$, and 
$\td{\mathcal{R}}_{quad}=\td{\mathcal{R}}_{quad}(u,\dot u,F,\dot{F})$
such that one has the algebraic identity:
\begin{equation}
		(\Box_\mathcal{H} + 1)  \mathcal{N}_{quad} 
		\ = \ \rho^{-\frac{1}{2}}\alpha_0u^2
		  + \mathcal{R}_{quad}
		\ = \ \rho^{-\frac{1}{2}}\alpha_0u^2
		-2\rho^{-1}\alpha_0^2 u^3 + \frac{8}{3}\rho^{-1}
		\alpha_0^2\dot{u}^2 u + \td{\mathcal{R}}_{quad} \ , \label{main_nf_iden_quad'}
\end{equation}
In addition   one has the estimates: 
\begin{multline}
		\lp{\rho^\frac{1}{2} \mathcal{N}_{quad} }{S[1,T]} + \lp{(\mathcal{R}_{quad},
		\dot{\mathcal{R}}_{quad})}{N[1,T]}
		+ \lp{  \td{\mathcal{R}}_{quad}
		 }{ L^1_\rho( L^\infty_y )[1,T]}
		 \ \lesssim \ \lp{\rho^\frac{1}{2} F}{S[1,T]}^2 +
		  \lp{\rho^\frac{1}{2} \dot{F}}{\dot S[1,T]}^2\\
		 +\lp{\rho( F,\dot F)}{L^\infty_{\rho,y}[1,T]}^2
		+ \mathcal{Q}_2\big(\lp{u}{S[1,T]}, \lp{\rho^\frac{1}{2} F}{S[1,T]} ,
		  \lp{\rho^\frac{1}{2} \dot{F}}{\dot S[1,T]}, \lp{\rho( F,\dot F)}{L^\infty_{\rho,y}[1,T]} \big) 
		 \ . \label{main_nf_ests_quad'}
\end{multline}
\end{thm}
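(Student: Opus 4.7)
The plan is a Shatah-type normal form transformation, implemented through the semiclassical bilinear $\Psi$DO calculus of Section \ref{Harm_sect}. The key observation is that at the symbol level the quadratic nonlinearity $u^2$ is non-resonant: if $u$ oscillates at the Klein-Gordon phases $\pm\rho\langle\xi\rangle$, then $u\cdot u$ oscillates at phases $\pm\rho\langle\xi\rangle\pm\rho\langle\eta\rangle$, which never coincide with $\pm\rho\langle\xi+\eta\rangle$ except at $(\xi,\eta)=(0,0)$. Consequently one can invert $\Box_\mathcal{H}+1$ on the quadratic expression through a bilinear $\Psi$DO whose symbol carries the reciprocal of the (bounded below) non-resonance denominator.

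Concretely, I would start from the ansatz
\[
  \mathcal{N}_{quad}\ =\ \alpha_0\rho^{-1/2}\bigl(K_0[u,u]+K_1[u,\dot u]+K_2[\dot u,\dot u]\bigr),
\]
with symbols $k_i(\xi,\eta)\in S^{a_i,b_i;c_i}$ to be determined, and compute $(\Box_\mathcal{H}+1)\mathcal{N}_{quad}$ using the Leibniz identities \eqref{leib1}, \eqref{leib2} from the bilinear $\Psi$DO calculus. Every occurrence of $\ddot u$ is replaced, via the equation \eqref{u_quad_nf_def}, by $-u+\rho^{-2}\partial_y^2 u-\frac{1}{4}\rho^{-2}u+\rho^{-1/2}\alpha_0 u^2+\rho^{-1}\beta u^3+F$. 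The purely bilinear contributions to the result form a linear system for $(k_0,k_1,k_2)$ whose determinant, at the symbol level, is precisely the Klein-Gordon non-resonance denominator; I solve this system so that the total bilinear output equals $\alpha_0\rho^{-1/2}u^2$.

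What is left over defines $\mathcal{R}_{quad}$ and splits into three families: (i) commutator terms from \eqref{leib2}, each carrying a factor $\rho^{-1}$ and an extra $\partial_\xi k_i$ or $\partial_\eta k_i$ on the symbol, hence remaining in a slightly modified $S^{a',b';c'}$ class; (ii) cubic terms arising when the $\rho^{-1/2}\alpha_0 u^2$ piece of the equation is substituted into the $\ddot u$ slot and combined with the $\alpha_0\rho^{-1/2}$ prefactor, giving trilinear expressions of size $\rho^{-1}\alpha_0^2$; and (iii) source and higher-order nonlinear contributions from $F$ and $\rho^{-1}\beta u^3$. To identify $\tilde{\mathcal{R}}_{quad}$ I extract from family (ii) the value of the bilinear symbols at the resonance point $(\xi,\eta)=(0,0)$; this contributes exactly the trilinear monomials $-2\rho^{-1}\alpha_0^2 u^3+\frac{8}{3}\rho^{-1}\alpha_0^2\dot u^2 u$, with the numerical coefficients $-2$ and $\frac{8}{3}$ dictated by the explicit values $k_i(0,0)$ produced in the previous step. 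The difference between this zero-frequency cubic and the true trilinear expression is placed into $\tilde{\mathcal{R}}_{quad}$ and controlled by the sharp bound \eqref{imp_L00_est} of Proposition \ref{imp_L00_prop}, which converts vanishing of the symbol at the origin into a $\rho^{-1/2+\delta}L^\infty_y$ gain, integrable in $\rho$ against the $\rho^{-1}$ prefactor.

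All $S$, $\dot S$, and $N$ norms in \eqref{main_nf_ests_quad'} then reduce to Proposition \ref{main_bilin_est_prop} combined with the algebra and product estimates \eqref{alg1}--\eqref{alg2} of Lemma \ref{basic_norm_lemma}, with each $F$-dependent summand estimated by the corresponding quadratic or mixed $u$--$F$ product. The $\dot{\mathcal{R}}_{quad}$ bound follows by differentiating the identity once more in $\rho$ and reapplying the same argument, producing only additional commutator terms of size $\rho^{-1}$. The main obstacle is the algebraic bookkeeping: closing the symbol-level system dictated by non-resonance, verifying that every commutator and cubic residue lies in the correct $S^{a,b;c}$ class with the claimed $\rho^{-1}$ gain, and pinning down the specific coefficients $-2$ and $\frac{8}{3}$ from the values of $k_i$ at $\xi=\eta=0$.
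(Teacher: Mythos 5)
Your proposal takes essentially the same route as the paper: a bilinear $\Psi$DO normal form of the type $\rho^{-1/2}K_i[\cdot,\cdot]$ with symbols chosen so that the bilinear output matches $\rho^{-1/2}\alpha_0 u^2$, errors organized into commutator, cubic, and source families via the Leibniz identities \eqref{leib1}--\eqref{leib2} and substitution of $\ddot u$ from the equation, the constants $-2$ and $\tfrac{8}{3}$ obtained from the symbol values $k_i(0,0)$, and norm estimates via Propositions \ref{main_bilin_est_prop} and \ref{imp_L00_prop}. The extra $K_1[u,\dot u]$ in your ansatz is harmless but superfluous --- parity of the Klein--Gordon operator gives a homogeneous equation for $k_1$, and the paper takes $k_1\equiv 0$ from the start.

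One conceptual misstatement deserves correction: the quadratic interaction is in fact non-resonant \emph{everywhere}, not just away from $(\xi,\eta)=(0,0)$. The paper's denominator $q=4\xi^2+4\eta^2+4\xi\eta+3$ satisfies $q\geqslant 3$, so the inversion is globally smooth and bounded; there is no quadratic resonance to avoid at the origin. The reason to extract the symbol values $k_i(0,0)$ is different and is the actual content of the argument: the cubic residual produced by substituting $G\approx\rho^{-1/2}\alpha_0 u^2$ into the $\ddot u$ slots has a constant-coefficient part determined by $k_i(0,0)$, and this part survives as a genuine resonant cubic $\rho^{-1}\alpha_0^2(c_1 u^3 + c_2\dot u^2 u)$ that cannot be absorbed. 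Everything else is a bilinear operator whose symbol vanishes at $(0,0)$, i.e.~is in $S_0^{a,b;c}$, and Proposition \ref{imp_L00_prop} converts that vanishing into the extra $\rho^{-1/2+\delta}$ gain in $L^\infty_y$ needed for the $L^1_\rho(L^\infty_y)$ control of $\tilde{\mathcal{R}}_{quad}$. With this one correction in the explanation, your plan implements the paper's proof.
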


We construct $\mathcal{N}_{quad}$ according to the classical method 
of Shatah \cite{Sh}:
\begin{equation}
		\mathcal{N}_{quad} \ = \ \rho^{-\frac{1}{2}}K_0[u,u] + \rho^{-\frac{1}{2}}K_2[\dot u, \dot u]
		\ , \label{N_quad_defn}
\end{equation}
where $K_0,K_2$ are bilinear operators as defined on line \eqref{pdo_def},
which we also assume to be  symmetric.
 To compute them we use
the notation $[\partial_\rho^2,\rho^{-\frac{1}{2}}K]$ to denote the operator:
\begin{equation}
		\big[\partial_\rho^2,\rho^{-\frac{1}{2}}K\big] [u,v] \ =\ \partial_\rho^2 \big(
		\rho^{-\frac{1}{2}} K[u,v] \big) 
		-  \rho^{-\frac{1}{2}} K[\ddot u, v]
		-  \rho^{-\frac{1}{2}} K[u,\ddot v] 
		- 2\rho^{-\frac{1}{2}} K[\dot u, \dot v] \ . \notag
\end{equation}
Then a short calculation shows that for $\mathcal{N}_{quad}$ as defined above:
\begin{equation}
		(\Box_\mathcal{H}+1)\mathcal{N}_{quad} \ = \ T_1 + T_2 + T_3  - \frac{1}{4}\rho^{-2}
		\mathcal{N}_{quad} \ , \notag
\end{equation}
where if we set $G=(\Box_\mathcal{H}+1)u$ then:
\begin{align}
		T_1 \! &= \! \rho^{-\frac{1}{2}}\Big[  2 K_2[ (D_y^2\! +\! 1)u,(D_y^2\!+\! 1)u]
    		\! +\! 2 K_0[D_y u, D_y u] \! +\!
    		2 K_2[D_y \dot{u}, D_y \dot{u}] 
		\!-\! K_0[u,u] \!+\!
    		2K_0[\dot{u},\dot{u}] \! -\!  K_2[\dot{u},\dot{u}]
		\Big] \ , \notag\\
		T_2 \! &= \!  \rho^{-\frac{1}{2}}
		\Big[ 2K_0[G,u]   
    		\! +\! 2 K_2[G,G] \! +\! 2K_2[\dot{G},\dot{u}]  
    		\!-\! 4 K_2[(D_y^2 \!+\! 1)u,G\big) \!- \!\rho^{-2}K_2[G,u]\Big] 
		\ , \notag\\
		T_3 \! &= \! 
		\big[\partial_\rho^2,\rho^{-\frac{1}{2}}\!K_0 \big]\! [u,u ] \!\! +\!\!
		\big[\partial_\rho^2,\rho^{-\frac{1}{2}}\!K_2 \big]\! [\dot u,\dot u ]
		\!\!+\!\! 4\rho^{-\frac{3}{2}}\!K_2 [D_y^2 u,\dot u]\!\!+\!\! \rho^{-\frac{7}{2}}\!K_2 [u,\dot u]
		\!\!+\!\! \rho^{-\frac{5}{2}}\!K_2 [(D_y^2+1)u,u]\!\!+\!\!  \frac{1}{8}\rho^{-\frac{9}{2}}\!K_2 [u,u]
		\ . \notag
\end{align}
The term $T_1$ is no better than the original quadratic
nonlinearity,  so one chooses the $K_i$ specifically to achieve 
$T_1=\rho^{-\frac{1}{2}}\alpha_0 u^2$.
A standard calculation (see Section 7.8 of \cite{H1}) shows that to do this one must define the symbols
of $K_0$ and $K_2$ as follows:
\begin{equation}
		k_0 \ = \ \alpha_0(1-2\xi\eta )q^{-1}\ , \qquad
		k_2 \ = \ 2\alpha_0 q^{-1}\ , 
		\qquad\qquad \hbox{where \ \ } q \ = \ (4\xi^2 + 4\eta^2 + 4\xi\eta + 3) \ . \notag
\end{equation}
Notice that these symbols  are such that $k_0\in S^{1,1;2}$ and $k_2\in S^{0,0;2}$.
To estimate $\mathcal{N}_{quad}$ and the error terms $T_2$ and $T_3$ listed above 
we will use:
\begin{prop}[Quadratic NF Estimates]\label{quad_error_prop}
Let 
$(\Box_\mathcal{H}+1)u=\rho^{-\frac{1}{2}}\alpha_0 u^2 + \rho^{-1}\beta u^3 + F:=G$. 
Then one has the estimate:
\begin{equation}
		\rho^\frac{1}{2} \lp{ G}{S(\rho)}  \!+\!  
		\rho^\frac{1}{2}\lp{ \dot G}{\dot S(\rho)}\!+\!
		\lp{\dot u}{\dot S(\rho)}  
		\!+\!
		\lp{\ddot u \!+\! D_y^2u }{S(\rho)}
		 \!\lesssim\!  \mathcal{Q}_1(
		 \lp{u}{S(\rho)},   \lp{F}{S(\rho)})
		\!+\!  \lp{\rho^\frac{1}{2} {F}}{ S(\rho)}
		\!+\! \lp{\rho^\frac{1}{2} \dot{F}}{\dot S(\rho)} 
		\ . \label{quad_nf_part1} 
\end{equation}
In addition if we set $G=\rho^{-\frac{1}{2}}\alpha_0 u^2 + \rho^{-1}\beta u^3 + F$
in  $T_2$ and $T_3$  above, then one has the  fixed time estimates:
\begin{align}
		\rho  \lp{ T_2}{S(\rho)} + \rho^{\frac{3}{2}} \lp{ T_3}{S(\rho)}
		\ &\lesssim \ 
		\hbox{(R.H.S.)}\eqref{main_nf_ests_quad'}
			\ , \label{quad_nf_error1}\\
		\lp{T_2 +\rho^{-1}( 2\alpha_0^2 u^3 -\frac{8}{3}\alpha_0^2\dot u^2 u)}{L^\infty_y} \ &\lesssim \ 
		\rho^{-\frac{3}{2}+\delta}\hbox{(R.H.S.)}\eqref{main_nf_ests_quad'}
		\ . \label{quad_nf_error2}
\end{align}
\end{prop}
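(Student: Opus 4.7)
The plan is to prove the three bounds in sequence.  For \eqref{quad_nf_part1}, I would estimate $\rho^{1/2}G$ directly by expanding $\rho^{1/2}G = \alpha_0 u^2 + \rho^{-1/2}\beta_0 u^3 + \rho^{-1/2}\beta_1(\rho y)u^3 + \rho^{1/2}F$ and applying the algebra estimate from \eqref{alg2}; the coefficient bound \eqref{P_k_beta} places $\beta_1(\rho y)\in S(\rho)$. The bound on $\rho^{1/2}\dot G$ is obtained analogously, with the $\dot u$ factors handled via the mixed algebra $\lp{uv}{\dot S}\lesssim \lp{u}{S}\lp{v}{\dot S}$ and the time derivative of $\beta_1(\rho y)$ estimated again via \eqref{P_k_beta}.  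The bound on $\lp{\dot u}{\dot S}$ follows from the second estimate of \eqref{dot_S_bnds} applied to $(\Box_\mathcal{H}+1)u=G$, with the norms of $G$ on the right-hand side already controlled.  Finally, rearranging the identity \eqref{box_conj} gives $\ddot u + D_y^2 u = -u - \tfrac{1}{4}\rho^{-2}u + G$, so $\lp{\ddot u + D_y^2 u}{S(\rho)}\lesssim \lp{u}{S(\rho)} + \lp{G}{S(\rho)}$.

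For \eqref{quad_nf_error1}, I will apply the bilinear $\Psi$DO estimates of Proposition \ref{main_bilin_est_prop} summand-by-summand.  The symbols $k_0\in S^{1,1;2}$ and $k_2\in S^{0,0;2}$ satisfy the hypotheses of \eqref{bilin_est1}--\eqref{bilin_est3}, so $K_0,K_2$ map $S\times S\to S$, $S\times\dot S\to S$, and $\dot S\times\dot S\to S$ in the needed combinations.  For example $\rho\lp{\rho^{-1/2}K_0[G,u]}{S(\rho)}\lesssim \lp{\rho^{1/2}G}{S(\rho)}\lp{u}{S(\rho)}$ is acceptable by step one; the term $\rho^{-1/2}K_2[\dot G,\dot u]$ uses the $\dot S\times\dot S\to S$ embedding with $\dot G$ placed in $\dot S$ by the preceding step; and $K_2[(D_y^2+1)u,G]$ is viewed as a bilinear operator with symbol $(\xi^2+1)k_2\in S^{2,0;2}$, which again obeys the relevant index conditions.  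For $T_3$, each term carries an extra factor of $\rho^{-1}$ coming from either differentiating $\rho^{-1/2}$, from the $\rho^{-1}\partial_1 K$ or $\rho^{-1}\partial_2 K$ commutator pieces in \eqref{leib2}, or from the explicit $\rho^{-j}$ weights displayed, which accommodates the $\rho^{3/2}$ weighting.

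For the refined $L^\infty$ identity \eqref{quad_nf_error2}, I would apply Proposition \ref{imp_L00_prop} to each bilinear term in $T_2$.  The key observation is that $K_2[(D_y^2+1)u, G]$ is a bilinear $\Psi$DO in $(u,G)$ with symbol $l=(\xi^2+1)k_2\in S^{2,0;2}$ and $l(0,0)=k_2(0,0)=2\alpha_0/3$, so its leading-order $L^\infty$ form is $k_2(0,0)\,uG$ rather than $k_2(0,0)(D_y^2+1)u\cdot G$.  Using also $k_0(0,0)=\alpha_0/3$, one obtains
\[
T_2 \ = \ \rho^{-1/2}\!\left[\tfrac{2\alpha_0}{3}uG + \tfrac{4\alpha_0}{3}G^2 + \tfrac{4\alpha_0}{3}\dot G\,\dot u - \tfrac{8\alpha_0}{3}uG\right] + O(\rho^{-3/2+\delta}) \ = \ \rho^{-1/2}\!\left[-2\alpha_0 uG + \tfrac{4\alpha_0}{3}\dot G\,\dot u\right] + O(\rho^{-3/2+\delta}),
\]
since $\rho^{-1/2}G^2=O(\rho^{-3/2})$.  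Substituting the leading expansions $G\approx \rho^{-1/2}\alpha_0 u^2$ and $\dot G\approx 2\rho^{-1/2}\alpha_0 u\dot u$, whose remainders are again $O(\rho^{-3/2+\delta})$, produces the claimed identity $T_2 = -2\rho^{-1}\alpha_0^2 u^3 + \tfrac{8}{3}\rho^{-1}\alpha_0^2 \dot u^2 u + O(\rho^{-3/2+\delta})$.

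The main obstacle is the identification in the third step of the correct composite bilinear operator, namely treating $K_2\circ (D_y^2+1)$ as a single $\Psi$DO with symbol in $S^{2,0;2}$ rather than substituting the equation to eliminate $(D_y^2+1)u$ in favor of $\ddot u$.  The latter approach introduces spurious $D_y^2 u\cdot G$ terms that are difficult to control in $L^\infty$ at high frequencies, whereas the former bypasses this entirely by letting \eqref{imp_L00_est} capture the symbol's value at zero directly.
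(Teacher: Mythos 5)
Your plan follows the same route as the paper: estimate $G,\dot G,\dot u,\ddot u+D_y^2 u$ directly, feed these into the bilinear $\Psi$DO estimates \eqref{bilin_est1}--\eqref{bilin_est3} for $T_2,T_3$, and use \eqref{imp_L00_est} with composite symbols such as $(\xi^2+1)k_2\in S^{2,0;2}$ to extract the leading $L^\infty$ behavior of $T_2$. Your computation of the leading coefficient $\rho^{-1/2}\alpha_0[-2Gu+\tfrac{4}{3}\dot G\dot u+\tfrac{4}{3}G^2]$ and the subsequent substitution $G\approx\rho^{-1/2}\alpha_0 u^2$, $\dot G\approx 2\rho^{-1/2}\alpha_0 u\dot u$ are what the paper does, so \eqref{quad_nf_error2} is fine.

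There is, however, a genuine gap in your treatment of $T_3$, which you gloss over with ``each term carries an extra factor of $\rho^{-1}$ \dots which accommodates the $\rho^{3/2}$ weighting.'' Tracking the $\rho$-weights is not enough. After expanding the commutators $[\partial_\rho^2,\rho^{-\frac{1}{2}}K_2][\dot u,\dot u]$ via \eqref{leib2}, you indeed produce terms with a $\rho^{-3/2}$ weight, but among them are terms of the form $\rho^{-3/2}K_2[\ddot u,\dot u]$: the second time derivative arises from $\partial_\rho$ acting on a $\dot u$ entry inside the $\partial_\rho(\rho^{-1/2}K_2[\dot u,\dot u])$ piece of the commutator. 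The bilinear estimates \eqref{bilin_est1}--\eqref{bilin_est3} require inputs in $S(\rho)$ or $\dot S(\rho)$, and $\ddot u$ lies in neither: its principal part $D_y^2 u$ has a derivative count incompatible with both spaces. This is precisely why you proved the bound on $\lp{\ddot u+D_y^2 u}{S(\rho)}$ in your first step, but you never use it. The missing step is to write $\ddot u=-D_y^2 u+(\ddot u+D_y^2 u)$, which replaces $K_2[\ddot u,\dot u]$ by $K_2[D_y^2 u,\dot u]$ (a bilinear operator with symbol $\xi^2 k_2\in S^{2,0;2}$ applied to $S\times\dot S$ inputs) plus $K_2[(\ddot u+D_y^2 u),\dot u]$ (an $S\times\dot S$ input with the original $S^{0,0;2}$ symbol). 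Only after this substitution do the estimates \eqref{bilin_est1}--\eqref{bilin_est3} close.

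Two smaller points. First, you prove the $\dot S$ bound for $\dot G$ before the $\dot S$ bound for $\dot u$, but the $u\dot u$ term in $\dot G$ requires the $\dot u$ bound via \eqref{alg2}; the order must be reversed, as in the paper. Second, it is $\rho^{-\frac{1}{2}}\beta_1(\rho y)$, not $\beta_1(\rho y)$, that belongs to $S(\rho)$ uniformly (the $\rho^{-\delta}H^1_y$ component of $\beta_1(\rho y)$ alone is of size $\rho^{1/2-\delta}$); the $\rho^{-\frac{1}{2}}$ factor must be paired with the coefficient rather than absorbed elsewhere. Finally, your closing paragraph mislocates the ``main obstacle'': treating $K_2\circ(D_y^2+1)$ as a composite symbol is exactly what the paper does and involves no tension; the real subtlety in this proposition is the $\ddot u$ elimination in $T_3$.
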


First we shall use this Proposition to demonstrate Theorem \ref{quad_nf_thm}.

\begin{proof}[Proof that Proposition \ref{quad_error_prop} implies Theorem \ref{quad_nf_thm}]
With the choice \eqref{N_quad_defn} the estimate for $\rho^\frac{1}{2}\mathcal{N}_{quad}$
on line \eqref{main_nf_ests_quad'} is an immediate consequence of 
\eqref{bilin_est1} and \eqref{bilin_est3} and the estimate for $\dot u$ on line \eqref{quad_nf_part1}.

The identity \eqref{main_nf_iden_quad'} is satisfied if we define:
\begin{equation}
		\mathcal{R}_{quad} \ = \  T_2+T_3 - \frac{1}{4}\rho^{-2}
		\mathcal{N}_{quad} \ , \qquad
		\td{\mathcal{R}}_{quad} \ = \ (2\rho^{-1}\alpha_0^2 u^3 - \frac{8}{3}\rho^{-1}
		\alpha_0^2\dot{u}^2 u +T_2) +T_3 - \frac{1}{4}\rho^{-2}
		\mathcal{N}_{quad} \ . \notag
\end{equation}
Then the estimate for $(\mathcal{R}_{quad},\dot{\mathcal{R}}_{quad})$ on 
line \eqref{main_nf_ests_quad'} follows immediately from the second and third embeddings
on line \eqref{easy_S_to_N}, estimate \eqref{quad_nf_error1}, and the estimate for $\mathcal{N}_{quad}$
already shown. The estimate for $\td{\mathcal{R}}_{quad}$ on 
line \eqref{main_nf_ests_quad'} is immediate from the estimate for $T_3$ on
line \eqref{quad_nf_error1},
estimate \eqref{quad_nf_error2}, and  the estimate for $\mathcal{N}_{quad}$.
\end{proof}

Now we prove Proposition \ref{quad_error_prop} which is done separately for the various estimates
involved. First the preliminary bounds.

\begin{proof}[Proof of estimate \eqref{quad_nf_part1}]
We consider each term on the LHS of \eqref{quad_nf_part1} separately.

\case{1}{The $S(\rho)$ estimate for $G$}
For the part of $G$ which contains $F$ this is immediate. 
For the quadratic terms in $G$ the bound is also immediate from
the first algebra estimate on line \eqref{alg2}. For the cubic part of $G$  we can
again use \eqref{alg2} once we know $\rho^{-\frac{1}{2}}\beta_1(\rho y)\in S(\rho)$
uniformly.
The main thing to check  is the $H^1_y$ portion of the $S(\rho)$ norm which has the worst
behavior. For this  
we compute:
\begin{equation}
		\lp{\beta_1}{H^1_y} \ \lesssim \ \rho \lp{(\beta_1,\beta_1')}{L^2_y} 
		 \ \lesssim \ \rho^\frac{1}{2} \ ,
		 \qquad \hbox{where \ \ } \beta_1' = (\partial_x \beta_1) (\rho y) \ , 
		  \notag
\end{equation}
which suffices. Similar $H^1_y$ bounds for $\partial_\rho (\rho^{-\frac{1}{2}}\beta_1)$ 
and $D_y(\rho^{-\frac{1}{2}}\beta_1)$  are immediate.

\case{2}{The $\dot S(\rho)$ estimate for $\dot u$}
Using the
second bound on line \eqref{dot_S_bnds}, this boils down to estimating
$ \lp{G}{\rho^\delta H^1_y\cap L^2_y\cap L^\infty_y}$   which is 
already contained in the $S(\rho)$ bound for $G$ above.

\case{3}{The $ S(\rho)$ estimate for $\ddot u + D_y^2u$}
Since 
$\ddot u + D_y^2 u=G-(1+\frac{1}{4}\rho^{-2})u$ the bound follows from the $ S(\rho)$ 
estimates already proved for $G$ and $u$.

\case{4}{The $\dot S(\rho)$ estimate for $\dot G$}
Again it suffices to focus attention on the nonlinear terms. Differentiating the 
quadratic and cubic terms in $G$ with respect to $\partial_\rho$ and
using the second algebra
estimate on line \eqref{alg2}, the desired bound follows at once from the 
$\dot{S}(\rho)$ estimate for $\dot u$ and the $S(\rho)$ estimate for 
 $\rho^{-\frac{1}{2}}\beta_1$ proved previously.
\end{proof}

Finally, we prove the error bounds for $T_2$ and $T_3$ in Proposition \ref{quad_error_prop}.

\begin{proof}[Proof of estimates  \eqref{quad_nf_error1} and  \eqref{quad_nf_error2}]
There are a number of cases.

\case{1}{The $S(\rho)$ estimate for $ T_2$ on line \eqref{quad_nf_error1}} 
This follows immediately from
the $S(\rho)$ and $\dot S(\rho)$ bounds for $G$, $\dot G$, and $\dot u$ on line
\eqref{quad_nf_part1} and the bilinear estimates \eqref{bilin_est1}--\eqref{bilin_est3}.

\case{2}{The improved $L^\infty_y$ estimate for $T_2$ on line \eqref{quad_nf_error2}}
Here we use estimate \eqref{imp_L00_est} which shows that in light of 
\eqref{quad_nf_part1}  we already have 
estimate \eqref{quad_nf_error2} for the expression
$T_2 - \rho^{-\frac{1}{2}}\alpha_0 \big[ -2 Gu +\frac{4}{3} \dot G \dot u +\frac{4}{3} G^2 \big]$. 
Thus, we only need to prove the bound:
\begin{equation}
		\lp{  \rho^{-\frac{1}{2}}\alpha_0 \big[2 Gu -\frac{4}{3} \dot G \dot u -\frac{4}{3} G^2 \big]
		- \rho^{-1}\alpha_0^2 (2 u^3 - \frac{8}{3} \dot u^2 u)}{L^\infty_y}
		\! \lesssim \! \rho^{-\frac{3}{2}}\big(
		\mathcal{Q}_2\big(\lp{u}{S(\rho)}, \lp{\rho (F,\dot F)}{L^\infty_y}\big)
		\!+\! \lp{\rho (F,\dot F)}{L^\infty_y}^2 \big)
		 \ . \notag
\end{equation}
Expanding the formula for $G$ into the first three terms on the LHS, and canceling
the $O(\rho^{-1})$ terms, this estimate follows easily from inspection.

\case{3}{The $S(\rho)$ estimate for $ T_3$ on line \eqref{quad_nf_error1}}
The estimate for all but the first  
two terms follows immediately from \eqref{bilin_est1} and \eqref{bilin_est2}, and the 
estimate for $\dot u$ on line \eqref{quad_nf_part1}.
For the first two terms we need to expand the commutator with $\partial_\rho^2$. The important
observation here is that by formula \eqref{leib2} when  $\partial_\rho$ falls on the kernel
$K({}^1\!D, {}^2\!D)$ it preserves its $S^{a,b;c}$ class and adds an inverse power of $\rho$. Therefore,
modulo a factor of $\rho^{-\frac{3}{2}}$ or better the errors are one of four types:
\begin{align}
		K[S(\rho),S(\rho)] &\hbox{\ \ and \ } k \in S^{1,1;2} \ , \qquad
		&K[S(\rho),\dot S(\rho)] &\hbox{\ \ and \ } k \in S^{1,1;2} \ , \notag\\
		K[\dot S(\rho),\dot S(\rho)] &\hbox{\ \ and \ } k \in S^{0,0;2} \ , \qquad
		&K[\dot S(\rho),\partial_\rho \dot S(\rho)] &\hbox{\ \ and \ } k \in S^{0,0;2} \ . \notag
\end{align} 
All but the last case are covered by the general class of estimates \eqref{bilin_est1}-\eqref{bilin_est3}.
To handle the last case above we need to replace the generic marker
$\partial_\rho \dot S(\rho)$ with its actual value, i.e.~$\ddot u$. Using the estimate 
for $\ddot u + D_y^2 u$ on line \eqref{quad_nf_part1} we can trade the last expression
on the previous line for a sum of:
\begin{equation}
		K[\dot S(\rho),S(\rho)] \hbox{\ \ and \ } k \in S^{0,2;2} \ , \qquad
		K[\dot S(\rho),S(\rho)] \hbox{\ \ and \ } k \in S^{0,0;2} \ , \notag
\end{equation}
which are again of the  form covered by \eqref{bilin_est1}-\eqref{bilin_est3}.
\end{proof}

%-------------------------------------------------------------------------
%%%%%%%%%%%%%%%%%%%%%%%%%%%%%% 
%-------------------------------------------------------------------------

\section{The Cubic Correction}\label{Cubic_sect}

Our main result here is:

\begin{thm}[Cubic Normal Forms]\label{cubic_nf_thm}
Let $w$ be sufficiently smooth and well localized 
solution to the equation:
\begin{equation}
		(\Box_\mathcal{H} + 1)u \ = \     \rho^{-1}\beta_1 u^3 + F \ . \label{cubic_part}
\end{equation}
Then there exists nonlinear functionals  
$\mathcal{N}_{cubic}=\mathcal{N}_{cubic}(u,\dot u)$ and 
$\mathcal{R}_{cubic}=\mathcal{R}_{cubic}(u,\dot u,F,\dot F)$
such that the following algebraic equation holds:
\begin{equation}
		(\Box_\mathcal{H}+1)\mathcal{N}_{cubic} \ =\  \rho^{-1}\beta_1 u^3 
		+ \mathcal{R}_{cubic} \ ,
		\notag
\end{equation}
and one has the  following  estimates:
\begin{equation}
		\lp{\mathcal{N}_{cubic} }{S[1,T]} 
		+ \lp{\mathcal{R}_{cubic} }{N[1,T]}\ \lesssim \ 
		\mathcal{Q}_2\big(\lp{u}{S[1,T]}, \lp{F}{N[1,T]},\lp{\rho \dot F}{L^\infty_\rho(L^2_y)[1,T]}\big)
		+  \mathcal{Q}_3\big( \lp{F}{N[1,T]}\big) 
		\ . \label{main_cubic_nf_ests}
\end{equation}
\end{thm}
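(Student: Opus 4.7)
The plan is to construct $\mathcal{N}_{cubic}$ as a trilinear paraproduct-type expression that captures the resonant interaction between the spatially high-frequency coefficient $\beta_1(\rho y)$---by Lemma \ref{P_k_beta} concentrated at frequency scales $k \approx \ln_2(\rho)$---and the dominant temporal phases $e^{\pm i\rho}$, $e^{\pm 3 i\rho}$ produced by cubing the WKB form of $u$. First I would dyadically decompose $\rho^{-1}\beta_1 u^3$ according to the frequency of $\beta_1$ relative to $u$. The pieces in which at least one factor of $u$ sits at frequency comparable to or above that of $\beta_1$ are harmless: the high-low product estimates of Lemma \ref{used_prod_lem} combined with the Duhamel bound \eqref{duhamel} place them directly into $\mathcal{R}_{cubic}$ with the desired $\mathcal{Q}_3$ control. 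This reduces the problem to the paraproduct $\sum_k \rho^{-1}(P_k\beta_1)(P_{<k-C}u)^3$.

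Next I would extract the WKB form of each low-frequency factor. At frequencies well below $\ln_2(\rho)$ the equation \eqref{cubic_part} reduces, modulo errors controllable by $F$ and by the cubic self-interaction through the second bound of \eqref{dot_S_bnds}, to an ODE $\ddot{u}_{low} + u_{low} \approx 0$, so that $u_{low} = a_+(y)e^{i\rho} + a_-(y)e^{-i\rho} + \mathrm{rem}$ with $a_\pm \in S(\rho)$. Cubing yields sources of the schematic form $c_\pm \rho^{-1}(P_k\beta_1)A_\pm e^{\pm i\rho}$ and $d_\pm \rho^{-1}(P_k\beta_1)B_\pm e^{\pm 3 i\rho}$, where $A_\pm, B_\pm$ are trilinear in the amplitudes $a_\pm$. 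I would then define $\mathcal{N}_{cubic}$ as the explicit Duhamel inversion of $(\Box_\mathcal{H}+1)$ applied to these oscillating sources, carried out in the original $(t,x)$ coordinates via the conjugation \eqref{box_conj}, so that the resulting integrals are precisely those analyzed in Lemma \ref{SP_lem}: the $e^{\pm 3 i\rho}$ sources have genuine stationary points at $\xi = \pm\sqrt 8$ (reflecting resonance between the $3\rho$-oscillation and the dispersion relation of $\Box+1$) and yield an $O(\rho^{-1/2})$ amplitude inside the light cone, while the $e^{\pm i\rho}$ sources are non-stationary in $s$ and are handled by integration by parts. The pointwise bound \eqref{main_SP_est} and its derivative analogues, passed through the bilinear $\Psi$DO calculus of Proposition \ref{main_bilin_est_prop}, produce the $\lp{\mathcal{N}_{cubic}}{S[1,T]}$ portion of \eqref{main_cubic_nf_ests}.

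The remainder $\mathcal{R}_{cubic}$ then gathers (i) the high-frequency paraproduct pieces already mentioned, (ii) the error between $u_{low}$ and its WKB expansion, (iii) the analogous error for $\dot u_{low}$ versus $i(a_+ e^{i\rho} - a_- e^{-i\rho})$, (iv) the $-\tfrac14\rho^{-2}$ correction coming from \eqref{box_conj}, and (v) commutator terms arising when $\partial_\rho$ falls on the inversion kernel. Each is bounded in $N[1,T]$ using Lemmas \ref{basic_norm_lemma} and \ref{used_prod_lem}, the coefficient estimate \eqref{P_k_beta}---whose Schwartz-type decay factor $2^{-N(k-\ln_2\rho)_+}$ makes the sum over dyadic scales of $\beta_1$ converge---and the auxiliary hypothesis $\rho\dot F \in L^\infty_\rho(L^2_y)$ appearing in \eqref{main_cubic_nf_ests}, which enters precisely when bounding $\partial_\rho a_\pm$. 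The polynomial combination on the right-hand side of \eqref{main_cubic_nf_ests} encodes the quadratic WKB errors (linear in $F$ or $\rho \dot F$, quadratic in $u$) and the cubic self-errors coming from cubing the WKB remainder.

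The hardest step will be producing the WKB decomposition of $(u_{low},\dot u_{low})$ cleanly enough---amplitudes $a_\pm$ lying in $S(\rho)$ and errors tracked in $N(\rho)$, uniformly in the dyadic scale $k$---so as to survive multiplication by two further low-frequency factors and by a high-frequency $\beta_1$ without losing a logarithm. The interplay between the $S$, $\dot S$ and $N$ norms encoded in \eqref{dot_S_bnds}, together with the need to feed $\dot F$ information into the bound on $\partial_\rho a_\pm$, is where the technical heart of the argument lies.
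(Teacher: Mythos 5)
Your high-level plan is close in spirit to the paper's, and the central intuition — that resonance between the $e^{\pm 3i\rho}$ temporal phase and the $\pm\sqrt 8$ spatial frequency of $\beta_1$ is what $\mathcal{N}_{cubic}$ has to capture — is exactly right. But there are two structural differences that are not cosmetic, and the second of them is a genuine gap.

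First, the paper never explicitly extracts WKB amplitudes $a_\pm$. Its ansatz \eqref{cubic_NF_ansatz} is a cubic polynomial in $(u_{<k},\dot u_{<k})$ with coefficient functions $f_{i,k}$; these coefficients are then forced by \eqref{main_cubic_comp} to satisfy a coupled linear system, which is diagonalized by the changes of variables $(f_1,\dots,f_4)\to(F_1,F_2,G_1,G_2)\to(K_1^\dagger,K_2^\dagger)\to(K_1,K_2)$ into the two scalar resonance equations \eqref{0_res_eq}--\eqref{sqrt8_res_eq}. This linear algebra is precisely the dual of your WKB decomposition (a basis change from $\{u^3,\,u^2\dot u,\,u\dot u^2,\,\dot u^3\}$ to $\{(u\mp i\dot u)^3,\ldots\}$), but it sidesteps the difficulties you flagged about keeping $a_\pm$ in $S(\rho)$: with the exact identification $a_\pm=\tfrac12(u\mp i\dot u)e^{\mp i\rho}$ the decomposition of $(u,\dot u)$ is algebraic, and the ``error between $u_{low}$ and its WKB expansion'' that you list as item (ii) of your remainder does not actually exist. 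The genuine source of error — the appearance of $\ddot u+u$ and $\partial_\rho(\ddot u+u)$ when $(\Box_\mathcal{H}+1)$ falls on the ansatz — is accounted for systematically in \eqref{cubic_error_1}--\eqref{cubic_error_4} without ever splitting $u_{low}$ into oscillatory amplitudes.

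Second, and more seriously, you propose to construct $\mathcal{N}_{cubic}$ everywhere by Duhamel inversion in $(t,x)$ followed by Lemma~\ref{SP_lem}, but that lemma applies only to the source truncated to $t\approx\lambda=2^k$. The paper's key technical device — which your plan omits entirely — is the four-way partition of time into ``high'' ($\rho\ll 2^k$), ``dispersive'' ($\rho\approx 2^k$), ``elliptic'' ($2^k\ll\rho\leqslant 2^{2k}$), and ``smooth'' ($\rho>2^{2k}$). Stationary phase is used only on the dispersive piece and only for the $\pm\sqrt 8$ resonance; on the elliptic and high pieces one uses the algebraic inversion $(D_y^2-8)^{-1}$, which is smooth exactly because $2^k$ is dyadically separated from $\rho$; on the smooth piece one does not invert at all and puts the source $e^{3i\rho}h_k^{smooth}\beta_{1,k}$ directly into the remainder, controlled via \eqref{reduced_symbol_bnds1}. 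Similarly the zero-resonance $K_1$ is never handled by Duhamel — it is solved by the direct elliptic inversion $D_y^{-2}\beta_{1,k}$, which is regular because $k>0$ has been cut away. Without this partition, the stationary-phase estimate \eqref{main_SP_est} does not control the contribution of $h_k^{smooth}\beta_{1,k}$, and the symbol bounds \eqref{reduced_symbol_bnds2}--\eqref{reduced_symbol_bnds3} would not close, so the dyadic sum in \eqref{fixed_time_dyad_bnds1}--\eqref{fixed_time_dyad_bnds2} would lose logarithms or worse.

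Finally a smaller point: you cite Proposition~\ref{main_bilin_est_prop} to pass from pointwise symbol bounds to $S$-norm estimates, but that bilinear $\Psi$DO machinery belongs to the quadratic normal form. The cubic correction is estimated through the $S^{\frac12}_N$ symbol class and the high-low trilinear paraproduct estimates \eqref{SN_mult1}--\eqref{SN_mult3}, specifically because the $f_{i,k}$ are $O(\rho^{1/2})$ in absolute value but have the nontrivial derivative improvement recorded in \eqref{sym}; the $\rho^{1/2}$ loss is then recovered by the $\rho^{-1}$ prefactor in \eqref{cubic_NF_ansatz} and the semi-classical gains from Lemma~\ref{used_prod_lem}.
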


We will construct the correction as a dyadic sum 
$\mathcal{N}_{cubic}=\sum_{k> 0}   \mathcal{N}_k$ with dyadic errors
$\mathcal{R}_k$. Our ansatz for each dyadic piece is:
\begin{equation}
		 \mathcal{N}_k \ = \   \frac{1}{\rho} \big( f_{1,k} (u_{<k})^3 +
        		f_{2,k} (u_{<k})^2\dot{u}_{<k}+   f_{3,k} u_{<k}(\dot{u}_{<k})^2
         	+   f_{4,k} (\dot{u}_{<k})^3\big) \ . \label{cubic_NF_ansatz}
\end{equation}
The functions $f_{i,k}$ will be of the following type.

\begin{defn}[$S^\frac{1}{2}_N$ Symbols]
We say a symbol (function) $f(\rho,y)$ is in $S^\frac{1}{2}_N$  
if it obeys the uniform fixed time bounds:
\begin{equation}
		|\nabla^\alpha f| \ \lesssim \ \rho^{(\frac{1-|\alpha|}{2})_+} \ , \qquad
		\hbox{where\ \ } \nabla f = (\dot{f},D_y f) \ ,
		\qquad \hbox{for all\ \ }  |\alpha|\leqslant N \ .
		\label{sym}
\end{equation}
We write the associated norm   as $\llp{f(\rho)}{S^\frac{1}{2}_N}$.
\end{defn}

The first estimate we will need for such coefficients is the following,
which follows directly from \eqref{SN_mult1} and the inclusion
$S(\rho)\subseteq \dot S(\rho)$, and $\lp{\dot u}{\dot S(\rho)}\lesssim 
\mathcal{Q}_1(\lp{u}{S(\rho)})+\lp{F}{N(\rho)}$ which follows easily from  \eqref{dot_S_bnds}
and \eqref{alg2}.

\begin{lem}[$S(\rho)$ space bound for the $\mathcal{N}_k$]\label{symbol_lem}
Let $f_k \in S^\frac{1}{2}_2$ with $k>0$, and let
$u_{<k},v_{<k},w_{<k}$  be sufficiently smooth and well localized. Then one has the
product estimates:
\begin{equation}
		\lp{\rho^{-1}f_k u_{<k} v_{<k} w_{<k}}{S(\rho)} \ \lesssim \ \rho^{-\delta}
		\llp{f_k(\rho)}{S^\frac{1}{2}_2} \lp{u_{<k}}{A}\, \lp{v_{<k} }{B}\, \lp{w_{<k}}{C} \ , 
		\label{f_k_product}
\end{equation}
where $A,B,C$ denotes any combination of the norms $S(\rho)$ and $\dot S(\rho)$.
\end{lem}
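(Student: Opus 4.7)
The plan is to deduce Lemma \ref{symbol_lem} directly from estimate \eqref{SN_mult1} applied with $n=3$, after two elementary reductions. First, since $S(\rho)\subseteq \dot S(\rho)$ with uniform constant, I can replace each of the norms $\lp{u_{<k}}{A},\lp{v_{<k}}{B},\lp{w_{<k}}{C}$ by the corresponding $\dot S(\rho)$ norm regardless of whether $A$, $B$, $C$ is $S(\rho)$ or $\dot S(\rho)$. This reduces matters to showing that the $L^\infty_y$-type factor in \eqref{SN_mult1} is bounded by $\rho^{-\delta}\llp{f_k}{S^\frac{1}{2}_2}$.

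For the second reduction I simply check each entry against \eqref{sym}. The zeroth-order term is controlled by
\[
\lp{\rho^{-1}f_k}{L^\infty_y} \ \lesssim \ \rho^{-1}\cdot \rho^{1/2}\,\llp{f_k}{S^\frac{1}{2}_2} \ = \ \rho^{-1/2}\llp{f_k}{S^\frac{1}{2}_2} \ \leqslant \ \rho^{-\delta}\llp{f_k}{S^\frac{1}{2}_2},
\]
provided $\delta\leqslant 1/2$. For the derivative terms, the definition of $S^\frac{1}{2}_N$ gives $|\nabla^\alpha f_k|\lesssim 1$ whenever $|\alpha|\geqslant 1$, so that each of $D_y f_k$, $D_y^2 f_k$, $\dot f_k$, $D_y\dot f_k$ is uniformly bounded by $\llp{f_k}{S^\frac{1}{2}_2}$ in $L^\infty_y$. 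Multiplying by the prefactor $\rho^{-\delta}$ from \eqref{SN_mult1} immediately yields the uniform bound $\rho^{-\delta}\llp{f_k}{S^\frac{1}{2}_2}$ for each of the five entries in the sum.

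Substituting these bounds into \eqref{SN_mult1} and using $S(\rho)\subseteq \dot S(\rho)$ for the low-frequency factors produces the claimed estimate \eqref{f_k_product}. There is no real obstacle here: the only mild point is remembering that the derivative bounds in the $S^\frac{1}{2}_N$ class saturate at order one (because $(\tfrac{1-|\alpha|}{2})_+=0$ for $|\alpha|\geqslant 1$), which is exactly what makes the two-derivative version of \eqref{SN_mult1} applicable with only $N=2$ symbol seminorms. Note that the auxiliary bound $\lp{\dot u}{\dot S(\rho)}\lesssim \mathcal{Q}_1(\lp{u}{S(\rho)})+\lp{F}{N(\rho)}$ mentioned alongside the lemma is not used in this proof itself, but will be needed later when applying \eqref{f_k_product} to expressions containing $\dot u_{<k}$ factors as in the ansatz \eqref{cubic_NF_ansatz}.
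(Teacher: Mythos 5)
Your proof is correct and takes exactly the route the paper indicates: replace every $S(\rho)$ factor by its $\dot S(\rho)$ majorant via the inclusion $S(\rho)\subseteq\dot S(\rho)$, invoke \eqref{SN_mult1} with $n=3$, and then verify each of the five coefficient entries is $\lesssim \rho^{-\delta}\llp{f_k}{S^\frac{1}{2}_2}$ using the saturation of \eqref{sym} at $|\alpha|=1$ together with $\lp{\rho^{-1}f_k}{L^\infty_y}\lesssim\rho^{-1/2}\llp{f_k}{S^\frac{1}{2}_2}\leq\rho^{-\delta}\llp{f_k}{S^\frac{1}{2}_2}$. One small wording slip: you describe $\rho^{-\delta}$ as a ``prefactor'' that you multiply in, whereas in \eqref{SN_mult1} it already sits inside the $L^\infty_y$ norms attached to the differentiated symbol entries; the arithmetic you carry out is nonetheless the right one. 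Your observation that the auxiliary estimate $\lp{\dot u}{\dot S(\rho)}\lesssim \mathcal{Q}_1(\lp{u}{S(\rho)})+\lp{F}{N(\rho)}$ is only needed downstream (to convert $\dot S(\rho)$ norms of $\dot u_{<k}$ factors back into $S(\rho)$ and $N(\rho)$ data) and not for the lemma itself is also correct and helps clarify a mildly ambiguous sentence in the paper.
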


In light of  \eqref{f_k_product}, to prove Theorem \ref{cubic_nf_thm}
it suffices to show:

\begin{prop}[Dyadic Cubic Normal Forms]\label{cubic_nf_thm_red}
Let $u$ be sufficiently smooth and well localized and solve equation \eqref{cubic_part}.
Then there exists functions $f_{i,k}$ such that $f_{i,k}\equiv 0$ for $2^k\ll \rho^{\frac{1}{2}}$,
and if one defines $\mathcal{N}_k$
as on line \eqref{cubic_NF_ansatz} one has:
\begin{equation}
		(\Box_\mathcal{H}+1)\mathcal{N}_k- \rho^{-1}(\beta_1)_k (u_{<k})^3
		 \ = \  \mathcal{R}_k \ ,
		\label{cubic_remainder}
\end{equation}
and one has the fixed time dyadic sum estimates:
\begin{align}
		\sum_k  \llp{f_{i,k}(\rho)}{S^\frac{1}{2}_2} \ &\lesssim \ \ln(\rho) \ , 
		  \label{fixed_time_dyad_bnds1}\\
		 \sum_k \lp{\mathcal{R}_k(\rho) }{N(\rho)} \ &\lesssim \  
		 \mathcal{Q}_2\big(\lp{u(\rho)}{S(\rho)},\lp{F(\rho)}{N(\rho)},\lp{\rho \dot F(\rho)}{L^2_y}\big) 
		 +   \mathcal{Q}_3\big( \lp{F}{N(\rho)}\big)
		 \ . \label{fixed_time_dyad_bnds2}
\end{align}
\end{prop}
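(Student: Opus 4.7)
My plan is to construct the symbols $f_{i,k}$ by explicit solution of a coupled system of ODEs in $\rho$ with $y$-dependent coefficients, bound each $f_{i,k}$ using the stationary-phase calculation of Lemma \ref{SP_lem} together with the coefficient estimate of Lemma \ref{P_k_beta}, and then control the dyadic remainders $\mathcal{R}_k$ via the multilinear product bounds \eqref{SN_mult1}--\eqref{SN_mult3} and \eqref{f_k_product}.

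First I would substitute the ansatz \eqref{cubic_NF_ansatz} into $(\Box_\mathcal{H}+1)\mathcal{N}_k$, expand using the Leibniz rule, and repeatedly apply the identity $\ddot u_{<k} = -u_{<k} + \rho^{-2}\partial_y^2 u_{<k} - \tfrac{1}{4}\rho^{-2}u_{<k} + P_{<k}(\rho^{-1}\beta_1 u^3) + F_{<k}$ (obtained from frequency-localizing \eqref{cubic_part}) to eliminate every second $\rho$-derivative of $u_{<k}$ and $\dot u_{<k}$. The leading-order contribution then organises as a degree-three polynomial in $u_{<k}, \dot u_{<k}$ whose four coefficients depend linearly on $(f_{i,k}, \dot f_{i,k}, \ddot f_{i,k})$. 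The residual terms fall into three classes: (a) products of $f_{i,k}$ with the inhomogeneous replacement $P_{<k}(\rho^{-1}\beta_1 u^3) + F_{<k}$, (b) cross terms $\rho^{-1}\dot f_{i,k}\cdot \partial_\rho(\text{cubic monomial})$ and $[\partial_\rho, P_{<k}]$ commutators, and (c) the $\rho^{-2}\partial_y^2$ action on $f_{i,k}$ together with lower-order $\rho^{-2}$ pieces of $\Box_\mathcal{H}$.

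Next I would match the coefficient of $(u_{<k})^3$ with $\rho^{-1}(\beta_1)_k$ and set the other three monomial coefficients to zero. Diagonalising the resulting $4\times 4$ linear ODE system via the plane-wave basis $E_\pm = \tfrac{1}{2}(u_{<k}\mp i\dot u_{<k})$ decouples it into four scalar equations indexed by the frequencies $n\in\{\pm 1, \pm 3\}$ generated by the cubic expansion of $(E_+ + E_-)^3$. The $n=\pm 3$ modes are non-resonant at leading order and their inversion, after rescaling to $(t,x)$ coordinates, takes exactly the form of the integral in Lemma \ref{SP_lem}, which provides the required uniform $L^\infty$ bound inside the forward light cone together with rapid decay outside. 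The resonant $n=\pm 1$ modes are handled by observing that on $P_k\beta_1$ the full KG symbol $1 - n^2 + \rho^{-2}\eta^2$ is bounded below by $(2^k/\rho)^2$, and thus stays nontrivial precisely under the hypothesis $2^k \gtrsim \rho^{1/2}$; this is exactly why the statement imposes $f_{i,k}\equiv 0$ for $2^k\ll\rho^{1/2}$. The omitted low-$k$ contribution $\rho^{-1}(\beta_1)_k(u_{<k})^3$ is absorbed into $\mathcal{R}_k$, and the bound $\lp{(\beta_1)_k}{L^\infty_y}\lesssim 2^k/\rho$ from Lemma \ref{P_k_beta} shows this piece is $O(\rho^{-3/2}\mathcal{Q}_3(\lp{u}{S}))$ after dyadic summation. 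Tracking the $\rho$- and $y$-derivatives of the oscillatory-integral representation of $f_{i,k}$, using Lemma \ref{SP_lem} uniformly, yields $\llp{f_{i,k}}{S^{1/2}_2}\lesssim 1$ on each shell; summing over the $O(\log_2\rho)$ non-trivial shells in $\rho^{1/2}\lesssim 2^k \lesssim \rho$ (with rapid decay for $2^k\gg\rho$ by Lemma \ref{P_k_beta}) gives \eqref{fixed_time_dyad_bnds1}.

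Finally, each dyadic remainder $\mathcal{R}_k$ is estimated in $N(\rho)$ using the product inequality \eqref{f_k_product} and Lemma \ref{used_prod_lem}. Type (a) errors are controlled by $\lp{\dot u}{\dot S}\lesssim \mathcal{Q}_1(\lp{u}{S})+\lp{F}{N}$ combined with the second algebra bound in \eqref{alg2} and the $L^\infty_\rho(L^2_y)$ hypothesis on $\rho\dot F$ when $\partial_\rho$ lands on $F$; type (b) errors gain a factor $\rho^{-1}$ from the commutators and from $\dot f_{i,k}$; type (c) errors gain $\rho^{-2}\cdot 2^{2k}\lesssim 1$ on the dyadic support where the normal form is non-trivial. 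Each dyadic contribution carries an extra dyadic weight from Lemma \ref{P_k_beta} that makes the sum in $k$ convergent, producing the bound \eqref{fixed_time_dyad_bnds2}. The main obstacle I anticipate is the analysis of the near-resonant $n=\pm 1$ case: one must check that the separation $(2^k/\rho)^2$ gives $f_{i,k}$ satisfying the full $S^{1/2}_2$ norm (including the first-derivative bound of order $1$ and the second-derivative bound of order $\rho^{-1/2}$) uniformly as $2^k$ traverses the range $[\rho^{1/2},\rho]$, and that the time-dependent cutoff at $2^k\sim\rho^{1/2}$ does not produce boundary terms that spoil the $\log\rho$ bookkeeping in \eqref{fixed_time_dyad_bnds1}; a secondary difficulty is verifying that the uniform $O(1)$ output of Lemma \ref{SP_lem} (rather than a decaying one) remains compatible with the $\rho^{-\delta}$ gain needed to close \eqref{f_k_product} into the summable bound \eqref{fixed_time_dyad_bnds2}.
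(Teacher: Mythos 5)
Your overall strategy is essentially the paper's: substitute the ansatz \eqref{cubic_NF_ansatz}, reduce to a $4\times 4$ ODE system in $\rho$ with $\beta_{1,k}$ as source, diagonalize into time-frequency modes $n\in\{\pm1,\pm3\}$ (the paper does this via $F_1=3f_1+f_3$, $F_2=f_1-f_3$, $G_1=f_2+3f_4$, $G_2=f_2-f_4$, complexification, and the gauge transform $K_1=e^{i\rho}K_1^\dagger$, $K_2=e^{3i\rho}K_2^\dagger$), set $f_{i,k}\equiv 0$ for $2^k\ll\rho^{1/2}$ where elliptic division by $D_y^2$ would overshoot the $S^{1/2}_2$ budget, and push the residuals through \eqref{f_k_product} and Lemma \ref{used_prod_lem}. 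Your identification of the threshold $2^k\gtrsim\rho^{1/2}$ from $\rho 2^{-k}\lesssim\rho^{1/2}$ is exactly right, and so is the idea that the $n=\pm3$ output should be captured by the stationary phase Lemma \ref{SP_lem} after passing to $(t,x)$ coordinates.

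However, there is a genuine gap in your treatment of the $n=\pm3$ modes, and a related conceptual slip. The slip: you call $n=\pm3$ ``non-resonant at leading order,'' but these are precisely the modes carrying the genuine spacetime resonance. For them the semiclassical symbol is $D_y^2-8$, which vanishes at $\eta=\pm\sqrt{8}$; since $\eta\approx 2^k/\rho$ on $P_k$, the symbol degenerates exactly in the dyadic range $2^k\approx\rho$. (For $n=\pm1$ the symbol $D_y^2$ degenerates only at $\eta=0$, i.e.\ in the limit $2^k/\rho\to 0$ — this is the ``$0$ resonance,'' cured by the $2^k\gtrsim\rho^{1/2}$ cutoff you correctly identify.) The gap this produces: Lemma \ref{SP_lem} requires the source to be cut off by an $h_\lambda(s)$ supported on a single dyadic time shell $s\approx\lambda$, so you cannot apply it to the global Duhamel integral for $K_2$. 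You need an explicit decomposition of the time axis relative to $2^k$ — in the paper's notation $1=h_k^{smooth}+h_k^{elliptic}+h_k^{dispersive}+h_k^{high}$ — and then invert $(D_y^2-8)^{-1}$ directly on the elliptic/high pieces (where $2^k\not\approx\rho$, so $|D_y^2-8|\gtrsim 1$, giving the bound \eqref{K2k_nondisp_est}), reserving the stationary phase argument only for the dispersive shell $\rho\approx 2^k$. Without this split, the $S^{1/2}_N$ bound \eqref{reduced_symbol_bnds2} for $K_{2,k}$ with the summable weight $2^{-|k-\ln_2\rho|}$ is not available, and \eqref{fixed_time_dyad_bnds1} would fail. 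A secondary omission worth noting: the paper also needs to separate ``smooth'' remainders (controlled in $H^1_y$, estimate \eqref{reduced_symbol_bnds1}) from ``small'' ones (controlled in $S^{1/2}_N$, estimates \eqref{reduced_symbol_bnds2}--\eqref{reduced_symbol_bnds3}), because the $f_{i,k}$ budget is exhausted at $2^k\sim\rho^{1/2}$ while the source $\beta_{1,k}$ persists to lower frequencies; your absorption of the low-$k$ source into $\mathcal{R}_k$ is the right move but needs the $H^1_y$ coefficient bound of \eqref{reduced_symbol_bnds1} (rather than an $L^\infty$ bound alone) to close the Leibniz terms where $\partial_y$ falls on $u_{<k}$ in the $N(\rho)$ estimate.
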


We'll postpone the demonstration of this last result until the next two subsections. 
For now we use it to conclude:

\begin{proof}[Proof that Lemma \ref{symbol_lem} and Proposition \ref{cubic_nf_thm_red} 
implies Theorem \ref{cubic_nf_thm}]
First define $\mathcal{N}_{cubic}=\sum_k \mathcal{N}_k$ and then set
$\mathcal{R}_{cubic}=\rho^{-1}\beta_1u^3 - \sum_k \rho^{-1}(\beta_1)_k (u_{<k})^3
+\sum_k\mathcal{R}_k$. Thanks to \eqref{f_k_product} and \eqref{fixed_time_dyad_bnds1}
we have the bound for $\mathcal{N}_{cubic}$ on line \eqref{main_cubic_nf_ests}.
On the other hand, to prove  \eqref{main_cubic_nf_ests} for $\mathcal{R}_{cubic}$
we see that estimate \eqref{fixed_time_dyad_bnds2} reduces our task to showing at fixed time:
\begin{equation}
		\lp{\rho^{-1}\big( \beta_1 u^3-
		\sum_{k\geqslant 0 } (\beta_1)_k (u_{<k})^3 \big)}{N(\rho)} \ 
		\lesssim \  \lp{u(\rho)}{S(\rho)}^3 \ . \notag
\end{equation}
For the $L^2_y\cap L^\infty_y$ part of the $N(\rho)$ norm 
it suffices to treat the two terms separately, and one can sum using the $L^\infty_y$
version of the coefficient bound \eqref{P_k_beta}.

For the $H^1_y$ portion of the  norm, expanding the dyadic sum it suffices to show
the fixed frequency bound:
\begin{equation}
		\lp{(\beta_1)_k\big( u^3 -(u_{<k})^3\big) }{H^1_y} \ \lesssim \ 2^{-|k-\ln_2(\rho)|}
		\lp{u}{H^1_y}\lp{u}{L^\infty_y}^2 \ . \notag
\end{equation}
This follows at once by writing $u^3 -(u_{<k})^3=(u_{\geqslant k})^3
+3(u_{\geqslant k})^2 u_{<k}+3 u_{\geqslant k}( u_{<k})^2$, and then using
the Leibniz rule and again  \eqref{P_k_beta}. If the derivative falls on $P_k \beta_{1}$
we can move it to a high frequency factor via the simple estimate
$2^k\lp{u_{\geqslant k}}{L^2_y}\lesssim \lp{u}{H^1_y}$.
\end{proof}

The remainder of this section is devoted to the proof of Proposition \ref{cubic_nf_thm_red}.

%-------------------------------------------------------------------------

\subsection{Construction of the Symbols}

We now do an algebraic calculation of the equations satisfied by the coefficients
$f_{i,k}$ in the expansion \eqref{cubic_NF_ansatz}. For starters we drop the index $k$
and look for  asymptotic solutions to the equation:
\begin{equation}
		(\Box_\mathcal{H}+1)\mathcal{N} \ = \ \rho^{-1} \beta_1 u^3 \ . \label{nf_to_solve}
\end{equation}
In deciding what terms are principle, we employ the following simple rules:
\begin{itemize}
        \item Any term containing a product involving   $D_y u$ is an error.
        \item Any term with a factor of $\rho^{-2}$ is an error.
        \item Any derivative of an error remains an error.
\end{itemize}
Then lengthy but straight forward computation shows:
\begin{multline}
        (\Box_\mathcal{H} + 1)\mathcal{N} \ = \
        \frac{1}{\rho} \Big[ \, \big( \Box_\mathcal{H} f_1 - 2f_1 -2\partial_\rho
        f_2 + 2f_3 \big)\cdot u^3 
        +
        \big( \Box_\mathcal{H} f_2 - 6f_2 + 6\partial_\rho f_1
        -4\partial_\rho f_3 + 6 f_4\big)\cdot \dot{u}\, u^2\\
        +  \big( \Box_\mathcal{H} f_3 - 6f_3 + 6f_1 +
        4\partial_\rho f_2 - 6\partial_\rho f_4 \big)\cdot \dot{u}^2 u
        +  \big(\Box_\mathcal{H} f_4 - 2f_4 + 2 f_2 +
        2\partial_\rho f_3 \big)\cdot \dot{u}^3\Big]
        + \mathcal{R} \ ,
        \label{main_cubic_comp}
\end{multline}
where the remainder term can be written as
$\mathcal{R}= -2\rho^{-1} \partial_\rho \mathcal{N} +
\mathcal{R}_1+\mathcal{R}_2 + \mathcal{R}_3 + \mathcal{R}_4$
with:
\begin{align}
        \mathcal{R}_1 \! &= \! \frac{1}{\rho}
        \Big[ 3f_1 u^2  \!
        + \! 5f_3 \dot{u}^2  \!
        - \! 4 f_3 u^2  
        \! + \! 6f_2 u\dot{u}   
%        \!+\! 3f_4 \dot{u}^2 
        \!-\! 12 f_4 u\dot{u}  %\!+\! 6 f_4 \dot{u} 
        \!+\! 4\partial_\rho f_3 u\dot{u}  \!+\! 2\partial_\rho
        f_2 u^2  + 6 \partial_\rho f_4 \dot{u}^2  
        \Big] (\ddot{u}+u) \ , \label{cubic_error_1}\\
        \mathcal{R}_2 \! &= \! 
        \frac{1}{\rho}  (2f_3 u + 6f_4 \dot u)(\ddot{u}+u)^2
        + \frac{1}{\rho}
        \Big[   2f_3 u\dot{u}
         + f_2 u^2  
        + 3f_4 \dot{u}^2  
        \Big] \partial_\rho (\ddot{u}+u) 
        \ , \label{cubic_error_2}\\
         \mathcal{R}_3 \! &= \!  
         2 \frac{1}{\rho} \Big[ D_y f_1 D_y( u^3) +
         D_y f_2 D_y (u^2\dot{u})
        +  D_y f_3 D_y(u\dot{u}^2)
         +  D_y f_4 D_y (\dot{u}^3)\Big] \ , \label{cubic_error_3} \\
         \mathcal{R}_4 \! &= \!
          \frac{1}{\rho}\Big[  f_1 D_y^2( u^3) +
         f_2 D_y^2 (u^2\dot{u})
        +   f_3 D_y^2(u\dot{u}^2)
         +  f_4 D_y^2 (\dot{u}^3)\Big]
        \ . \label{cubic_error_4}
\end{align}
Setting the principle terms on the RHS of \eqref{main_cubic_comp} so as to 
match the RHS of \eqref{nf_to_solve} we have the system of equations:
\begin{align}
        &E_1:
        &\Box_\mathcal{H} f_1 - 2f_1 -2\partial_\rho f_2 + 2f_3 \ &= \
        \beta_1 \ , \notag\\
        &E_2:
        &\Box_\mathcal{H} f_2 - 6f_2 + 6\partial_\rho f_1
        -4\partial_\rho f_3 + 6 f_4 \ &= \ 0 \ , \notag\\
        &E_3:
        &\Box_\mathcal{H} f_3 - 6f_3 + 6f_1 +
        4\partial_\rho f_2 - 6\partial_\rho f_4 \ &= \ 0 \ , \notag\\
        &E_4:
        &\Box_\mathcal{H} f_4 - 2f_4 + 2 f_2 + 2\partial_\rho f_3 \ &= \
        0 \ . \notag
\end{align}
To uncover the underlying structure we form the new quantities:
\begin{align}
        F_1 \ &= \ 3f_1 + f_3 \ ,
    &F_2 \ &= \ f_1 - f_3 \ , \notag\\
    G_1 \ &= \ f_2 + 3f_4 \ ,
    &G_2 \ &= \ f_2 - f_4 \ . \notag
\end{align}
and then we take the associated linear combinations of the equations
$E_i$ above which yields the system:
\begin{align}
        &3E_1 + E_3:
        &\Box_\mathcal{H} F_1 -2\partial_\rho G_1 \ &= \ 3\beta_1  \ , \notag\\
        &E_2 + 3E_4:
        &\Box_\mathcal{H} G_1 + 2\partial_\rho F_1 \ &= \ 0 \ ,
        \notag\\
        &E_1 - E_3:
        &\Box_\mathcal{H} F_2 - 8F_2 - 6\partial_\rho G_2 \ &= \ \beta_1 \ , \notag\\
        &E_2 - E_4:
        &\Box_\mathcal{H} G_2 - 8G_2 + 6\partial_\rho F_2 \ &= \ 0 \ . \notag
\end{align}
We now complexify this system by introducing the quantities:
\begin{align}
        {K}^\dagger_1 \ &= \ F_1 + \sqrt{-1} G_1 \ ,
    &{K}^\dagger_2 \ &= \ F_2 + \sqrt{-1} G_2 \ . \notag
\end{align}
This allows us to write the last system of equations succinctly as:
\begin{align}
        \Box_\mathcal{H} {K}^\dagger_1 + 2i\partial_\rho {K}^\dagger_1 \ &= \ 3\beta_1
        \ , \notag\\
        \Box_\mathcal{H} {K}^\dagger_2 - 8{K}^\dagger_2 + 6i\partial_\rho {K}^\dagger_2 \ &= \
        \beta_1 \ . \notag
\end{align}
Finally, we introduce the gauge-transformed quantities:
\begin{align}
        K_1 \ &= \ e^{i\rho}{K}_1^\dagger \ ,
    &K_2 \ &= \ e^{3i\rho}{K}_2^\dagger \ , \notag
\end{align}
which allow us to rewrite the previous system of two complex
equations as follows:
\begin{align}
        (\Box_\mathcal{H} + 1) K_1 \ &= \   3e^{i\rho}\beta_1 \ ,
        &(\hbox{$0$ resonance equation})
        \ , \label{0_res_eq}\\
        (\Box_\mathcal{H} + 1) K_2 \ &= \   e^{3i\rho}\beta_1 \
        ,
        &(\hbox{$\pm\sqrt{8}$ resonance equation})
        \ . \label{sqrt8_res_eq}
\end{align}
%{\color{red}NEED TO GO BACK AND CHECK ALL THE ALGEBRA HERE.}

These equations only need to be solved asymptotically, so our
main result for symbols becomes:

\begin{prop}
For each $k> 0$ there exists symbols $K_{i,k}$ and errors $\mathcal{E}_{i,k}$
such that:
\begin{equation}
		 (\Box_\mathcal{H} + 1) K_{1,k} -  3e^{i\rho}\beta_{1,k} \ = \ 
		 \mathcal{E}_{1,k} \ , \qquad
		 (\Box_\mathcal{H} + 1) K_{2,k} -   e^{3i\rho}\beta_{1,k} \ = \ 
		 \mathcal{E}_{2,k} \ . \label{red_sym_eqs}
\end{equation}
Furthermore each error can be broken up into a sum 
$\mathcal{E}_{i,k}=\mathcal{E}_{i,k}^{small}+\mathcal{E}_{i,k}^{smooth}$ and one has:
\begin{align}
		 \lp{  (e^{-i\rho} \mathcal{E}_{1,k}^{smooth}, e^{-3i\rho}\mathcal{E}_{2,k}^{smooth}
		 )}{H^1_y} 
		 \ &\lesssim \ 2^{-\frac{1}{2} |k-\ln_2(\rho)|} \ , \label{reduced_symbol_bnds1}\\
		 \llp{  ( e^{-3i\rho}K_{2,k}  ,
		 \rho e^{-3i\rho}\mathcal{E}_{2,k}^{small}
		 )}{S^\frac{1}{2}_N} 
		 \ &\lesssim \ 2^{-|k-\ln_2(\rho)|}  \ . \label{reduced_symbol_bnds2}
\end{align}
In addition  for the zero resonance symbol we have:
\begin{equation}
		\lp{( \partial_\rho^n D_y^m e^{-i\rho} K_{1,k},
		\rho \partial_\rho^n D_y^m e^{-i\rho} \mathcal{E}_{1,k}^{small}) }{L^\infty_y} 
		\! \lesssim \!
		\rho^{(\frac{1-n-m}{2})_+}\!
		\begin{cases}
				2^{-|k-\frac{1}{2}\ln_2(\rho)|} \!+\! 
		 		2^{-|k-\ln_2(\rho)|} \ ,
				&(n,m)\neq(0,1);\\
				2^{-(k-\ln_2(\rho))_+} \ , &(n,m)=(0,1).
		\end{cases}\label{reduced_symbol_bnds3}
\end{equation}
In particular for the original $f_{i,k}$  
one has   $\llp{f_{i,k}}{S^\frac{1}{2}_2}\lesssim 2^{-(k-\ln_2(\rho))_+}$
which implies   estimate 
\eqref{fixed_time_dyad_bnds1}.
\end{prop}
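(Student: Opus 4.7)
My plan is to construct each $K_{i,k}$ as the Duhamel solution of the corresponding linear Klein--Gordon equation with oscillatory localized source, after passing to the rescaling $x=\rho y$ in which $\Box_{\mathcal{H}}+1$ becomes $\partial_\rho^2-\partial_x^2+1$ modulo the manifestly small term $\tfrac{1}{4}\rho^{-2}$. In these coordinates $\beta_{1,k}$ is a function whose $x$--frequency is localized at scale $2^{k-\ln_2\rho}$, and Fourier inversion presents the solution as an oscillatory integral of exactly the form handled by Lemma \ref{SP_lem}. Free--evolution data will be chosen so that $K_{i,k}$ vanishes at $\rho=1$, and the homogeneous piece so produced contributes only to the smooth part of $\mathcal{E}_{i,k}$.

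For the $\pm\sqrt{8}$ resonance equation \eqref{sqrt8_res_eq} the phase is exactly the one analyzed in Lemma \ref{SP_lem}, with non--degenerate stationary points at $\xi_\pm=\pm\sqrt{8}$, $s_\pm=\rho\pm\tfrac{3}{\sqrt{8}}x$. Direct application of \eqref{main_SP_est} yields the uniform bound on $K_{2,k}$; the dyadic factor $2^{-|k-\ln_2\rho|}$ in \eqref{reduced_symbol_bnds2} comes from evaluating the Fourier transform of $\beta_{1,k}$ at the stationary frequency $\pm\sqrt{8}$ via the coefficient estimate \eqref{P_k_beta}, which is sharp at $2^k\approx\rho$. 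Derivatives $\partial_\rho^n D_y^m$ introduce either boundary terms at $s=\rho$ (which decay rapidly by Schwartz bounds on $\widehat{\beta_1}$) or amplitude modifications still of the form admissible in \eqref{main_SP_est}, so the same stationary phase applies.

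The main obstacle is the $0$--resonance equation \eqref{0_res_eq}, whose phase $\pm(\rho-s)\langle\xi\rangle+s+x\xi$ has its $s$--stationary locus only at $\xi=0$, where $\partial_\xi\langle\xi\rangle$ also vanishes, so the $(s,\xi)$ Hessian is degenerate and a two--variable stationary phase is unavailable. I would instead integrate by parts once in $s$ to cancel the $e^{is}$ oscillation away from $\xi=0$, reducing matters to a one--dimensional oscillatory integral in $\xi$ with phase $(\rho-s)\langle\xi\rangle+x\xi$. Near $\xi=0$ one uses the Taylor expansion $\langle\xi\rangle\approx 1+\tfrac{1}{2}\xi^2$ to produce a Fresnel integral of size $(\rho-s)^{-\frac{1}{2}}$; integration in $s$ then gives size $\rho^{\frac{1}{2}}$ and identifies the critical frequency scale as $\xi\sim\rho^{-\frac{1}{2}}$, responsible for the weight $2^{-|k-\frac{1}{2}\ln_2\rho|}$ in \eqref{reduced_symbol_bnds3}. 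A cutoff $\chi(\rho^{1/2}\xi)$ isolates this contribution; on the complementary region the $s$--integration by parts is non--resonant and yields the competing $2^{-|k-\ln_2\rho|}$ piece. The improved exponent $2^{-(k-\ln_2\rho)_+}$ in the $(n,m)=(0,1)$ case comes because $D_y$ inserts a factor $\xi$ which vanishes to first order at $\xi=0$, converting one power of $\rho^{\frac{1}{2}}$ into an equivalent gain and eliminating the Fresnel peak.

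Finally, I would split $\mathcal{E}_{i,k}=\mathcal{E}_{i,k}^{small}+\mathcal{E}_{i,k}^{smooth}$ by placing into the first the $L^\infty$--bounded commutator errors from replacing $\Box_{\mathcal{H}}+1$ with $\partial_\rho^2-\partial_x^2+1$ (namely $\tfrac{1}{4}\rho^{-2}K_{i,k}$ together with the rescaling commutators of $\partial_\rho^2$ with the $x=\rho y$ substitution), all of which inherit the pointwise decay of $K_{i,k}$ with at least one additional power of $\rho^{-1}$, and into the second the boundary contribution from the free evolution at $\rho=1$, whose $H^1_y$ norm is controlled by a direct energy estimate against the localized source and inherits the $2^{-\frac{1}{2}|k-\ln_2\rho|}$ bound from \eqref{P_k_beta}. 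The passage back from $K_{1,k},K_{2,k}$ to $f_{i,k}$ is the invertible constant--coefficient linear change of variables introduced just before \eqref{0_res_eq}, so all estimates transfer verbatim, and combining \eqref{reduced_symbol_bnds2} with \eqref{reduced_symbol_bnds3} produces the claimed $\llp{f_{i,k}}{S^{\frac{1}{2}}_2}\lesssim 2^{-(k-\ln_2\rho)_+}$ and hence \eqref{fixed_time_dyad_bnds1} after dyadic summation.
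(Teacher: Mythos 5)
Your plan diverges from the paper's construction in a way that creates real difficulties. The paper does \emph{not} build $K_{1,k}$ (and most of $K_{2,k}$) by Duhamel at all; the key organizing device is a time-scale partition of unity $1 = h_k^{smooth}+h_k^{elliptic}+h_k^{dispersive}+h_k^{high}$ adapted to the relation between $\rho$ and $2^k$. For the $0$-resonance symbol the whole construction is the \emph{elliptic} inversion $K_{1,k}=3e^{i\rho}(1-h_k^{smooth})D_y^{-2}\beta_{1,k}$: since $\beta_{1,k}$ is cut to $D_y$-frequency $\sim 2^k/\rho$ away from zero, $D_y^{-2}$ is a bounded smooth multiplier, and all of \eqref{reduced_symbol_bnds3} drops out of the coefficient bound \eqref{P_k_beta} applied to $D_y^mD_y^{-2}\beta_{1,k}$; no stationary phase is involved. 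Likewise $K_{2,k}$ is handled by $(D_y^2-8)^{-1}$ on the high and elliptic time ranges, and the retarded-propagator/stationary-phase computation from Lemma \ref{SP_lem} enters \emph{only} on the dispersive scale $\rho\approx 2^k$ where the multiplier $(D_y^2-8)^{-1}$ is singular.

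This matters for your argument because your treatment of the $0$-resonance is exactly where the paper avoids oscillatory-integral technology. Your proposed integration by parts in $s$ against $\psi_s=1-\langle\xi\rangle\approx -\tfrac12\xi^2$ is in fact the elliptic inversion in disguise (it introduces the factor $\xi^{-2}$), so the Fresnel analysis near $\xi=0$ is doing redundant and dangerous work: a two-variable stationary phase is, as you note, degenerate there, and your one-variable Fresnel estimate of size $(\rho-s)^{-1/2}$ ignores the remaining $e^{2is}$ oscillation, so the $s$-integration does not simply give $\rho^{1/2}$. More seriously, the derivative cases $(n,m)\neq(0,0)$ and the special role of $(n,m)=(0,1)$ in \eqref{reduced_symbol_bnds3} are not cleanly recoverable from this picture, whereas they follow immediately from \eqref{P_k_beta} and the explicit formula $D_y^mD_y^{-2}\beta_{1,k}$.

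There is also a concrete gap in your decomposition $\mathcal{E}_{i,k}=\mathcal{E}_{i,k}^{small}+\mathcal{E}_{i,k}^{smooth}$. You attribute $\mathcal{E}^{smooth}$ to "the boundary contribution from the free evolution at $\rho=1$," but with zero Duhamel data at $\rho=1$ there is no homogeneous piece, and in any case a free solution is annihilated by $\Box_\mathcal{H}+1$ so contributes nothing to the error. In the paper the "smooth" error is nothing more than the untreated source cut off to late times, $\mathcal{E}_{i,k}^{smooth}=-c_ie^{\cdot i\rho}h_k^{smooth}\beta_{1,k}$ (with $h_k^{smooth}$ supported on $\rho>2^{2k}$, i.e.\ $2^k<\rho^{1/2}$), which explains both the $H^1_y$ control in \eqref{reduced_symbol_bnds1} via \eqref{P_k_beta} and the requirement $f_{i,k}\equiv 0$ for $2^k\ll\rho^{1/2}$ in Proposition \ref{cubic_nf_thm_red}. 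Your approach would need to be substantially reorganized around the $h_k^\bullet$ time-scale decomposition and the elliptic inversions to reach the stated estimates.
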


\begin{proof}
We break the proof up into a series of steps.
 
\step{1}{Time scale decomposition and definition of $K_{i,k}$ and  $\mathcal{E}_{i,k}$}
The way we solve the equations on line 
\eqref{red_sym_eqs} is to use different information at different times. For
a fixed value of $k$ we define the smooth partition of unity in time:
\begin{equation}
		1 \ = \ h_k^{smooth} + h_k^{elliptic} + h_k^{dispersive} + h_k^{high} \ , \notag
\end{equation}
where  the supports are defined as:
\begin{align}
		\hbox{supp}(h_k^{high}) \ &= \ \{\rho \ll 2^k \} \ , 
		&\hbox{supp}(h_k^{dispersive}) \ &= \ \{\rho \approx 2^k \} \ , \notag\\
		\hbox{supp}(h_k^{elliptic}) \ &= \ \{ 2^k \ll \rho \leqslant 2^{2k} \} \ ,
		&\hbox{supp}(h_k^{smooth}) \ &= \ \{ \rho > 2^{2k} \} \ . \notag
\end{align}
We may assume these functions are chosen so that $|\partial_\rho^n h_k^\bullet|
\lesssim \rho^{-n}$. Then define:
\begin{align}
		\mathcal{E}_{1,k}^{smooth} \ &=\  -3e^{i\rho}h_k^{smooth}\beta_{1,k} \ , 
		&\mathcal{E}_{2,k}^{smooth} \ &= \ -e^{3i\rho} h_k^{smooth}\beta_{1,k} 
		\ , \label{smooth_rem_line}\\
		{K}_{1,k} \ &= \ 3e^{i\rho}(1-h_k^{smooth})D_y^{-2}\beta_{1,k} \ ,  
		&{K}_{2,k}^{nondisp} \ &= \ e^{3i\rho}(h_k^{elliptic} + h_k^{high})
		(D_y^2-8)^{-1}\beta_{1,k}
		\ .  \label{nondisp_line}
\end{align}
Here we define the operators $D_y^{-2}$ and $(D_y^{2}-8)^{-1}$ in terms of Fourier space
division by their symbols which is a smooth operation because we have cut away from the 
$0$ and $\pm \rho\sqrt{8}$ frequencies of $\beta_1$. 

We will set $K_{2,k}={K}_{2,k}^{nondisp}+{K}_{2,k}^{disp}$ where the first term on the RHS is defined 
above and the second will be produced by asymptotically solving:
\begin{equation}
		 (\Box_\mathcal{H} + 1) K^{disp}_{2} \approx   e^{3i\rho}
		 {h}_k^{disp} P_{|k-\ln_2(\rho)|<C}
		 \beta_{1} \ , 
		 \qquad \hbox{and then setting \ \ } {K}_{2,k}^{disp}=
		 \td{h}_k^{disp} P_k  K_{2}^{disp} \ .
		 \label{disp_sym_eq}
\end{equation}
The equation on the LHS above will be solved 
by reverting back to the original $(t,x)$ coordinates, and the   needed estimates
will then become a standard stationary phase calculation. Explicit details will be given shortly.
For now we simply note that constant $C>0$  in the above formula is chosen sufficiently large so that
${h}_k^{disp} P_k P_{|k-\ln_2(\rho)|<C}={h}_k^{disp} P_k $, and $\td{h}_k^{disp}$
is a another cutoff on $\rho\approx 2^k$ such that $\td{h}_k^{disp} {h}_k^{disp}={h}_k^{disp}$.

Finally, the ``small'' remainders are  defined by:
\begin{equation}
		\mathcal{E}_{1,k}^{small} \ = \  (\Box_\mathcal{H} + 1) K_{1,k} -  3e^{i\rho}
		(1-h_k^{smooth})\beta_{1,k} \ , \quad
		\mathcal{E}_{2,k}^{small} \ = \  (\Box_\mathcal{H} + 1) K_{2,k} -  e^{3i\rho}
		(1-h_k^{smooth})\beta_{1,k} \ . \label{small_rem}
\end{equation}
In particular this gives the formulas on line \eqref{red_sym_eqs}.
We now turn to estimating the quantities defined above.

\step{2}{Estimate \eqref{reduced_symbol_bnds1} for $\mathcal{E}_{i,k}^{smooth}$}
We need to show estimate \eqref{reduced_symbol_bnds1} for 
the quantities on line \eqref{smooth_rem_line}. This follows at once from \eqref{P_k_beta}
with $p=2$ and $m=0,1$, and the restriction $2^k\lesssim \rho^\frac{1}{2}$.

\step{3}{Estimate \eqref{reduced_symbol_bnds3} for  $K_{1,k}$}
For any of the time dependent frequency cutoffs we may safely ignore the
commutators  $[\partial_\rho^n, h_k^\bullet]$ as they produce inverse powers of
$\rho^{-n}$ and also restrict the relation between $\rho$ and $2^k$ to a finite number of dyadic 
frequencies at a  given time.  Concentrating on $D_y^{-2}\beta_{1,k}$ we 
directly have from \eqref{P_k_beta} the following estimate for $2^k\geqslant \rho^\frac{1}{2}$:
\begin{equation}
		\lp{\partial_\rho^n D_y^m D_y^{-2} \beta_{1,k}}{L^\infty_y} \ \lesssim \ 
		\rho^{-n-m+1}  2^{(m-1)k}2^{-N(k-\ln_2(\rho))_+} \ \lesssim \ \ \rho^{-n}
		\begin{cases}
				 \rho^{\frac{1}{2}}2^{-|k-\frac{1}{2}\ln_2(\rho)|} \ , &m=0;\\
				 2^{-(k-\ln_2(\rho))_+} \ , &m=1 ;\\
				 2^{-|k-\ln_2(\rho)|}	\ , &m\geqslant 2 . 
		\end{cases}   \label{split_K1k_est2}
\end{equation}
Note that there is no dyadic gain for $2^k\leqslant \rho$ in the 
middle case above. Together these  imply 
\eqref{reduced_symbol_bnds3} for $e^{-i\rho} K_{1,k}$.

\step{4}{Estimate \eqref{reduced_symbol_bnds2} for $K_{2,k}$}
We'll do this separately for $K_{2,k}^{nondisp}$ and $K_{2,k}^{disp}$.

\step{4a}{Contribution of ``elliptic'' and ``high'' time scales}
Here we consider the quantity $(D_y^2-8)^{-1}\beta_{1,k}$, where the frequencies
are restricted to the range $2^k\gg \rho$ or $2^k\ll\rho$. With this restriction division by $(D_y^2-8)^{-1}$
is a bounded smooth multiplier and so $(D_y^2-8)^{-1}\beta_{1,k}=\td{\beta}_{1,k}$
for some new $\td{\beta}_{1}\in\mathcal{S}$. Then by \eqref{P_k_beta} we get:
\begin{equation}
		\lp{\partial_\rho^n D_y^m  (D_y^2-8)^{-1} \beta_{1,k}}{L^\infty_y} \ \lesssim \ 
		\rho^{-n-m-1}  2^{(m+1)k}2^{-N(k-\ln_2(\rho))_+} \ \lesssim \ \rho^{-n}2^{-|k-\ln_2(\rho)|} \ .
		 \label{K2k_nondisp_est}
\end{equation}

\step{4b}{Contribution of the ``dispersive'' time scale}
Set $\td{\beta}_1(x)=P_{|k|<C}\beta_1(x)$. Then define the function $K_2^{disp}$
according to the formula: 
\begin{equation}
		K_2^{disp}(\rho,y) \! = \! \rho^\frac{1}{2}
		\chi(y)  \td{K}_2^{disp}(\rho \cosh(y),\rho \sinh(y)) \ , 
		\quad \td{K}_2^{disp}(t,x) \! = \! \int_1^t
		\frac{\sin\big((t-s)\langle D_x\rangle\big)}{\langle D_x\rangle}
		( e^{3is}h_k^{disp}(s) s^{-\frac{1}{2}} \td{\beta}_1)ds \ . \notag
\end{equation}
Here $\chi(y)$ is a cutoff of sufficiently large support that $1-|x/t|\leqslant \epsilon$
on the support of $\chi'$ where $\epsilon$ is the same as on line \eqref{main_SP_est}.
With this definition we have $h_{2^k}(s):=\chi(y)
\rho^\frac{1}{2}h_k^{disp}(s) s^{-\frac{1}{2}}$
satisfies $|\partial_s^n h_{2^k}|\lesssim  2^{-nk}$ when $\rho\approx 2^k$.
Then 
from the stationary phase estimate \eqref{main_SP_est} we have the two bounds:
\begin{align}
		|\partial_\rho^nD_y^m  \td{h}_k^{disp}P_k K_2^{disp} | \ &\lesssim \  1 \ , 
		 \label{K2k_disp_remainder_est1}\\
		\lp{\partial_\rho^nD_y^m \td{h}_k^{disp}P_k \big( (\Box_\mathcal{H}+1) 
		K_2^{disp} -e^{3it}h_k^{disp}(t)\chi(y)\sqrt{\rho/t}\td{\beta}_1(x)
		\big)}{L^\infty_y}
		 \ &\lesssim \ \rho^{-N} \ . \label{K2k_disp_remainder_est2}
\end{align}
To see \eqref{K2k_disp_remainder_est2} notice that \eqref{box_conj}
gives $(\Box_\mathcal{H}+1) K_2^{disp} = [D_y^2,\chi]\rho^\frac{1}{2} \td{K}_2^{disp}+ 
e^{3it}h_k^{disp}(t)\chi(y)\sqrt{\rho/t}\td{\beta}_1(x)$, and by the support properties of 
$\chi'(y)$ we have that $[D_y^2,\chi]\rho^\frac{1}{2} \td{K}_2^{disp}$ is rapidly decaying thanks
to the second term on RHS \eqref{main_SP_est}.
Note also that \eqref{K2k_disp_remainder_est1} in particular  
 implies \eqref{reduced_symbol_bnds2}
for $K_{2,k}^{disp}=\td{h}_k^{disp}P_k K_2^{disp}$.

\step{5}{Estimate for the ``small'' remainders}
It remains to prove bounds \eqref{reduced_symbol_bnds2} 
and \eqref{reduced_symbol_bnds3} 
for the two quantities defined on line \eqref{small_rem}.
We'll do this separately for each term.

\step{5a}{Estimate \eqref{reduced_symbol_bnds3} for $\mathcal{E}_{1,k}^{small}$}
A quick calculation shows:
\begin{equation}
		e^{-i\rho }(\Box_\mathcal{H} + 1) K_{1,k} - 3 
		(1-h_k^{smooth})\beta_{1,k} \ = \ 
		(2i\partial_\rho + \partial_\rho^2 + \frac{1}{4}\rho^{-2})
		\big[ (1-h_k^{smooth})D_y^{-2}\beta_{1,k} \big] \ . \notag
\end{equation}
Then estimate \eqref{reduced_symbol_bnds3} for the RHS of this last line follows from 
\eqref{split_K1k_est2}.

\step{5b}{Estimate \eqref{reduced_symbol_bnds2} for $\mathcal{E}_{2,k}^{small}$}
Another quick calculation shows:
\begin{equation}
		e^{-3i\rho}(\Box_\mathcal{H}+1)K_{2,k} - (1-h_k^{smooth})\beta_{1,k}
		 \ = \ T_1 + T_2 + T_3 \ , \notag
\end{equation}
where:
\begin{align}
		T_1 \ &= \ (6i\partial_\rho + \partial_\rho^2 + \frac{1}{4}\rho^{-2})
		\big[(h_k^{elliptic} + h_k^{high})
		(D_y^2-8)^{-1}\beta_{1,k}\big] 
		\ , \notag\\ 
		T_2 \ &= \ e^{-3i\rho}P_k [\partial_\rho^2, \td{h}_k^{disp}]  K_2^{disp}
		+ e^{-3i\rho}\td{h}_k^{disp}P_k \big[ (\Box_\mathcal{H}+1) 
		K_2^{disp} -e^{3it}h_k^{disp}(t)\td{\chi}(y)\td{\beta}_1(x)
		\big]
		\ , \notag\\
		T_3 \ &=\   \td{h}_k^{disp} P_k\big[ e^{3i\rho(\cosh(y)-1)} h_k^{disp}(\rho\cosh(y))
		\td{\chi}(y)\td{\beta}_1(\rho\sinh(y)) - h_k^{disp}(\rho) P_{|k-\ln_2(\rho)|<C}\beta_1
		\big] \ , \notag
\end{align}
where we are using the shorthand $\td{\chi}(y)=\sqrt{\rho/t}\chi(y)=\sech^\frac{1}{2}(y)\chi(y)$.
Indeed, using the change of coordinates \eqref{coords} we see that the last term in $T_2$
cancels the first term in $T_3$, so by choosing cutoffs with the property $\td{h}_k^{disp}P_k 
h_k^{disp} P_{|k-\ln_2(\rho)|<C}= h_k^{disp} P_k$ we have
$T_2+T_3=e^{-3i\rho}(\Box_\mathcal{H}+1)K_{2,k}^{disp}-h_k^{disp}\beta_{1,k}$. On the other hand 
the identity $T_1=e^{-3i\rho}(\Box_\mathcal{H}+1)K_{2,k}^{nondisp}- (h_k^{elliptic} + h_k^{high})\beta_{1,k}$
follows from a simple direct calculation.

Estimate \eqref{reduced_symbol_bnds2}  for $T_1$ and $T_2$ (resp) follows directly
from \eqref{K2k_nondisp_est} and 
\eqref{K2k_disp_remainder_est1}--\eqref{K2k_disp_remainder_est2} (resp). To bound
the last term we may rewrite it as $T_3=T_{31}+T_{32}$ where:
\begin{align}
		T_{31} \ &=\   \td{h}_k^{disp} P_k  \big[ e^{3i\rho(\cosh(y)-1)} h_k^{disp}(\rho\cosh(y))
		\td{\chi}(y)\big( \td{\beta}_1(\rho\sinh(y)) - \td{\beta}_1(\rho y)\big) \big]\ , \notag\\
		T_{32} \ &=\   \td{h}_k^{disp} P_k\big[ 
		P_{|k-\ln_2(\rho)|<C}\beta_1(\rho y) \cdot \big(
		e^{3i\rho(\cosh(y)-1)} h_k^{disp}(\rho\cosh(y))
		\td{\chi}(y) - h_k^{disp}(\rho)
		\big)\big] \ . \notag
\end{align}
Notice that by definition of $\td{\beta}_1(x)$ given in \textbf{Step 4b} above
and rescaling we have $P_{|k-\ln_2(\rho)|<C}\beta_1(\rho y)=\td{\beta}_1(\rho y)$.
For the term $T_{31}$ we easily obtain estimate \eqref{reduced_symbol_bnds2} 
by using estimate \eqref{coeff_approx_est}. To handle  $T_{32}$   it suffices
to show for any $\varphi\in \mathcal{S}$:
\begin{align}
		\lp{\partial_\rho^n D_y^m \big[ (
		e^{3i\rho(\cosh(y)-1)}-1
		)h_{k}^{disp}(\rho\cosh(y))\varphi(\rho y)\td{\chi}(y) \big]}{L^\infty_y} \ &\lesssim \ \rho^{-1} \ , \notag\\
		\lp{\partial_\rho^n D_y^m \big[ (
		h_{k}^{disp}(\rho\cosh(y))\td{\chi}(y) - h_k^{disp}(\rho))
		\varphi(\rho y)\big]}{L^\infty_y} \ &\lesssim \ \rho^{-1} \ , \notag
\end{align} 
which in both cases follows  by inspection when 
$|y|\gtrsim \rho^{-\frac{1}{2}}$ and 
Taylor expansions for $|y|\ll \rho^{-\frac{1}{2}}$.
\end{proof}

%-------------------------------------------------------------------------

\subsection{Estimates for the Remainders}

Our final task is to prove estimate \eqref{fixed_time_dyad_bnds2}. To do this we write
line \eqref{main_cubic_comp} above in a frequency localized form
$(\Box_\mathcal{H}+1)\mathcal{N}_k  =  \mathcal{R}_k^{coef} +\mathcal{R}_k$ 
where:
\begin{equation}
		\mathcal{R}_k^{coef} \ = \ \frac{1}{\rho}\big[ \mathcal{F}_{1,k}u_{<k}^3
		+\mathcal{F}_{2,k} \dot u_{<k} u_{<k}^2
		+ \mathcal{F}_{3,k} (\dot u_{<k})^2 u_{<k}
		+ \mathcal{F}_{4,k} (\dot u_{<k})^3
		\big] \ , \notag
\end{equation}
with the $\mathcal{F}_{i,k}$ denoting the errors generated by applying the equations
for $f_{i}$ on RHS \eqref{main_cubic_comp} to the coefficients $f_{i,k}$ defined above,
and where $\mathcal{R}_k =   
-2\rho^{-1} \partial_\rho \mathcal{N}_k +
\mathcal{R}_{1,k}+\mathcal{R}_{2,k} + \mathcal{R}_{3,k} + \mathcal{R}_{4,k}$,
with the $\mathcal{R}_{i,k}$ denoting the expressions on lines 
\eqref{cubic_error_1}--\eqref{cubic_error_4} with $f_{i}$ replaced by $f_{i,k}$
and $u,\dot u$ replaced by $u_{<k},\dot u_{<k}$. Each term will be estimated 
separately.

\case{1}{Terms in $\mathcal{R}_k^{coef}$}
Recall that by the construction of the previous subsection the $\mathcal{F}_{i,k}$
are  complex linear combinations of the errors $e^{-i\rho}\mathcal{E}_{1,k}$ and
$e^{-i3\rho}\mathcal{E}_{2,k}$ which satisfy the estimates on lines 
\eqref{reduced_symbol_bnds1}--\eqref{reduced_symbol_bnds3}.
In the case of the ``smooth'' errors we have access to a uniform $H^1_y$
bound and \eqref{fixed_time_dyad_bnds2} then follows from the Leibniz rule 
and H\"older's inequality. In the case of the ``small errors''  we can use
$S_2^\frac{1}{2}$ coefficient estimates so that
\eqref{fixed_time_dyad_bnds2} 
follows from \eqref{f_k_product} and the embedding \eqref{easy_S_to_N}.

\case{2}{The term  $\rho^{-1} \partial_\rho \mathcal{N}_k$}
The estimate \eqref{fixed_time_dyad_bnds2} in this case follows from \eqref{easy_S_to_N}
and the fact that \eqref{fixed_time_dyad_bnds1} already implied the
$\sum_k \mathcal{N}_k \in S(\rho)$ estimate.

\case{3}{The terms $\mathcal{R}_{1,k}$} 
All of the terms here are schematically of the form:
\begin{equation}
		\mathcal{R}_{1,k}^{schem} \ = \ \rho^{-1}(f_k,\dot f_k)(u_{<k},\dot u_{<k})^2
		( -D_y^2 u_{<k} + G_{<k}) \ , \qquad
		\hbox{where \ } G_{<k}= -\frac{1}{4}\rho^{-2}u_{<k} +
		\rho^{-1}P_{<k}( \beta_1 u^3)  + F_{<k}
		\ . \notag
\end{equation}
We break this into two further subcases.

\case{3a}{The term involving $G_{<k}$}
Here we use the immediate estimate: 
\begin{equation}
		\lp{ G_{<k}  }{  L^2_y\cap L^\infty_y}+ \lp{ \dot{G}_{<k}  }
		{  L^2_y} \ \lesssim \ \rho^{-1}
		\big( \mathcal{Q}_1(\lp{u}{S(\rho)})
		+ \lp{F}{N(\rho)} + \lp{\rho \dot F}{L^2_y}\big)  \ . \label{ddotu+u_bnd}
\end{equation}
Then from \eqref{SN_mult3} we get:
\begin{equation}
		\lp{\rho^{-1}(f_k,\dot f_k)(u_{<k},\dot u_{<k})^2 G_{<k}}{ N(\rho)}
		\ \lesssim \ \rho^{-\delta}
		\llp{f_k}{S^\frac{1}{2}_2} \lp{u}{S(\rho)}^2 \lp{\rho G_{<k}}{L^2_y} \ , \notag
\end{equation}
which is sufficient to sum and produce \eqref{fixed_time_dyad_bnds2} for terms
of this form.

\case{3b}{The term involving $D_y^2 u_{<k}$}
It remains to bound the portion of $\mathcal{R}_{1,k}^{schem}$ which contains
the factor $D_y^2 u_{<k}$. In this case we have two derivatives to
put on the coefficients $f_k$ so by \eqref{SN_mult2} we have:
\begin{equation}
		\lp{ \rho^{-1}(f_k,\dot f_k)(u_{<k},\dot u_{<k})^2
		D_y^2 u_{<k} }{N(\rho)} \ \lesssim \ 
		\lp{D_y^2 ( f_k, \dot  f_k)}{L^\infty_y} \lp{u}{S(\rho)}^3 \ . \label{good_shem_est}
\end{equation}
By estimates \eqref{reduced_symbol_bnds2} 
and \eqref{reduced_symbol_bnds3} the RHS above can be summed over all $k> 0$.

\case{4}{The terms $\mathcal{R}_{2,k}$}
In this case the schematic form is:
\begin{equation}
		\mathcal{R}_{2,k}^{schem}  =  \rho^{-1} f_k( u_{<k}, \dot u_{<k})
		( -D_y^2 u_{<k} + G_{<k})^2 + \rho^{-1} f_k 
		(u,\dot u)^2\partial_\rho ( -D_y^2 u_{<k} + G_{<k})
		  \ , \notag
\end{equation}
where $G_{<k}$ is as above. Expanding the product in the first RHS term,
and using a combination of estimates \eqref{SN_mult2} and \eqref{SN_mult3} (possibly by setting
$H_{<k+C}=G_{<k}^2$ in the latter case)
we have:
\begin{equation}
		\slp{ \mathcal{R}_{2,k}^{schem}  }{N(\rho)} \! \lesssim \! 
		\rho^{-\delta}
		\llp{f_k}{S^\frac{1}{2}_3} \big(
		\slp{u}{S(\rho)}^2 \slp{\rho (G_{<k},\dot G_{<k})}{L^2_y}
		+ \slp{u}{S(\rho)} \slp{\rho G_{<k}}{L^2_y\cap L^\infty_y}^2\big)+
		\slp{(D_y^2  f_k, D_y^4 f_k)}{L^\infty} \slp{u}{S(\rho)}^3 . \notag
\end{equation}
Then using \eqref{ddotu+u_bnd} above for $G_{<k}$ we have 
 \eqref{fixed_time_dyad_bnds2} for these terms.

\case{5}{The terms $\mathcal{R}_{3,k}$ and $\mathcal{R}_{4,k}$}
In these cases the two schematic forms are:
\begin{equation}
		\mathcal{R}_{3,k}^{schem}  =  \rho^{-1}D_y (f_k,\dot f_k)D_y
		\big[ (u_{<k},\dot u_{<k})^3\big]
		\ , \qquad
		\mathcal{R}_{4,k}^{schem}  =  \rho^{-1}(f_k,\dot f_k)D_y^2
		\big[(u_{<k},\dot u_{<k})^3\big]
		 \ . \notag
\end{equation}
In both cases we can use estimate \eqref{SN_mult2} to produce an estimate of the form 
\eqref{good_shem_est} above.

%-------------------------------------------------------------------------
%%%%%%%%%%%%%%%%%%%%%%%%%%%%%% 
%-------------------------------------------------------------------------

%-------------------------------------------------------------------------
%%%%%%%%%%%%%%%%%%%%%%%%%%%%%%%%%%%%%%%%%%%%%%%%%%%%%%%%%%%%%%%%%%%%%%%%%%
%%%%%%%%%%%%%%%%%%%%%%%%%%%%%%%%%%%%%%%%%%%%%%%%%%%%%%%%%%%%%%%%%%%%%%%%%%
%-------------------------------------------------------------------------

\end{document}